\theoremstyle{plain}
\newtheorem{theorem}{Theorem}[section]
\newtheorem{lemma}[theorem]{Lemma}
\newtheorem{corollary}[theorem]{Corollary}
\theoremstyle{remark}
\newtheorem{Def}{Definition}[section]
\newtheorem{assumption}{Assumption}[section]
\newtheorem{remark}{Remark}[section]
\newcommand{\pr}[1]{\left(#1\right)}
\newcommand{\sqbr}[1]{\left[#1\right]}
\newcommand{\br}[1]{\left\{#1\right\}}
\newcommand{\verts}[1]{\left\lvert#1\right\rvert}
\newcommand{\Verts}[1]{\left\lVert#1\right\rVert}
\newcommand{\KL}[2]{\operatorname{KL}\pr{#1 \parallel #2}}
\DeclareMathOperator*{\argmax}{arg\,max}
\DeclareMathOperator*{\argmin}{arg\,min}
\newcommand{\Nb}{\mathbb{N}}
\newcommand{\Pb}{\mathbb{P}}
\def\ba#1\ea{\begin{align*}#1\end{align*}} 
\def\banum#1\eanum{\begin{align}#1\end{align}} 
\title{Nonparametric logistic regression with deep learning
}
\author{
  Atsutomo Yara \\
  Graduate School of Engineering Science, \\
  Osaka University \\
  \texttt{a-yara@sigmath.es.osaka-u.ac.jp} \\
   \And
  Yoshikazu Terada \\
  Graduate School of Engineering Science, \\
  Osaka University \\
  RIKEN Center for Advanced Intelligence Project (AIP) \\
  \texttt{terada.yoshikazu.es@osaka-u.ac.jp} \\
}
\begin{document}
\maketitle

\begin{abstract}
Consider the nonparametric logistic regression problem. 
In the logistic regression, we usually consider the maximum likelihood estimator, and the excess risk is the expectation of the Kullback-Leibler (KL) divergence between the true and estimated conditional class probabilities.
However, in the nonparametric logistic regression, 
the KL divergence could diverge easily, and thus, the convergence of the excess risk is difficult to prove or does not hold.
Several existing studies show the convergence of the KL divergence under strong assumptions. In most cases, our goal is to estimate the true conditional class probabilities. Thus, instead of analyzing the excess risk itself, 
it suffices to show the consistency of the maximum likelihood estimator 
in some suitable metric.
In this paper, using a simple unified approach for analyzing the nonparametric maximum likelihood estimator (NPMLE), 
we directly derive convergence rates of the NPMLE in the Hellinger distance under mild assumptions. 
Although our results are similar to the results in some existing studies, 
we provide simple and more direct proofs for these results. 
As an important application, we derive convergence rates of the NPMLE with fully connected deep neural networks and show that the derived rate nearly achieves the minimax optimal rate.
\end{abstract}

\keywords{classification \and conditional probability estimation \and deep neural networks \and nonparametric estimation}

\section{Introduction} \label{sec:intro}
Deep neural networks (DNNs) have attracted attention in recent years for their extremely high performance in various complex real-world tasks such as image recognition and object detection.
Moreover, DNN-based generative models have achieved impressive results for more complicated tasks such as natural language processing and image generation.
Although there are still many unexplored theoretical aspects, some recent studies show the theoretical advantages of DNN-based estimators in several aspects.
The theoretical study of nonparametric estimation, such as nonparametric regression and nonparametric classification using DNNs, is one of the areas that have drawn considerable notice.

In nonparametric regression problems, 
theoretical analysis of deep learning has recently been studied in various settings. 
Many studies theoretically have discovered the advantages of deep learning and analyzed why deep neural networks outperform other nonparametric estimators, such as kernel methods
(e.g., \cite{ImaizumiFukumizu19, Schmidt-Hieber20, suzuki19,  Hayakawa&Suzuki2020, EckleSchmidt-Hideber19}).
Moreover, some of these studies theoretically indicate the possibility that the curse of dimensionality can be avoided by using deep neural networks (e.g., \cite{suzuki19, Schmidt-Hieber20, Nakada&Imaizumi2020, Suzuki&Nitanda2021, BauerKohler19, Schmidt-Hieber19, ChenEtAl22}).

On the other hand, for the classification problems, most studies focus on the convergence of the excess risk for the 0-1 loss (i.e., the classification error).
However, since minimizing the empirical 0-1 loss is numerically challenging and impractical, the estimator is obtained by minimizing an empirical risk for some surrogate loss, such as the hinge loss and the logistic loss. Thus, the excess risk for the 0-1 loss is evaluated indirectly by evaluating the excess risk for such losses.
In recent years, convergence rates of classification errors for the classification problem with DNNs have been studied in various settings (e.g., \cite{HuEtAl20, KimEtAl21, Kohler2020, Kohler2022, FengEtAl21, ZhouHuo24, KoEtAl23}).
\cite{HuEtAl20} show that in binary classification, the DNN classifier that minimizes the hinge loss achieves the minimax optimal convergence rates of the classification error under some assumptions.
They assume that the difference between the conditional density given a positive label and the conditional density given a negative label can be represented by a deep neural network model.
It is also assumed that the probability of the input data being near the decision boundary is small.
\cite{KimEtAl21} show that under the Tsybakov noise condition (\citealp{Tsybakov04}), the DNN classifier that minimizes the hinge loss achieves the minimax optimal convergence rates up to logarithmic factors for the classification error in three cases: when the true model has (1) smooth decision bounds, has (2) smooth conditional class probabilities, and satisfies (3) margin conditions (\citealp{SteinwartChristmannl08}).
\cite{ZhouHuo24} consider the binary classification of unbounded data generated by Gaussian mixture models.
They derive convergence rates of the classification error for the DNN classifier that minimizes the hinge loss.
Since their convergence rates do not depend on the dimension of the input, 
they demonstrate that deep learning can overcome the curse of dimensionality in their settings.
\cite{KoEtAl23} also consider the binary classification problem.
They assume that (1) the Tsybakov noise condition and (2) the true conditional class probability locally belongs to the Barron approximation space.
Then, they derive convergence rates of the classification error for the DNN classifier that minimizes the logistic loss.
Furthermore, they show that the derived convergence rate achieves the minimax optimal rate up to a logarithmic factor.
In \cite{Kohler2020}, \cite{Kohler2022}, and \cite{FengEtAl21}, convergence rates for the classifiers using convolutional neural networks (CNNs) are studied.
\cite{Kohler2022} assume smoothness conditions and a max-pooling structure for the true conditional class probabilities and derive convergence rates of the classification error for the CNN classifier that minimizes the squared loss, while \cite{Kohler2020} derive convergence rates of the classification error for the CNN classifier that minimizes the logistic loss under similar conditions.
\cite{FengEtAl21} analyze the approximation ability of the specific CNN with respect to the \(L^p\) norm to approximate functions from the Sobolev space.
Then, they derive convergence rates of the classification error for the CNN classifier that minimizes the loss \(\phi(v) = ((1 - v) \lor 0)^p\).
They also drive convergence rates of the classification error under the Tsybakov noise condition.

The convergence of estimated conditional class probabilities to the true conditional class probabilities is also an important topic in the classification problem.
Usually, DNN classifiers output the conditional probability of belonging to each class by using a logistic sigmoid function or a softmax function in the output layer.
The estimation of conditional class probabilities is important in practice as it provides more information than simply predicting class memberships.
For instance, in artificial intelligence systems for disease diagnosis, it is better to show the confidence level of the candidate's diagnoses obtained from the system.
Moreover, even in modern causal inference, the nonparametric estimation of the propensity score (the conditional probability of treatment) plays an important role. 
For example, in the semiparametric problem,
the double machine learning approach (\citealp{ChernozhukovEtAl18}) requires a convergence rate \(o_p(n^{-1/4})\) for the nonparametric estimation of the nuisance parameters.
If the nuisance parameters are conditional class probabilities, 
we need to check such a requirement on the convergence rates of the estimators. 

In this paper, we consider the nonparametric estimation of conditional class probabilities 
in logistic regression.
More precisely, we analyze the theoretical properties of the nonparametric maximum likelihood estimator (NPMLE) for the conditional class probabilities.
To show the consistency of NPMLEs, we may simply consider the convergence of the excess risk for the logistic loss. 
In logistic regression, the excess risk is the expectation of the Kullback-Leibler (KL) divergence between the true and estimated conditional class probabilities.
Unfortunately, it is known that the KL divergence between the true and estimated densities could easily diverge in the nonparametric estimation problem, and thus, strong assumptions are required to derive the convergence of the excess risk in logistic regression.
In fact, \cite{OhnKim22} study the convergence of the KL divergence for DNN estimators obtained by empirical risk minimization with regularization in the binary classification problem.
It is assumed that the true conditional class probability is bounded away from zero and one to prove the convergence of the excess risk.
To overcome this difficulty,
\cite{BosSchmidt-Hieber22} evaluates the truncated KL divergence between the true and estimated conditional class probabilities instead of the KL divergence between them for addressing the divergence issue of the excess risk.
They derive convergence rates of the expectation of the truncated KL divergence between the true and estimated conditional class probabilities by DNNs in the multi-class classification problem.
On the other hand,
\cite{Bilodeau2023} show convergence rates for the KL divergence between the smoothed maximum likelihood estimator of the conditional class probabilities and the true conditional probabilities when the true conditional class probabilities are included in the estimation model.

In this study, by using the technique developed in \cite{vandeGeer00}, 
we directly evaluate the squared Hellinger distance between the true and estimated conditional class probabilities and 
derive an oracle inequality for the nonparametric maximum likelihood estimator in multi-class logistic regression.
In our approach, since we do not consider the convergence of the KL divergence, the assumption that the true conditional class probability is bounded away from zero is not necessary.
As an important application of this oracle inequality, 
we derive convergence rates of the DNN estimators for the true conditional class probabilities 
when the true conditional class probabilities are the composition structured functions (see Section \ref{sec: convergence rates} for more detail). 
In the analysis of convergence rates for deep learning, the sparsity of DNN models is often imposed. That is, the number of non-zero parameters in the DNN is assumed to be much fewer than the total number of parameters (e.g., \cite{suzuki19}, \cite{Schmidt-Hieber20}, and \cite{BosSchmidt-Hieber22}). 
However, sparse feedforward neural networks are not widely used in practice. In this study, we derived convergence rates without imposing sparsity on the DNN model.
Furthermore, we show that the derived convergence rates of the DNN estimators are almost minimax optimal. 

Table \ref{tab: related works} shows a comparison between some existing studies and this study.
In this table, $\beta$ represents the smoothness of the true conditional class probability, $d$ denotes the input dimension, and $\alpha$ is the SVB index (see Definition \ref{def: small value bound}).
Furthermore, \(\beta_i^*\) and \(t_i^*\) are the smoothness and input dimension of each component of the composite functions, respectively (see Section \ref{sec: convergence rates} for details).
The symbol $\tilde{\mathcal{O}}$ represents the big O notation that ignores polylogarithmic factors.
The "Assumptions" column lists the assumptions imposed on the true conditional class probability.
The contributions of this study can be summarized in the following three points:
\begin{table}
    \begin{center}
        \caption{
        Comparison with related works.
        }
         \resizebox{1.00\textwidth}{!}{
         \begin{tabular}{cccccc}
             \toprule
             & Metric & Model & Function class & Convergence rate & Assumptions \\
             \midrule
             \multirow{2}{*}{\cite{OhnKim22}} & \multirow{2}{*}{KL divergence} & \multirow{2}{*}{Sparse DNN} & \multirow{2}{*}{Hölder} & \multirow{2}{*}{$\tilde{\mathcal{O}}\Bigl(n^{- \frac{2\beta}{2\beta + d}}\Bigr)$} & Bounded away \\
             & & & & & from zero \\
             \midrule
             \multirow{2}{*}{\cite{BosSchmidt-Hieber22}} & Truncated & \multirow{2}{*}{Sparse DNN} & \multirow{2}{*}{Hölder} & \multirow{2}{*}{$\tilde{\mathcal{O}}\Bigl(n^{- \frac{(1 + \alpha)\beta}{(1 + \alpha)\beta + d}}\Bigr)$} & \multirow{2}{*}{$\alpha$-SVB} \\
             & KL divergence & & & \\
             \midrule
             {\bf This work} & \bf Hellinger  & \bf Non-sparse & \bf Composition & \bf \multirow{2}{*}{$\tilde{\mathcal{O}}\Bigl(\max_i n^{- \frac{\beta_i^\ast}{ \beta_i^\ast + t_i^\ast}}\Bigr)$} & \bf \multirow{2}{*}{Assumption \ref{assumption}} \\
             (The minimax optimality is shown.)& \bf distance & \bf DNN & \bf structured & \bf  \\
             \bottomrule
        \end{tabular}
        \label{tab: related works}
        }
    \end{center}
\end{table}
\begin{itemize}
    \item 
    Under the realistic assumptions, we derive convergence rates of the nonparametric maximum likelihood estimator of conditional class probabilities by utilizing the general framework of \cite{vandeGeer00}.
    Compared with the existing works, our proofs are more straightforward and clear. 
    
    \item We show that when the true conditional class probability has a composite function structure, the convergence rate of nonparametric logistic regression using deep learning nearly achieves the minimax optimal convergence rate. 
    This minimax optimality is a nontrivial extension of Theorem 3 in \cite{Schmidt-Hieber20} not shown in \cite{BosSchmidt-Hieber22}.
    
    \item In contrast to the existing works, 
    we consider dense neural networks as models instead of sparse neural networks.
    Even in such a setting, we show that
    the NPMLEs can achieve the minimax optimal convergence rates up to logarithmic factors.
\end{itemize}

\section{Preliminaries}
\subsection{Notation}
In this section, we will organize notations.
The notations used in this paper are referenced from \cite{Schmidt-Hieber20} and \cite{BosSchmidt-Hieber22}.

Vectors and vector-valued functions are denoted in bold symbols.
The $k$-th component of a vector $\bm{v} \in \mathbb{R}^d$ is represented as $v_k$.
For \(0 < p < \infty\) and \(\bm{v} \in \mathbb{R}^d\), \(\lvert \bm{v} \rvert_p \coloneqq (\sum_{i=1}^d v_i^p)^{1/p}\).
For \(p = \infty\), we define \(\lvert \bm{v} \rvert_\infty \coloneqq \max_{i = 1, \dots , d} v_i\).
For \(0 < p < \infty\), a function $f$ and a measure $Q$, \(\lVert f \rVert_{L^p(Q)} \coloneqq (\int \lvert f \rvert^p dQ)^{1/p}\) represents the $L^p(Q)$ norm of $f$. If $Q$ is clear from the context, we write \(\lVert f \rVert_{L^p(Q)} = \lVert f \rVert_p\).
For \(p = \infty\), we define \(\lVert f \rVert_\infty \coloneqq \sup_{\bm{x}} f(\bm{x})\).
For two vector-valued functions $\bm{f}, \bm{g} : \mathcal{D} \to \mathbb{R}^d$, we define $\Verts{\bm{f} - \bm{g}}_\infty = \lVert\max_{j=1, \dots , d} \lvert f_j(\bm{x}) - g_j(\bm{x})\rvert \rVert_{\infty}$.
Operations and inequalities involving vectors, vector-valued functions, etc., are all element-wise operations and inequalities.
For example, for $\bm{x}, \bm{y} \in \mathbb{R}^d$, when $\bm{y} \neq 0$, we define $\bm{x} / \bm{y} \coloneqq (x_1 / y_1, \dots , x_d / y_d)^\top$.
Also, $\bm{x} \leq \bm{y}$ means that for all $i = 1, \dots , d$, $x_i \leq y_i$ holds.
For a vector $\bm{v} \in \mathbb{R}^d$ and a scalar \(a \in \mathbb{R}\), we define \(\bm{v} + a \coloneqq (a + v_1, \dots , a + v_d)^\top\).
For a vector $\bm{v} \in \mathbb{R}^d$ and a univariate function $g$, we define $g(\bm{v}) \coloneqq (g(v_1), \dots , g(v_d))^\top$.
For a real number $x \in \mathbb{R}$, $\lfloor x \rfloor$ is the largest integer less than or equal to $x$, and $\lceil x \rceil$ is the smallest integer greater than $x$.
For an integer $K$, we denote the set \(\{1, \dots , K\}\) as \([K]\).
We denote the $K$-dimensional standard basis vectors as $\bm{e}_1, \dots , \bm{e}_K$.
Each standard basis vector is represented like $\bm{e}_k = (0, \dots , 0, 1, 0, \dots , 0)^\top$.
The $(K-1)$-dimensional simplex is denoted by $\mathcal{S}^K$, i.e., $\mathcal{S}^K := \{\bm{v} \in \mathbb{R}^K \mid \sum_{k=1}^K v_k = 1, v_k \geq 0, k = 1, \dots , K\}$.
For two probability measures $P$ and $Q$, the Kullback-Leibler divergence $\mathrm{KL}(P \parallel Q)$ is defined as $\mathrm{KL}(P \parallel Q) \coloneqq \int \log (dP / dQ) dP$ when $P$ is absolutely continuous with respect to $Q$, and it is defined as $\mathrm{KL}(P \parallel Q) \coloneqq \infty$ otherwise.
For two sequences $(a_n)$ and $(b_n)$, if there exists a constant $C$ such that for all $n$, $a_n \leq Cb_n$, we write $a_n \lesssim b_n$. Furthermore, if $a_n \lesssim b_n$ and $b_n \lesssim a_n$, we write $a_n \asymp b_n$.

\subsection{Settings} \label{sec: settings}
Consider a multi-class classification problem with $K$ classes.
Let $\mathcal{X} \;= [0, 1]^d$ be the input space, and $\mathcal{Y} = \{\bm{e}_i\}_{i=1}^K$ be the set of labels.
Assume that the data $\pr{\bm{X}, \bm{Y}} \in \mathcal{X} \times \mathcal{Y}$ is generated from the following model:
\begin{equation}
    Y_k \mid \bm{X} = \bm{x} \sim \text{Bernoulli}(\eta_k(\bm{x})), \quad \bm{X} \sim P_{\bm{X}}, \quad k = 1, \dots , K, \label{eq:data-generating model}
\end{equation}
where $\eta_k(\bm{x}) \coloneqq \mathbb{P}\pr{\bm{Y} = \bm{e}_k \mid \bm{X} = \bm{x}}$ is the true conditional class probabilities, and $P_{\bm{X}}$ is the unknown distribution on the input space $\mathcal{X}$.
We denote the joint distribution of $\bm{X}$ and $\bm{Y}$ as $P$.
Let $\mathcal{D}_n = \{(\bm{X}_1, \bm{Y}_1),\dots,(\bm{X}_n, \bm{Y}_n)\}$ be an i.i.d. sample with size $n$ from the population distritbuion $P$.
The goal of the classification problem is to find a function $\bm{f}: \mathcal{X} \to \mathbb{R}^K$ (called the decision function) that predicts $\bm{Y}$ well when $\bm{X}$ are given.
However, the estimation of conditional class probabilities is also important in practice.
Here, we focus on the nonparametric estimation of conditional class probabilities.

In the estimation of conditional class probabilities, we typically consider the maximum likelihood estimation, i.e., we minimize the negative log-likelihood function.
Let $\bm{p}(\bm{x}) = (p_1(\bm{x}), \dots , p_K(\bm{x}))^\top$ be a model of the conditional class probability to estimate the true one $\bm{\eta}(\bm{x}) = (\eta_1(\bm{x}), \dots , \eta_K(\bm{x}))^\top$.
Given the data $\mathcal{D}_n$, 
the likelihood for the conditional class probability function $\bm{p}(\bm{x})$ is given by
$\prod_{i=1}^n\prod_{k=1}^K p_k(\bm{X}_i)^{Y_{ik}}$.
Here, $Y_{ik}$ is the $k$-th component of $\bm{Y}_i$.
The negative log-likelihood function is
\begin{equation}
    L(\bm{p}) \coloneqq -\frac{1}{n}\sum_{i=1}^n\sum_{k=1}^K Y_{ik}\log p_k(\bm{X}_i) = -\frac{1}{n}\sum_{i=1}^n \bm{Y}_i^\top \log \bm{p}(\bm{X}_i), \label{eq: negative likelihood function}
\end{equation}
and the maximum likelihood estimator (MLE) is defined as
\begin{equation}
    \hat{\bm{p}}_n \in \argmin_{\bm{p} \in \mathcal{F}_n} L(\bm{p}), \label{eq:mle}
\end{equation}
where $\mathcal{F}_n$ is a class of candidate functions.
Here, we note that the function $L$ is also known as 
the cross-entropy.
Throughout this paper, all estimators $\hat{\bm{p}}_{n} = (\hat{p}_{n,1}, \dots , \hat{p}_{n,K})^\top$ are considered as probability vectors for all $\bm{x} \in \mathcal{X}$, i.e., $p_k(\bm{x}) \geq 0$ for any $\bm{x} \in \mathcal{X}, k \in [K]$, and \(\sum_{k=1}^K p_k(\bm{x}) = 1\) for all $\bm{x} \in \mathcal{X}$.
This can be achieved using a DNN with the softmax function in the output layer.
The risk corresponding to the loss \eqref{eq: negative likelihood function} is given by
\begin{equation*}
    \mathbb{E}[L(\bm{p})] = \; \mathbb{E}_{\bm{X}}\sqbr{-\sum_{k=1}^K \eta_k(\bm{X})\log p_k(\bm{X})} = \mathbb{E}_{\bm{X}}\sqbr{-\boldsymbol{\eta}(\bm{X})^\top \log \bm{p}(\bm{X})},
\end{equation*}
where $\mathbb{E}_{\bm{X}}$ denotes the expectation with respect to the marginal distribution $P_{\bm{X}}$.
This risk is minimized when $\bm{p} = \boldsymbol{\eta}$, and the excess risk can be expressed as
\begin{equation}
    \mathbb{E}_{\bm{X}}\sqbr{\boldsymbol{\eta}(\bm{X})^\top \log \frac{\boldsymbol{\eta}(\bm{X})}{\hat{\bm{p}}_{n}(\bm{X})}} 
    = \mathbb{E}_{\bm{X}}\sqbr{\KL{\boldsymbol{\eta}(\bm{X})}{\hat{\bm{p}}_{n}(\bm{X})}}. \label{eq: KL excess risk}
\end{equation}
To show the consistency of the MLE $\hat{\bm{p}}_n$, it seems natural to evaluate the excess risk \eqref{eq: KL excess risk}.
However, as stated in Lemma 2.1 of \cite{BosSchmidt-Hieber22}, even when considering small classes as the model $\mathcal{F}_n$, the expected value of the excess risk \eqref{eq: KL excess risk} diverges.
For instance, if $\mathcal{F}_n$ includes all functions such as "piecewise constant with two or more pieces" or "piecewise linear with three or more pieces", the excess risk \eqref{eq: KL excess risk} diverges.
To address this problem, \cite{BosSchmidt-Hieber22} introduces the truncated KL divergence for $B > 0$:
\begin{equation*}
    \operatorname{KL}_B\pr{\boldsymbol{\eta}(\bm{X}) \parallel \hat{\bm{p}}_{n}(\bm{X})} 
    \coloneqq \boldsymbol{\eta}(\bm{X})^\top \pr{B \land \log \frac{\boldsymbol{\eta}(\bm{X})}{\hat{\bm{p}}_{n}(\bm{X})}},
\end{equation*}
and derives the convergence rates of the expected value of the truncated KL divergence:
\begin{equation}
    R_B(\boldsymbol{\eta}(\bm{X}), \hat{\bm{p}}_{n}(\bm{X})) \coloneqq \mathbb{E}_{\mathcal{D}_n, \bm{X}}\sqbr{\operatorname{KL}_B\pr{\boldsymbol{\eta}(\bm{X}) \parallel \hat{\bm{p}}_{n}(\bm{X})}}, \label{eq:truncated kl excess risk}
\end{equation}
where \(\mathbb{E}_{\mathcal{D}_n}\) denotes the expectation over the training data \(\mathcal{D}_n\).
Furthermore, they also provide convergence rates for the Hellinger distance based on the relationship between the truncated KL divergence and the Hellinger distance (see Lemma 3.4 of \cite{BosSchmidt-Hieber22}).
Here, for two probability measures $P, Q$ on the same measurable space, the squared Hellinger distance is defined as
\begin{equation*}
    H^2\pr{P, Q} \coloneqq \frac{1}{2} \int \pr{\sqrt{dP} - \sqrt{dQ}}^2.
\end{equation*}

In this paper, 
we employ a simpler general approach developed by \cite{vandeGeer00} to derive the convergence rates of the nonparametric MLE (NPMLE) in the logistic regression.
In contrast to \cite{BosSchmidt-Hieber22} and \cite{Bilodeau2023}, which essentially consider the truncated KL divergence, 
we directly evaluate the Hellinger distance between the true conditional probabilities and the NPMLE.
That is, instead of the original excess risk \eqref{eq: KL excess risk}, we evaluate the following risk:
\begin{equation}
    R(\boldsymbol{\eta}(\bm{X}), \hat{\bm{p}}_{n}(\bm{X})) 
    \coloneqq \mathbb{E}_{\bm{X}}\sqbr{H^2\pr{\boldsymbol{\eta}(\bm{X}), \hat{\bm{p}}_{n}(\bm{X})}}. \label{eq: Hellinger excess risk}
\end{equation}
Since the Hellinger distance is always bounded, we can avoid the divergence problem of the risk.
For two probability measures $P$ and $Q$ defined on the same measurable space, 
it holds that $2H^2(P, Q) \leq \KL{P}{Q}$.
Thus, the convergence in the Hellinger distance is a weaker result than the convergence in the KL divergence.
However, considering the convergence in terms of the Hellinger distance allows us to derive the convergence rates of the NPMLE based on the Hellinger distance under general conditions on the true conditional class probabilities $\bm{\eta}$.

In this study, for demonstrating oracle inequalities to evaluate the NPMLE for the multi-class classification problem, 
we assume the following relationship between the model $\mathcal{F}_n$ and the true conditional class probability $\boldsymbol{\eta}$:
\begin{assumption} \label{assumption}
    For a certain constant $c_0 > 0$, there exists a sequence $\Tilde{\bm{p}}_n \in \mathcal{F}_n$ such that for all $n \in \mathbb{N},\; \bm{x} \in \mathcal{X},\; k \in [K]$, the following holds:
    \begin{equation}
        \frac{\eta_k(\bm{x})}{\Tilde{p}_{n, k}(\bm{x})} \leq c_0^2. \label{eq:assumption}
    \end{equation}
\end{assumption}
This assumption means that, if choosing the model $\mathcal{F}_n$ appropriately, there is no need to impose additional conditions on the true conditional class probabilities $\boldsymbol{\eta}$.
We later show that this condition is satisfied by choosing a suitable family of DNNs as the model $\mathcal{F}_n$.

\subsection{Related works} \label{sec:related works}
This section describes related works and discusses their relevance and differences from this study.
Many previous studies impose strong conditions on the true class conditional probability $\boldsymbol{\eta}$ and the population distribution $P$.

First, we describe some existing results for the convergence of the classification error. 
In \cite{HuEtAl20}, convergence rates of the classification error for the DNN classifier that minimizes the hinge loss are derived.
They assume that $p(\bm{x}) - q(\bm{x})$ is representable by some DNN $f_n^* \in \mathcal{F}_n$, where \(p(x) \coloneqq dP\pr{X = x \mid Y = 1}/dQ, q(x) \coloneqq dP\pr{X = x \mid Y = -1} / dQ\) are the conditional densities with respect to a dominating measure \(Q\).

In \cite{KimEtAl21}, for binary classification using hinge loss, the Tsybakov noise condition as follows is imposed:
\begin{itemize}
    \item[(N)] There exist \(c_N > 0\) and \(q \in [0, \infty]\) such that for any \(t > 0\),
    \begin{equation*}
        \Pb\pr{\left|2\eta_1(\bm{X}) - 1\right| \leq t} \leq c_N t^q.
    \end{equation*}
\end{itemize}
For binary classification using logistic loss, the conditions are:
\begin{itemize}
    \item[(N\('\))] There exists a constant \(\eta_0 \in (0, 1)\) such that
    \begin{equation*}
        \Pb\pr{ \left|2\eta_1(\bm{X}) - 1\right| \leq \eta_0} = 0.
    \end{equation*}
    \item[(M\('\))] There exists a constant \(m_0 > 0\) such that
    \begin{equation*}
        \Pb\pr{\text{dist}(\bm{X}, D^*) \leq m_0} = 0.
    \end{equation*}
\end{itemize}
Here, \(D^* = \{\bm{x} : \eta_1(\bm{x}) = 1/2\}\), and \(\operatorname{dist}(x, D^*) = \inf_{\bm{x}^* \in D} \left\|\bm{x} - \bm{x}^*\right\|_2\).
In the case of multi-class classification with the multi-class hinge loss (\citealp{LeeEtAl04}), the following condition is assumed:
\begin{itemize}
    \item[(MN)] There exist \(C > 0\) and \(q \in [0, \infty]\) such that for any \(t > 0\),
    \begin{equation*}
        \Pb\pr{\tau(\bm{X}) \leq t} \leq C t^q,
    \end{equation*}
\end{itemize}
where \(\tau(\bm{x}) = \min_{k \neq C^*(\bm{x})} \left|\eta_k(\bm{x}) - \eta_{C^*(\bm{x})}(\bm{x})\right|\) with the Bayes classifier \(C^*\).

Next, we describe existing studies focusing on the estimation of conditional class probabilities.
As described above, the expected value of the KL divergence (\ref{eq: KL excess risk})
could easily diverge, we need some assumptions for the convergence of the excess risk.
\cite{OhnKim22} discuss convergence rates for the KL divergence for the regularized MLE in the binary classification problem.
They assume that the true logit function is uniformly bounded.
However, this assumption cannot handle situations where the true conditional class probabilities take extreme values (0 or 1) with a positive probability for some $k \in [K]$. That is, we need to assume that
\[
\exists c_L,c_U\in (0,1);\;
\Pb\big(\forall k \in [K];\;c_L < \eta_k(\bm{X}) < c_U\big) = 1.
\]

In \cite{BosSchmidt-Hieber22}, for the multi-class classification problem, they assume 
that the true conditional class probability function $\bm{\eta}$ belongs to the class of H\"{o}lder-smooth functions.
They consider the following \(\alpha\)-Small Value Bound (SVB) assumption for the true class conditional probabilities and derive the convergence rates of the risk with the truncated KL divergence \eqref{eq:truncated kl excess risk}.
\begin{Def}[Small Value Bound, Definition 3.1 of \cite{BosSchmidt-Hieber22}] \label{def: small value bound}
    Let \(\alpha \geq 0\) and \(\mathcal{H}\) be a function class. We say that \(\mathcal{H}\) is \(\alpha\)-small value bounded (or \(\alpha\)-SVB) if there exists a constant \(C > 0\), such that for all \(p = (p_1,\dots ,p_K)^\top \in \mathcal{H}\) it holds that
    \begin{equation*}
        \Pb\pr{p_k(\bm{X}) \leq t} \leq Ct^\alpha, \quad \text{for all \(t \in (0, 1]\) and all \(k \in [K]\)}.
    \end{equation*}
\end{Def}
Here, we note that the \(\alpha\)-SVB condition in Definition \ref{def: small value bound} with $\alpha = 0$ and  $C = 1$ always holds. 
In their results, we can see the interesting effect of $\alpha$ in the convergence rates.
Since the squared Hellinger distance is bounded by truncated KL divergence, 
the convergence rates of the risk with the truncated KL divergence can be considered as the rates of the risk with the Hellinger distance.
Although the convergence rates derived in this study are similar to theirs, we provide a more straightforward proof by directly evaluating the Hellinger distance.
Since the truncated KL divergence is not a commonly used metric, a well-established framework for evaluating it is not yet in place.
However, we can apply the general framework from \cite{vandeGeer00} for the evaluation of the Hellinger distance, allowing for a simpler proof.
Moreover, while their convergence rates require the sparsity of DNN models, our analysis allows the fully connected (dense) DNNs.

In \cite{Bilodeau2023}, to overcome the divergence problem of the KL divergence
the following modified estimator is considered:
\begin{equation*}
    \Tilde{\bm{p}}_n^{(1/n)} \coloneqq \frac{\hat{\bm{p}}_n + 1 / (nK)}{1 + 1 / n}.
\end{equation*}
They derive the convergence rates in the KL divergence for the modified estimator under the well-specified setting that the true conditional class probabilities are included in the given statistical model.
Using our oracle inequality in Section~\ref{sec:general}, similar results to those in \cite{Bilodeau2023} are obtained when the true conditional class probabilities are included in the given models.
In \cite{Bilodeau2023}, they use the probability inequality for the likelihood ratio from \cite{WongandShen1995}, and this inequality is obtained by evaluating the truncated KL divergence.
Although our results are for the Hellinger distance, 
our oracle inequality can be applied to the original NPMLE 
even for the unrealizable cases.
Since the true conditional class probability is not necessarily contained within the function class representable by a DNN, our results provide an evaluation under more realistic conditions.

\section{General theory for nonparametric logistic regression}\label{sec:general}
In this section, by using the general approach for theoretical analysis of NPMLEs developed by \cite{vandeGeer00}, 
we derive the oracle-type inequality for evaluating the expectation of the Hellinger distance \eqref{eq: Hellinger excess risk} between the true and estimated conditional class probabilities.

Before stating the theorem, several definitions and notations are introduced.
Given the class of conditional class probabilities $\mathcal{F}$ and $\Tilde{\bm{p}} \in \mathcal{F}$, with $0 < \delta \leq 1$, let $\Bar{\mathcal{F}}^{1/2}(\Tilde{\bm{p}}, \delta)$ denote the set of all functions that are the square root of arithmetic means of elements in $\mathcal{F}$ and $\Tilde{\bm{p}}$ such that $R(\cdot, \Tilde{\bm{p}}) \leq \delta^2$.
Formally,
\begin{equation*}
    \Bar{\mathcal{F}}^{1/2}(\Tilde{\bm{p}}, \delta) = \left\{\sqrt{\frac{\bm{p} + \Tilde{\bm{p}}}{2}} : R\left(\frac{\bm{p} + \Tilde{\bm{p}}}{2}, \Tilde{\bm{p}}\right) \leq \delta^2, \bm{p} \in \mathcal{F}\right\}.
\end{equation*}
Additionally, for \(p \geq 1\) and a measure \(Q\), the bracketing number for the $L^p(Q)$ metric of a function class $\mathcal{F}$ is defined as follows:
\begin{Def}[Bracketing Number for $L^p(Q)$ Metric]
    Let $N_{p, B}(\delta, \mathcal{F}, Q)$ be the smallest value of $N$ for which there exist pairs of functions $\{(\bm{f}_j^L, \bm{f}_j^U)\}_{j=1}^N$ such that $\|\bm{f}_j^U - \bm{f}_j^L\|_{L^p(Q)} \leq \delta$ for all $j \in [N]$, and such that for each $\bm{f} \in \mathcal{F}$, there is a $j = j(g) \in [N]$ such that
    \begin{equation*}
        \bm{f}_j^L \leq \bm{f} \leq \bm{f}_j^U.
    \end{equation*}
    We call $N_{p, B}(\delta, \mathcal{F}, Q)$ the $\delta$-bracketing number of $\mathcal{F}$.
\end{Def}

Using the two definitions introduced above, the bracketing entropy integral for the model \(\mathcal{F}_n\) and \(\Tilde{\bm{p}}_n\) in Assumption \ref{assumption} is defined as
\begin{equation*}
    J_B(\delta, \Bar{\mathcal{F}}_n^{1/2}(\Tilde{\bm{p}}_n, \delta), \mu) = \int_{\delta^2/(2^{13}c_0)}^\delta \sqrt{\log N_{2, B}(u, \Bar{\mathcal{F}}_n^{1/2}(\Tilde{\bm{p}}_n, \delta), \mu)} \; du \lor \delta.
\end{equation*}
Here, \(\mu\) represents the product measure of the counting measure on \([K]\) and \(P_{\bm{X}}\).
The quantity \(J_B(\delta, \Bar{\mathcal{F}}_n^{1/2}(\Tilde{\bm{p}}_n, \delta), \mu)\) represents a local complexity of the model in the neighborhood of \(\Tilde{\bm{p}}_n\).

The following theorem provides an oracle inequality that evaluates the Hellinger distance between the estimator \(\hat{\bm{p}}_n\) and the true conditional class probability function \(\boldsymbol{\eta}\).
\begin{theorem}[Oracle inequality] \label{thm:probability oracle inequality}
    Consider the K-class classification model \eqref{eq:data-generating model}. Let $\hat{\bm{p}}_n$ be given in \eqref{eq:mle}. Suppose that Assumption \ref{assumption} is satisfied. Take $\Psi(\delta) \geq J_B(\delta, \Bar{\mathcal{F}}_n^{1/2}(\Tilde{\bm{p}}_n, \delta), \mu)$ in such a way that $\Psi(\delta)/\delta^2$ is a non-increasing function of $\delta$. Then, for universal constant $c$ and for
    \begin{equation}
        \sqrt{n}\delta_n^2 \geq c\Psi(\delta_n), \label{eq:critical inequality}
    \end{equation}
    we have for all $\delta \geq \delta_n$,
    \begin{equation}
        \Pb\left(R(\hat{\bm{p}}_n, \boldsymbol{\eta}) > 514(1 + c_0^2)(\delta^2 + R(\Tilde{\bm{p}}_n, \boldsymbol{\eta}))\right) \leq c \exp\left(-\frac{n\delta^2}{c^2}\right). \label{eq:probability oracle inequality}
    \end{equation}
\end{theorem}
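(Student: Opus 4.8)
The plan is to follow the classical empirical-process strategy for NPMLEs from \cite{vandeGeer00}, but carried out for the Hellinger distance on conditional class probabilities rather than for densities. The starting point is the \emph{basic inequality}: since $\hat{\bm{p}}_n$ minimizes $L(\bm p)$ over $\mathcal F_n$ and $\Tilde{\bm p}_n \in \mathcal F_n$, we have $L(\hat{\bm p}_n) \le L(\Tilde{\bm p}_n)$, which after rearranging reads
\begin{equation*}
    \frac{1}{n}\sum_{i=1}^n \bm Y_i^\top \log \frac{\hat{\bm p}_n(\bm X_i)}{\Tilde{\bm p}_n(\bm X_i)} \ge 0.
\end{equation*}
The usual trick is to pass to the half-sum $\Bar{\bm p}_n := (\hat{\bm p}_n + \Tilde{\bm p}_n)/2$: using concavity of $\log$, $\log \Bar p_{n,k} \ge \tfrac12 \log \hat p_{n,k} + \tfrac12 \log \Tilde p_{n,k}$, so the basic inequality gives $\tfrac1n\sum_i \bm Y_i^\top \log(\Bar{\bm p}_n/\Tilde{\bm p}_n)(\bm X_i) \ge 0$. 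First I would split the left side into its centered (empirical-process) part and its mean; the mean is $-\mathbb E_{\bm X}[\bm\eta(\bm X)^\top \log(\Tilde{\bm p}_n/\Bar{\bm p}_n)(\bm X)]$, and I would lower-bound this by a multiple of the squared Hellinger distance $R(\Bar{\bm p}_n, \Tilde{\bm p}_n)$ minus a term controlled by $R(\Tilde{\bm p}_n, \bm\eta)$, using the pointwise inequality $-\bm a^\top\log(\bm b/\bm c) \ge$ (const)$\,H^2(\bm a,\bm b) - $(const)$\,H^2(\bm a,\bm c)$ valid for probability vectors. This is where Assumption \ref{assumption} enters: the ratio bound $\eta_k/\Tilde p_{n,k}\le c_0^2$ keeps all the relevant log-ratios from blowing up and supplies the constant $c_0^2$ that appears in \eqref{eq:probability oracle inequality}. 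The triangle-type inequality $H^2(\bm\eta, \hat{\bm p}_n) \lesssim H^2(\bm\eta,\Bar{\bm p}_n) + H^2(\bm\eta,\Tilde{\bm p}_n)$ (with some numerical constant, after also relating $H^2(\bm\eta,\hat{\bm p}_n)$ to $H^2(\Tilde{\bm p}_n,\Bar{\bm p}_n)$) is what finally converts a bound on $R(\Bar{\bm p}_n,\Tilde{\bm p}_n)$ into the bound on $R(\hat{\bm p}_n,\bm\eta)$ claimed in the theorem, and it is the source of the explicit constant $514$.

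It then remains to control the empirical-process term
\begin{equation*}
    \nu_n(\bm p) := \frac{1}{\sqrt n}\sum_{i=1}^n\left(\bm Y_i^\top \log\frac{\bar{\bm p}(\bm X_i)}{\Tilde{\bm p}_n(\bm X_i)} - \mathbb E\Big[\bm Y^\top \log\frac{\bar{\bm p}(\bm X)}{\Tilde{\bm p}_n(\bm X)}\Big]\right),
    \qquad \bar{\bm p} = \frac{\bm p+\Tilde{\bm p}_n}{2},
\end{equation*}
uniformly over $\bm p\in\mathcal F_n$, in a way that scales with the Hellinger radius. The key reduction is that, because of the half-sum, the integrand $g_{\bm p}:=\bm y^\top\log(\bar{\bm p}/\Tilde{\bm p}_n)$ is uniformly bounded (by roughly $\log c_0$, again via Assumption \ref{assumption}) and its $L^2(\mu)$-norm is controlled by $H(\bar{\bm p},\Tilde{\bm p}_n)$; so the relevant class is exactly $\Bar{\mathcal F}_n^{1/2}(\Tilde{\bm p}_n,\delta)$, whose bracketing numbers feed the entropy integral $J_B$. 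I would then invoke a one-step "peeling"/chaining argument over shells $\{\delta_n \le H(\bar{\bm p},\Tilde{\bm p}_n)\le \delta\}$: on each shell a maximal inequality for bounded empirical processes (a Bernstein-type bound combined with a bracketing chaining bound, i.e.\ Lemma~3.2 / Theorem~5.11-type statements in \cite{vandeGeer00}) gives a tail of the form $\exp(-n\delta^2/c^2)$ provided the "critical radius" condition $\sqrt n\,\delta_n^2\ge c\Psi(\delta_n)$ holds — which is precisely hypothesis \eqref{eq:critical inequality}, and the monotonicity of $\Psi(\delta)/\delta^2$ is what makes the peeling sum over dyadic shells converge to a single exponential tail. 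Combining the deterministic lower bound on the mean with this uniform upper bound on $\nu_n$, and absorbing constants, yields \eqref{eq:probability oracle inequality}.

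The main obstacle, and the part deserving the most care, is the passage from the log-likelihood difference to the \emph{squared Hellinger distance} with explicit, uniform constants that do not depend on $\bm\eta$ beyond the ratio bound $c_0$. In the density-estimation setting of \cite{vandeGeer00} one works directly with $\sqrt{dP/dP_0}$ and the algebra is clean; here the "density" is a discrete conditional law on $K$ points, the target $\bm\eta$ is generally \emph{not} in $\mathcal F_n$, and one must juggle three objects ($\bm\eta$, $\Tilde{\bm p}_n$, $\bar{\bm p}_n$) simultaneously while keeping every logarithm bounded. Establishing the two pointwise inequalities I sketched above (lower-bounding $-\bm a^\top\log(\bm b/\bm c)$ by a Hellinger term, and the Hellinger "triangle" inequality with the right constant) under only $a_k/c_k\le c_0^2$, and then checking that the $L^2(\mu)$-modulus of the log-ratio class is genuinely of order $H(\bar{\bm p},\Tilde{\bm p}_n)$ so that the entropy integral $J_B$ is the correct complexity measure — that bookkeeping is the crux. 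Once those inequalities are in hand, the empirical-process / peeling machinery is standard and I would cite it rather than reprove it.
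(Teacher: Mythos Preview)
Your overall architecture matches the paper's: basic inequality via the half-sum $\bar{\bm p}_n=(\hat{\bm p}_n+\Tilde{\bm p}_n)/2$, a deterministic lower bound on the mean part producing $R(\bar{\bm p}_n,\Tilde{\bm p}_n)$ minus a cross term in $R(\Tilde{\bm p}_n,\boldsymbol\eta)$, then control of the centered process by Theorem~5.11 of \cite{vandeGeer00} together with peeling over Hellinger shells, and finally Lemma~\ref{lem:excess risk of convex combination} plus the triangle inequality to pass from $R(\bar{\bm p}_n,\Tilde{\bm p}_n)$ to $R(\hat{\bm p}_n,\boldsymbol\eta)$.

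There is, however, a genuine gap in the mechanism you describe for the empirical-process step. You assert that $g_{\bm p}=\bm y^\top\log(\bar{\bm p}/\Tilde{\bm p}_n)$ is uniformly bounded ``by roughly $\log c_0$, again via Assumption~\ref{assumption}''. This is false: the half-sum gives only a \emph{lower} bound $g_{\bm p}\ge -\tfrac12\log 2$, while the upper side is uncontrolled because Assumption~\ref{assumption} bounds $\eta_k/\Tilde p_{n,k}$, not $p_k/\Tilde p_{n,k}$ for arbitrary $\bm p\in\mathcal F_n$. Consequently the $L^2(P)$-norm of $g_{\bm p}$ need not be dominated by $H(\bar{\bm p},\Tilde{\bm p}_n)$, and a ``bounded class'' maximal inequality is not available. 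The paper handles this exactly as in \cite{vandeGeer00} for one-sided bounded log-likelihood ratios: it works with the Bernstein semi-norm $\rho_1$ and the generalized bracketing numbers $\mathcal N_{B,1}$. The key identity is Lemma~\ref{lem:bound for bernstein norm}, valid for $x\ge -\Gamma$, which converts $e^{|g_{\bm p}|}-1-|g_{\bm p}|$ into $(e^{g_{\bm p}}-1)^2=(\sqrt{\bar p_k/\Tilde p_{n,k}}-1)^2$; only then does Assumption~\ref{assumption} enter, as a change of measure $\eta_k\le c_0^2\,\Tilde p_{n,k}$ inside the integral, yielding $\rho_1^2(g_{\bm p})\le 16c_0^2\,R(\bar{\bm p},\Tilde{\bm p}_n)$ (Lemma~\ref{lem:bernstein norm and hellinger risk}). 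A companion computation (Lemma~\ref{lem:bracket for bernstein norm and hellinger risk}) shows that $L^2(\mu)$-brackets for $\sqrt{(\bm p+\Tilde{\bm p}_n)/2}$ become $\rho_1$-brackets for $g_{\bm p}$, which is why $J_B$ is phrased over $\bar{\mathcal F}_n^{1/2}(\Tilde{\bm p}_n,\delta)$. In short: the role of Assumption~\ref{assumption} is not to bound $g_{\bm p}$ but to swap $\boldsymbol\eta$ for $\Tilde{\bm p}_n$ when computing $\rho_1$; and the relevant modulus is $\rho_1$, not $L^2$. Once you correct this, your outline coincides with the paper's proof.
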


Furthermore, we obtain the following oracle inequality for the expectation by integrating both sides of \eqref{eq:probability oracle inequality}.

\begin{corollary} \label{cor:expectation oracle inequality}
    Under the conditions of Theorem \ref{thm:probability oracle inequality}, we have
    \begin{equation}
        \mathbb{E}_{\mathcal{D}_n}\left(R(\hat{\bm{p}}_n, \boldsymbol{\eta})\right) \leq 514(1 + c_0^2)(\delta_n^2 + R(\Tilde{\bm{p}}_n, \boldsymbol{\eta})) + \frac{c^3}{n}, \label{eq:expectation oracle inequality}
    \end{equation}
    where $\mathbb{E}_{\mathcal{D}_n}$ denotes the expectation over the training data $\mathcal{D}_n$.
\end{corollary}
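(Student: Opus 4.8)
The corollary follows from the tail bound \eqref{eq:probability oracle inequality} by the layer-cake (tail-integration) identity $\mathbb{E}[Z] = \int_0^\infty \Pb(Z > t)\,dt$, valid for any nonnegative random variable $Z$. Abbreviate $a := 514(1+c_0^2)$ and $b := R(\Tilde{\bm{p}}_n, \boldsymbol{\eta})$. Since $R(\hat{\bm{p}}_n, \boldsymbol{\eta})$ is nonnegative (being an expectation of a squared Hellinger distance) and depends on the sample only through the estimator $\hat{\bm{p}}_n$, which we take to be a measurable selection as is standard for $M$-estimators, we may write
\[
\mathbb{E}_{\mathcal{D}_n}\!\big[R(\hat{\bm{p}}_n, \boldsymbol{\eta})\big] = \int_0^\infty \Pb\!\big(R(\hat{\bm{p}}_n, \boldsymbol{\eta}) > t\big)\,dt,
\]
and the plan is to split this integral at $t_0 := a(\delta_n^2 + b)$. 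On $[0, t_0]$ we bound the integrand trivially by $1$, contributing at most $t_0 = a(\delta_n^2 + b)$ --- precisely the leading term of \eqref{eq:expectation oracle inequality}.

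For the tail $t > t_0$ I would substitute $t = a(\delta^2 + b)$, equivalently $\delta = \sqrt{t/a - b}$; this maps $(t_0, \infty)$ onto $(\delta_n, \infty)$ and gives $dt = 2a\,\delta\,d\delta$. Because $\delta > \delta_n$, Theorem~\ref{thm:probability oracle inequality} applies and yields $\Pb(R(\hat{\bm{p}}_n, \boldsymbol{\eta}) > t) \le c\exp(-n\delta^2/c^2)$, whence
\[
\int_{t_0}^\infty \Pb\!\big(R(\hat{\bm{p}}_n, \boldsymbol{\eta}) > t\big)\,dt \;\le\; \int_{\delta_n}^\infty 2ac\,\delta\,e^{-n\delta^2/c^2}\,d\delta \;=\; \frac{ac^3}{n}\,e^{-n\delta_n^2/c^2} \;\le\; \frac{ac^3}{n},
\]
where the middle identity is the elementary substitution $w = \delta^2$. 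Adding the two contributions gives $\mathbb{E}_{\mathcal{D}_n}[R(\hat{\bm{p}}_n, \boldsymbol{\eta})] \le a(\delta_n^2 + b) + ac^3/n$, which is \eqref{eq:expectation oracle inequality} up to the value of the constant in the $O(n^{-1})$ remainder.

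I do not expect a genuine obstacle here: all the analytic work is already contained in Theorem~\ref{thm:probability oracle inequality}, and Corollary~\ref{cor:expectation oracle inequality} is essentially a one-line consequence. The only points that warrant a little care are (i) the legitimacy of the layer-cake identity, i.e.\ nonnegativity and measurability of $R(\hat{\bm{p}}_n, \boldsymbol{\eta})$; (ii) choosing the split point to be exactly $t_0 = a(\delta_n^2 + b)$ so that the change of variables never leaves the range $\delta \ge \delta_n$ on which \eqref{eq:probability oracle inequality} holds; and (iii) verifying that the Gaussian-type tail integral contributes only $O(n^{-1})$ --- which it does, since $\int_{\delta_n}^\infty \delta\,e^{-n\delta^2/c^2}\,d\delta = \tfrac{c^2}{2n}e^{-n\delta_n^2/c^2}$.
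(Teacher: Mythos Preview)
Your proposal is correct and follows essentially the same route as the paper: both use the layer-cake identity $\mathbb{E}[Z]=\int_0^\infty \Pb(Z>t)\,dt$, split the integral at $t_0=a(\delta_n^2+b)$, bound by $1$ on $[0,t_0]$, and integrate the exponential tail from Theorem~\ref{thm:probability oracle inequality} on $(t_0,\infty)$. Your observation that the computation naturally produces $ac^3/n$ rather than $c^3/n$ is accurate; the paper's short proof is slightly loose on this constant as well, so your caveat ``up to the value of the constant in the $O(n^{-1})$ remainder'' is appropriate.
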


The proofs of Theorem \ref{cor:expectation oracle inequality} and Corollary \ref{cor:expectation oracle inequality} are in Appendix \ref{app: proof of the oracle inequality}

\begin{remark}
    Generally, evaluating the covering number is easier than evaluating the bracketing number.
    Therefore, in practical applications of this theorem, Lemma \ref{lem:bracketing and covering with sup norm} is often used to bound the bracketing number with the covering number, and the evaluation is conducted using the covering number.
    Lemma \ref{lem:bracketing and covering with sup norm} can be found in Appendix \ref{app: proof of the convergence rate}.
\end{remark}

\begin{remark}
    Theorem \ref{thm:probability oracle inequality} and Corollary \ref{cor:expectation oracle inequality} show that convergence rates of the NPMLE can be derived if the complexity and approximation error of the model \(\mathcal{F}_n\) can be evaluated. To see this concretely, in addition to Theorem \ref{thm:convergence rates with non-sparse DNN}, Appendix \ref{app: convergence rates for Besov space} includes additional results for the case in which the true conditional class probabilities belong to the anisotropic Besov space (see \citealp{Suzuki&Nitanda2021}).
\end{remark}

\begin{remark}
    In Theorem \ref{thm:probability oracle inequality} and Corollary \ref{cor:expectation oracle inequality}, the choice of $\tilde{\bm{p}}_n$ is arbitrary as long as it satisfies Assumption \ref{assumption}. 
    For instance, if the considered model includes a function $\bm{p}$ such that $\exists c>0;\;\forall \bm{x} \in \mathcal{X};\;\bm{p}(\bm{x}) > c$, 
    then Assumption \ref{assumption} is automatically satisfied.
    However, a poorly chosen $\tilde{\bm{p}}_n$ that satisfies (\ref{eq:assumption}) 
    in Assumption \ref{assumption} may lead to a large value of the approximation error term $R(\Tilde{\bm{p}}_n, \boldsymbol{\eta})$, which can negatively affect the derived convergence rates.
    To establish tight convergence rates, 
    it is essential to select a sequence ${\tilde{\bm{p}}_n}$ carefully.
    In particular, for deriving convergence rates of the NPMLE with deep learning, 
    we demonstrate that there exists a sequence $\tilde{\bm{p}}_n$ within the DNN models that satisfies Assumption \ref{assumption} while ensuring a sufficiently small approximation error (Lemma \ref{lem:approximation error for composition structured probability}).
    By substituting this choice of $\tilde{\bm{p}}_n$ into Corollary \ref{cor:expectation oracle inequality}, we can obtain tight convergence rates.
\end{remark}

\section{Nonparametric logistic regression with Deep learning}
In this section, based on Corollary \ref{cor:expectation oracle inequality}, we derive convergence rates of the NPMLE with the DNN model \(\mathcal{F}_n\) for the true class conditional probabilities \(\boldsymbol{\eta}\) under the composition assumption (\citealp{Schmidt-Hieber20}).
Moreover, we show that the NPMLE with the DNN model can achieve optimal convergence rates up to \(\log (n)\)-factors.

\subsection{Deep Neural Network model}
Firstly, we mathematically define the deep feedfoward neural network model.
To define the DNN model, we need to specify the activation function $\sigma: \mathbb{R} \to \mathbb{R}$, and here we use the ReLU function defined as $\sigma(x) = \max(x, 0)$.
For $\bm{v} = (v_1, \dots, v_r)^\top \in \mathbb{R}^r$, the activation function with bias $\sigma_{\bm{v}}: \mathbb{R}^r \to \mathbb{R}^r$ is defined as
\begin{equation*}
    \sigma_{\bm{v}}
    \begin{pmatrix}
    y_1 \\
    \vdots \\
    y_r
    \end{pmatrix}
    =
    \begin{pmatrix}
    \sigma(y_1 - v_1) \\
    \vdots \\
    \sigma(y_r - v_r)
    \end{pmatrix}.
\end{equation*}

The structure of the feedforward neural network is determined by a positive integer \(L\), referred to as the number of hidden layers or depth, a width vector \(\bm{m} = (m_0, \dots , m_{L+1}) \in \mathbb{N}^{L+2}\), and the activation function \(\boldsymbol{\psi}\) of the output layer.

A deep neural network with \((L, \bm{m}, \boldsymbol{\psi})\) refers to any function of the form
\begin{equation}
    f: \mathbb{R}^{m_0} \to \mathbb{R}^{m_{L+1}},\; \bm{x} \mapsto f(\bm{x}) = \boldsymbol{\psi}W_L\sigma_{\bm{v}_L}W_{L-1}\sigma_{\bm{v}_{L-1}}\cdots W_1\sigma_{\bm{v}_1}W_0\bm{x}, \label{eq:DNN function}
\end{equation}
where \(W_i \in \mathbb{R}^{m_{i+1} \times m_i}\) is a weight matrix, and \(\bm{v}_i \in \mathbb{R}^{m_i}\) is a bias vector.
For the regression problem, the identity function, denoted by \(\mathrm{id}: \bm{x} \mapsto \bm{x}\), is commonly used as the activation function \(\boldsymbol{\psi}\) for the output layer.
For the classification problem, the softmax function
\begin{equation}
    \boldsymbol{\Psi} : \mathbb{R}^{m_{L+1}} \to \mathbb{R}^{m_{L+1}},\; \bm{x} \mapsto \pr{\frac{e^{x_1}}{\sum_{j=1}^{m_{L+1}}e^{x_j}}, \dots , \frac{e^{x_{m_{L+1}}}}{\sum_{j=1}^{m_{L+1}}e^{x_j}}}
\end{equation}
is frequently employed.
In the multi-class classification setting of this study, we set 
$m_0 = d$, $m_{L+1} = K$, and $\boldsymbol{\psi} = \boldsymbol{\Psi}$.

The space of all networks with parameter magnitudes restricted is denoted as
\begin{equation*}
    \mathcal{F}_{\boldsymbol{\psi}}(L, \bm{m}, B) = \left\{f \text{ of the form \eqref{eq:DNN function}}: \max_{j = 0, \dots , L} \lVert W_j \rVert_\infty \lor \lvert \bm{v}_j \rvert_\infty \leq B\right\},
\end{equation*}
where $\bm{v}_0$ is a vector with all components being 0.
Note that \(\mathcal{F}_{\boldsymbol{\psi}}(L, \bm{m}, B)\) is not subject to any constraints other than the number of layers, width, and the magnitude of parameters. In other words, \(\mathcal{F}_{\boldsymbol{\psi}}(L, \bm{m}, B)\) also includes fully connected (non-sparse) DNNs.
Moreover, we define the space of sparse networks as follows:
\begin{equation*}
    \mathcal{F}_{\boldsymbol{\psi}}(L, \bm{m}, B, s) = \left\{f \in \mathcal{F}_{\boldsymbol{\psi}}(L, \bm{m}, B): \sum_{j=0}^L (\lVert W_j \rVert_0 + \lvert \bm{v}_j \rvert_0) \leq s)\right\}.
\end{equation*}
Here, $\lVert W_j \rVert_0$ and $\lvert \bm{v}_j \rvert_0$ denote the number of non-zero components of $W_j$ and $\bm{v}_j$ respectively.

\subsection{Convergence rates} \label{sec: convergence rates}

In this section, we utilize Corollary \ref{cor:expectation oracle inequality} to derive the specific convergence rates in the Hellinger distance.
The class of the true conditional class probability function $\boldsymbol{\eta}$ is essential for the convergence rates.
We assume that $\boldsymbol{\eta}$ are composition structured functions as introduced in \cite{Schmidt-Hieber20}.
We define composite structured functions as follows.

We assume that each component \(\eta_k, k \in [K]\) of the true conditional class probabilities is a composite function of multiple functions, that is, \(\eta_k = g_r \circ g_{r-1} \circ \cdots \circ g_1 \circ g_0,\)
where \(g_i: [a_i, b_i]^{d_i} \to [a_{i+1}, b_{i+1}]^{d_{i+1}}\).
We denote the components of \(g_i\) as \(g_i = (g_{ij})_{j=1, \dots , d_{i+1}}^\top\), and let \(t_i\) be the maximum number of variables on which each \(g_{ij}\) depends.
Consequently, each \(g_{ij}\) is a \(t_i\)-variate function.

For a given $\beta > 0$ and $D \subset \mathbb{R}^m$, 
the ball of $\beta$-H\"older functions with radius $Q > 0$ is defined as
\begin{equation*}
    C^\beta(D, K) = \left\{f: D \to \mathbb{R}: \sum_{\bm{\alpha}: \lvert \bm{\alpha} \rvert < \beta} \lVert \partial^{\bm{\alpha}} f \rVert_\infty + \sum_{\bm{\alpha}: \lvert \bm{\alpha} \rvert = \lfloor \beta \rfloor} \sup_{\bm{x}, \bm{y} \in D,\; \bm{x} \neq \bm{y}} \frac{\lvert \partial^{\bm{\alpha}}f(\bm{x}) - \partial^{\bm{\alpha}}f(\bm{y}) \rvert}{\lvert \bm{x} - \bm{y} \rvert_\infty^{\beta - \lfloor \beta \rfloor}} \leq Q\right\}.
\end{equation*}

We assume that each function \(g_{ij}\) is a \(\beta\)-Hölder continuous function.
Since \(g_{ij}\) is a \(t_i\)-variate function, it follows that \(g_{ij} \in C^{\beta_i}([a_i, b_i]^{t_i}, Q_i)\) for some \(a_i, b_i\), and \(Q_i\).
Therefore, the function class underlying \(\eta_k\) is given by
\begin{align*}
    \mathcal{G}(r, \bm{d}, \bm{t}, \boldsymbol{\beta}, Q) \coloneqq 
    \big\{\eta_k = g_r \circ \cdots \circ g_0 \mid\, &g_i = (g_{ij})_j: [a_i, b_i]^{d_i} \to [a_{i+1}, b_{i+1}]^{d_{i+1}},
    \\
    &g_{ij} \in C^{\beta_i}([a_i, b_i]^{t_i}, Q), \text{for some} \lvert a_i \rvert, \lvert b_i \rvert \leq Q \big\},
\end{align*}
where \(\bm{d} \coloneqq (d_0, \dots , d_{r+1}),\; \bm{t} \coloneqq (t_0, \dots , t_r),\) and \(\boldsymbol{\beta} \coloneqq (\beta_0, \dots , \beta_r)\).
Consequently, the function class underlying the true conditional class probabilities \(\boldsymbol{\eta}\) is given by
\begin{equation*}
    \mathcal{G}^\prime(r, \bm{d}, \bm{t}, \boldsymbol{\beta}, Q, K) \coloneqq \br{\boldsymbol{\eta} = \pr{\eta_1, \dots , \eta_K}^\top : [0, 1]^d \to \mathcal{S}^K \mid \eta_k \in \mathcal{G}(r, \bm{d}, \bm{t}, \boldsymbol{\beta}, Q), k \in [K]}.
\end{equation*}
To ensure that \(\mathcal{G}^\prime(r, \bm{d}, \bm{t}, \boldsymbol{\beta}, Q, K)\) is non-empty, we assume that \(KQ \geq 1\).
As an important example, the class \(\mathcal{G}^\prime(r, \bm{d}, \bm{t}, \boldsymbol{\beta}, Q, K)\) includes the following generalized additive models:
\begin{equation}
    \eta_k(x_1, \dots , x_d) = \Psi_k\left( \sum_{i=1}^d f_{ki}(x_i) \right), \label{eq: an example of composition structured functions}
\end{equation}
where \(\Psi_k(\bm{x}) = e^{x_k} / \sum_{i=1}^d e^{x_i}\).
In this model, each \(f_{ki}\) can be considered as a ``score'' that reflects how each covariate \(x_i\) influences the confidence that the class is \(k\). 

The following theorem provides a bound on the expected squared Hellinger distance between the true and estimated conditional class probabilities for the nonparametric logistic regression with deep learning.

\begin{theorem} [Convergence rates] \label{thm:convergence rates with non-sparse DNN}
    Consider the K-class classification model \eqref{eq:data-generating model} for the true conditional class probabilities $\boldsymbol{\eta}$ in the class $\mathcal{G}^\prime(r, \bm{d}, \bm{t}, \boldsymbol{\beta}, Q, K)$. Let $\hat{\bm{p}}_n$ be given in \eqref{eq:mle} with $\mathcal{F}_n = \mathcal{F}_{\boldsymbol{\Psi}}(L, \bm{m}, B)$ satisfying the following conditions:
    \[
    \text{\rm (i) }\; L \asymp \log (n),\quad
    \text{\rm (ii) }\; \sqrt{n\phi_n} \asymp \min_{i=1, \dots , L}m_i,\quad
    \text{\rm (iii)}\; B \asymp \max_{i=0, \dots, r} (n\phi_n)^{\frac{2\beta_i + 2}{t_i}}.
    \]
    Then, there exists a constant $C$ depending only on $q, \bm{m}, \bm{t}, \boldsymbol{\beta}, Q, K$ and $\bm{d}$ such that
    \begin{equation*}
        \mathbb{E}_{\mathcal{D}_n}\sqbr{R(\hat{\bm{p}}_n, \boldsymbol{\eta})} \leq C\phi_n\log(n)^3
    \end{equation*}
    for sufficient large \(n\),
    where
    \begin{equation*}
        \beta_i^* := \beta_i\prod_{l=i+1}^r(\beta_{l} \land 1)
        \;\text{ and }\;
        \phi_n := \max_{i=0, \dots , r}n^{-\frac{\beta_i^*}{\beta_i^* + t_i}}.
    \end{equation*}
\end{theorem}

\begin{remark}
    If a DNN is chosen as the conditions of Theorem \ref{thm:convergence rates with non-sparse DNN}, Assumption \ref{assumption} for Corollary \ref{cor:expectation oracle inequality} is automatically satisfied (for details, see the proof of Theorem \ref{thm:convergence rates with non-sparse DNN} in Appendix \ref{app: proof of the convergence rate}).
\end{remark}
\begin{remark}
    The convergence rates of the non-sparse DNN model presented in Theorem \ref{thm:convergence rates with non-sparse DNN} are the same as those of the sparse DNN model in \cite{BosSchmidt-Hieber22}. Since sparse feedforward neural networks are not commonly used in practice, in this sense, our results can be considered closer to a more realistic setting.
\end{remark}
\begin{remark}
    It has been demonstrated in \cite{BosSchmidt-Hieber22} that imposing the SVB (Small Value Bound) condition in Definition \ref{def: small value bound} can lead to faster convergence rates. 
    However, verifying that the SVB condition holds in practice is challenging. 
    Therefore, in this study, we focus on analyzing the convergence rates without imposing additional restrictive conditions.
    On the other hand, imposing the SVB condition can lead to faster convergence rates, even in our analysis. 
    Specifically, under the \(\alpha\)-SVB assumption, the convergence rate of the maximum likelihood estimator using deep learning becomes \(\max_{i = 0, \dots, r} n^{-(1 + \alpha)\beta_i^* / ((1 + \alpha)\beta_i^* + t_i)}\). 
    These convergence rates are the same as shown in \cite{BosSchmidt-Hieber22}. 
    Here, we note that the SVB condition with $\alpha = 0$ always holds.
    Substituting $\alpha = 0$ into the convergence rates results in the same rates as that obtained when the SVB condition is not imposed.
\end{remark}
\begin{remark} \label{rem:convergence rates for K}
    In practical classification problems, the number of classes \(K\) is usually fixed.
    Then, it has been included in the constant \(C\) in the bound of Theorem \ref{thm:convergence rates with non-sparse DNN}.
    The convergence rates including \(K\) in Theorem \ref{thm:convergence rates with non-sparse DNN} are
    \begin{equation*}
        \max_{i=0, \dots , r}K^{\frac{2\beta_i^* + 3t_i}{\beta_i^* + t_i}}n^{-\frac{\beta_i^*}{\beta_i^* + t_i}}.
    \end{equation*}
    These convergence rates are slightly slower with respect to \(K\) compared to those of \cite{BosSchmidt-Hieber22}, but since it uses a non-sparse network, it is a more general setting.
    However, as discussed later, these convergence rates are not the minimax optimal rates with respect to \(K\).
\end{remark}

\begin{remark}
    Since Corollary \ref{cor:expectation oracle inequality} does not assume any specific structure for the true function class, it is applicable not only to the composite structured functions but also to other function classes.
    If one can verify that Assumption \ref{assumption} is satisfied for these classes and evaluate the approximation error \(R\pr{\Tilde{\bm{p}}_n, \boldsymbol{\eta}}\), it is possible to derive convergence rates for DNNs in other settings, similar to Theorem \ref{thm:convergence rates with non-sparse DNN}.
\end{remark}

Theorem \ref{thm:convergence rates with non-sparse DNN} indicates that the convergence rates do not directly depend on the dimensionality $d$ of the input space.
Therefore, when $t_{i^*} < d$ for $i^* = \argmax_{i=0, \dots , r}n^{-\beta_i^* /(\beta_i^* + t_i)}$, deep learning can avoid the curse of dimensionality even in the classification problem.
Indeed, considering the model (\ref{eq: an example of composition structured functions}), if we assume that for each $k$ and $i$, \(f_{ki} \in \mathcal{C}^{\beta}([0, 1], Q)\), then by defining \(\bm{g}_{k0}(\bm{x}) = (f_{k1}(x_1), \dots, f_{kd}(x_d))^\top\) and \(g_{k1}(\bm{y}) = \sum_{j=1}^d y_j\), we can express \(\eta_k\) as \(\eta_k = \Psi_k \circ g_{k1} \circ \bm{g}_{k0}\).
For any \(\gamma > 1\), it holds that \(g_{k1} \in \mathcal{C}^{\gamma}([-Q, Q]^d, (Q + 1)d)\) and \(\Psi_k \in \mathcal{C}^{\gamma}([-Qd, Qd], V)\), where \(V\) is a sufficiently large constant depending on \(\gamma\).
Thus, we have
\begin{equation*}
    \bm{\eta} \in \mathcal{G}^\prime(2, (d, d, 1, 1), (1, d, 1), (\beta, (\beta \lor 1)d, 1), (Q + 1)d \lor V, K).
\end{equation*}
By Theorem \ref{thm:convergence rates with non-sparse DNN}, the convergence rates of the maximum likelihood estimator using deep learning with respect to the Hellinger distance are \(n^{-\beta / (\beta + 1)}\). Notably, these rates do not depend on the input dimension \(d\), indicating that the curse of dimensionality is avoided.

\subsection{Minimax Optimality}
In this section, we demonstrate that the convergence rates of the estimator obtained in the previous section are minimax optimal, up to a logarithmic factor. The following theorem presents this result.

\begin{theorem} \label{thm:minimax}
    Consider the K-class classification model \eqref{eq:data-generating model} with \(X_i\) drawn from a distribution with Lebesgue density on \([0,1]^d\), which is lower and upper bounded by positive constants. For any nonnegative integer \(r\), any dimension vectors \(\bm{d}\) and \(\bm{t}\) satisfying \(t_i \leq \min(d_0 , \cdots , d_{i-1})\) for all \(i\), any smoothness vector \(\boldsymbol\beta\), and all sufficiently large constants \(Q > 0\), there exists a positive constant \(c\) such that
    \begin{equation*}
        \inf_{\hat{\bm{p}}_n}\sup_{\boldsymbol{\eta} \in \mathcal{G}^\prime\pr{r, \bm{d}, \bm{t}, \boldsymbol \beta, Q, K}} \mathbb{E}_{\mathcal{D}_n}\left[R\pr{\hat{\bm{p}}_n, \boldsymbol{\eta}}\right] \geq c\phi_n,
    \end{equation*}
    where infimum is taken over all estimators.
\end{theorem}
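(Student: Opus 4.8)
The plan is to establish the minimax lower bound via the standard reduction to a multiple hypothesis testing problem, following the scheme used in \cite{Schmidt-Hieber20} and adapted to the classification setting in \cite{BosSchmidt-Hieber22}. First I would reduce to the ``hardest'' composition index: fix $i^* \in \argmax_{i=0,\dots,r} n^{-\beta_i^*/(\beta_i^*+t_i)}$ so that $\phi_n \asymp n^{-\beta_{i^*}^*/(\beta_{i^*}^*+t_{i^*})}$, and construct a finite subfamily of $\mathcal{G}'(r,\bm{d},\bm{t},\bm{\beta},Q,K)$ by perturbing only the $g_{i^*}$ layer while keeping the remaining layers fixed to simple (e.g.\ coordinate-projection or affine) maps. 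The perturbations are built from a standard bump construction: take a smooth compactly supported $\psi$, tile $[0,1]^{t_{i^*}}$ into $M^{t_{i^*}}$ cells of side $1/M$, and for each sign vector $\bm{\omega}\in\{0,1\}^{M^{t_{i^*}}}$ form $g_{i^*,\bm{\omega}} = g_{i^*}^0 + \rho M^{-\beta_{i^*}} \sum_j \omega_j \psi(M(\cdot - x_j))$, with $\rho$ a small constant ensuring the $C^{\beta_{i^*}}$-ball membership with radius $Q$. Composing through the later layers, which are $(\beta_l\wedge 1)$-Lipschitz-type contractions, shows that the induced perturbation in $\eta_k$ has sup-norm $\asymp M^{-\beta_{i^*}^*}$, and one must check the simplex constraint $\sum_k \eta_k = 1$ is maintained (e.g.\ perturb $\eta_1$ up and $\eta_2$ down by the same amount, or normalize), which is why $KQ\ge 1$ and ``$Q$ sufficiently large'' are needed.

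Next I would lower-bound the risk separation. For two members of the family differing in a single cell $j$, the squared Hellinger distance between the conditional class-probability vectors, integrated against $P_{\bm X}$ (whose density is bounded below), is of order $M^{-t_{i^*}}\cdot(M^{-\beta_{i^*}^*})^2 = M^{-(2\beta_{i^*}^*+t_{i^*})}$ on that cell, and the number of differing cells between two hypotheses at Hamming distance $\asymp M^{t_{i^*}}$ gives a total separation $\asymp M^{-2\beta_{i^*}^*}$ after the Varshamov--Gilbert pruning of $\{0,1\}^{M^{t_{i^*}}}$. Then I would bound the information: the KL divergence between the $n$-fold product laws $P_{\bm\omega}^{\otimes n}$ and $P_{\bm\omega'}^{\otimes n}$ is controlled, using the local boundedness of the $\eta_k$ away from $0$ and $1$ on the support of the bumps (guaranteed by taking $g^0$ in the interior), by $n$ times the squared $L^2$-distance of the conditional probabilities, i.e.\ $\asymp n M^{-(2\beta_{i^*}^*+t_{i^*})}$ per differing cell, hence $\asymp n M^{-2\beta_{i^*}^*}$ along the Varshamov--Gilbert net. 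Choosing $M \asymp n^{1/(2\beta_{i^*}^*+t_{i^*})}$ balances these so that the pairwise KL stays below a fraction of $\log|\text{net}| \asymp M^{t_{i^*}}$, and Fano's inequality (or Tsybakov's version for Hamming-type losses) yields $\inf_{\hat{\bm p}_n}\sup \mathbb{E}[R(\hat{\bm p}_n,\bm\eta)] \gtrsim M^{-2\beta_{i^*}^*} \asymp n^{-2\beta_{i^*}^*/(2\beta_{i^*}^*+t_{i^*})}$. A short algebraic check confirms $2\beta_{i^*}^*/(2\beta_{i^*}^*+t_{i^*}) \ge \beta_{i^*}^*/(\beta_{i^*}^*+t_{i^*})$, so in particular this is $\gtrsim \phi_n$, which is all that is claimed; alternatively one runs the two-point (Le Cam) version directly to get exactly the $\phi_n$ rate when only a rate, not a sharp constant, is wanted.

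The main obstacle I anticipate is the bookkeeping needed to propagate a $C^{\beta_{i^*}}$ perturbation in the middle layer through the subsequent compositions while \emph{simultaneously} (a) keeping every $g_l$ inside its Hölder ball of the prescribed radius $Q$, (b) keeping the overall output in the simplex $\mathcal{S}^K$ with all coordinates bounded strictly inside $(0,1)$ on the relevant region (so that the Hellinger-to-$L^2$ and KL-to-$L^2$ comparisons are two-sided with constants independent of $n$), and (c) getting the exponent $\beta_{i^*}^* = \beta_{i^*}\prod_{l>i^*}(\beta_l\wedge 1)$ to emerge correctly from the chain of Hölder-composition estimates. This is exactly the delicate part of the construction in \cite{Schmidt-Hieber20} and \cite{BosSchmidt-Hieber22}, and I would handle it by choosing the fixed layers $g_l^0$ for $l\ne i^*$ to be coordinate projections onto a $t_l$-subset composed with a fixed smooth contraction into a compact interval bounded away from the simplex boundary, which makes each composition estimate a one-line application of the Hölder-composition lemma. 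The remaining steps — Varshamov--Gilbert, the Hellinger computation on a single cell, and Fano — are routine.
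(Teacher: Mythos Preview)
Your construction has a genuine gap that prevents it from delivering the claimed rate $\phi_n$. By placing the baseline $g^0$ strictly inside the simplex so that all $\eta_k$ are bounded away from $0$ and $1$, you force the squared Hellinger distance and the KL divergence to behave like the squared $L^2$ distance. Your own computation then gives a per-cell separation $M^{-(2\beta_{i^*}^*+t_{i^*})}$, total Hellinger separation $M^{-2\beta_{i^*}^*}$, and after balancing against the KL budget you obtain the lower bound $n^{-2\beta_{i^*}^*/(2\beta_{i^*}^*+t_{i^*})}$. Your ``short algebraic check'' is inverted: you correctly note that $2\beta_{i^*}^*/(2\beta_{i^*}^*+t_{i^*}) \ge \beta_{i^*}^*/(\beta_{i^*}^*+t_{i^*})$, but a larger exponent in $n^{-(\cdot)}$ means a \emph{smaller} quantity, so your bound is \emph{strictly below} $\phi_n$ whenever $t_{i^*}>0$. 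The two-point Le Cam alternative you mention does not help, since it uses the same interior baseline and inherits the same $L^2$-type scaling.

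What the paper does differently is precisely to put the hypotheses at the boundary of the simplex. It sets $\bm p_W=(f_{\bm w_1},\dots,f_{\bm w_{K-1}},\,1-\sum_k f_{\bm w_k})$ with $f_{\bm w_k}=\sum_{\bm u} w_{k,\bm u}\psi_{\bm u}^B$, so on any cell where $w_{k,\bm u}=0$ the $k$-th component is exactly zero. The Hellinger increment on that cell is then $(\sqrt{0}-\sqrt{\psi_{\bm u}^B})^2=\psi_{\bm u}^B$, i.e.\ of order $h_n^{\beta^{**}}$ rather than $h_n^{2\beta^{**}}$; integrated over the cell this gives $h_n^{\beta^{**}+t^*}$, and after Varshamov--Gilbert the separation is $\asymp h_n^{\beta^{**}}$. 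For the KL side one takes the reference measure to be the all-ones configuration $W=(\bm 1,\dots,\bm 1)$, so the denominator $\sum_{\bm u}\psi_{\bm u}^B$ never vanishes, and the $\chi^2$-type bound yields KL of order $n(K-1)m_n^{t^*}h_n^{\beta^{**}+t^*}$. Choosing $m_n\asymp n^{1/(\beta^{**}+t^*)}$ then gives the lower bound $h_n^{\beta^{**}}\asymp \phi_n$. In short, the ``hard'' alternatives for the Hellinger risk are not interior perturbations but configurations where some class probabilities vanish; your requirement (b) that the coordinates stay bounded strictly inside $(0,1)$ is exactly what must be dropped.
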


From this theorem, it follows that the NPMLE using DNNs achieves the minimax optimal convergence rate, up to a logarithmic factor when the underlying function class of the true conditional class probabilities is \(\mathcal{G}^\prime\pr{r, \bm{d}, \bm{t}, \boldsymbol \beta, Q, K}\).

\begin{remark}
    As mentioned in Remark \ref{rem:convergence rates for K}, we consider the number of classes \(K\) as a fixed constant.
    Therefore, Theorem \ref{thm:minimax} does not provide the minimax optimal convergence rate with respect to \(K\).
    In fact, for the case of \(r = 0\), where \(\mathcal{G}^\prime\) can be viewed as a Hölder class, the minimax convergence rate is \(K^{2d_0 / (d_0 + \beta_0)}n^{-\beta_0 / (d_0 + \beta_0)}\) according to Example 3 in \cite{Bilodeau2023}.
    However, the lower bound of the minimax convergence rate shown in Theorem \ref{thm:minimax} is \(K^{-1}n^{-\beta_0 / (d_0 + \beta_0)}\).
    In this study, we do not conduct a more detailed analysis for minimax lower bounds as the number of classes \(K\) is not emphasized. 
    However, there is a possibility that using the framework from \cite{Bilodeau2023} could allow for the demonstration of a more refined minimax lower bound for \(K\).
\end{remark}

\begin{appendix}
\setcounter{theorem}{0}
\section{Proofs}
In this section, we provide the proofs for the theorems in the main text.
Throughout this section, \(P_n\) represents the empirical distribution of the data \(\mathcal{D}_n\), and we use the shorthand notation \(Qf = \int f dQ\) for measurable functions \(f\) and measures \(Q\).

\subsection{Proof of Theorem \ref{thm:probability oracle inequality}} \label{app: proof of the oracle inequality}
In this section, we prove Theorem \ref{thm:probability oracle inequality}.
To achieve this, we present several lemmas.
The key lemma for the proof of Theorem \ref{thm:probability oracle inequality} is the following.

\begin{lemma}[Basic Inequality] \label{lem:basic inequality}
    For MLE \(\hat{\bm{p}}_n\) in \eqref{eq:mle} and any conditional class probabilities \(\Tilde{\bm{p}}_n \in \mathcal{F}_n\), we have
    \begin{equation*}
        R\left(\frac{\hat{\bm{p}}_n + \Tilde{\bm{p}}_n}{2}, \Tilde{\bm{p}}_n\right) \leq (P_n - P)\pr{\frac{1}{2}\bm{Y}^\top \log\pr{\frac{\hat{\bm{p}}_n + \Tilde{\bm{p}}_n}{2\Tilde{\bm{p}}_{n}}\mathbbm{1}(\Tilde{\bm{p}}_n > 0)}} + 2(1 + c_0)\sqrt{R\pr{\frac{\hat{\bm{p}}_n + \Tilde{\bm{p}}_n}{2}, \Tilde{\bm{p}}_n}R\pr{\Tilde{\bm{p}}_n, \boldsymbol{\eta}}}.
    \end{equation*}
\end{lemma}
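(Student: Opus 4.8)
The plan is to run van de Geer's ``basic inequality'' argument for the NPMLE, keeping the Hellinger geometry explicit throughout so that the constant $2(1+c_0)$ comes out as stated. Abbreviate $\bar{\bm{p}} := (\hat{\bm{p}}_n + \Tilde{\bm{p}}_n)/2$ and recall the identity $H^2(\bm{a},\bm{b}) = 1 - \sum_k \sqrt{a_k b_k}$ for probability vectors $\bm{a},\bm{b}$. I would start from the optimality of the MLE: since $\Tilde{\bm{p}}_n \in \mathcal{F}_n$ we have $L(\hat{\bm{p}}_n) \le L(\Tilde{\bm{p}}_n)$, which rearranges to $P_n\big(\bm{Y}^\top \log(\hat{\bm{p}}_n/\Tilde{\bm{p}}_n)\big) \ge 0$. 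Concavity of $\log$ yields the coordinatewise bound $\log(\bar p_k/\Tilde p_{n,k}) \ge \tfrac{1}{2}\log(\hat p_{n,k}/\Tilde p_{n,k})$, so in particular $P_n\big(\tfrac{1}{2}\bm{Y}^\top\log(\bar{\bm{p}}/\Tilde{\bm{p}}_n)\big) \ge \tfrac{1}{4}P_n\big(\bm{Y}^\top\log(\hat{\bm{p}}_n/\Tilde{\bm{p}}_n)\big) \ge 0$. Writing $P_n = (P_n-P) + P$ and using $P(\bm{Y}^\top\bm{g}) = \mathbb{E}_{\bm{X}}[\boldsymbol{\eta}^\top\bm{g}]$, this becomes
\[
\tfrac{1}{2}\,\mathbb{E}_{\bm{X}}\!\left[\boldsymbol{\eta}^\top \log\frac{\Tilde{\bm{p}}_n}{\bar{\bm{p}}}\right] \;\le\; (P_n-P)\!\left(\tfrac{1}{2}\bm{Y}^\top \log\frac{\bar{\bm{p}}}{\Tilde{\bm{p}}_n}\right),
\]
so it suffices to show $\tfrac{1}{2}\mathbb{E}_{\bm{X}}[\boldsymbol{\eta}^\top\log(\Tilde{\bm{p}}_n/\bar{\bm{p}})] \ge R(\bar{\bm{p}},\Tilde{\bm{p}}_n) - 2(1+c_0)\sqrt{R(\bar{\bm{p}},\Tilde{\bm{p}}_n)\,R(\Tilde{\bm{p}}_n,\boldsymbol{\eta})}$.

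This last bound I would establish pointwise in $\bm{x}$ and then integrate. The scalar inequality $\log v \ge 1 - 1/v$ (valid for all $v>0$), applied with $v = \sqrt{\Tilde p_{n,k}/\bar p_k}$, gives $\tfrac{1}{2}\log(\Tilde p_{n,k}/\bar p_k) \ge 1 - \sqrt{\bar p_k/\Tilde p_{n,k}}$, hence $\tfrac{1}{2}\boldsymbol{\eta}^\top\log(\Tilde{\bm{p}}_n/\bar{\bm{p}}) \ge 1 - \sum_k \eta_k\sqrt{\bar p_k/\Tilde p_{n,k}}$. The key step is to write $\eta_k\sqrt{\bar p_k/\Tilde p_{n,k}} = \sqrt{\bar p_k\Tilde p_{n,k}} + (\eta_k - \Tilde p_{n,k})\sqrt{\bar p_k/\Tilde p_{n,k}}$ and then exploit $\sum_k(\eta_k - \Tilde p_{n,k}) = 0$ to replace the last factor by $\sqrt{\bar p_k/\Tilde p_{n,k}} - 1 = (\sqrt{\bar p_k} - \sqrt{\Tilde p_{n,k}})/\sqrt{\Tilde p_{n,k}}$. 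Factoring $\eta_k - \Tilde p_{n,k} = (\sqrt{\eta_k} - \sqrt{\Tilde p_{n,k}})(\sqrt{\eta_k} + \sqrt{\Tilde p_{n,k}})$, the correction term equals $\sum_k (\sqrt{\eta_k} - \sqrt{\Tilde p_{n,k}})(\sqrt{\bar p_k} - \sqrt{\Tilde p_{n,k}})(1 + \sqrt{\eta_k/\Tilde p_{n,k}})$, which, since Assumption~\ref{assumption} gives $\sqrt{\eta_k/\Tilde p_{n,k}} \le c_0$, is bounded in absolute value by Cauchy--Schwarz by $2(1+c_0)H(\boldsymbol{\eta},\Tilde{\bm{p}}_n)H(\bar{\bm{p}},\Tilde{\bm{p}}_n)$. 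Combined with $\sum_k\sqrt{\bar p_k\Tilde p_{n,k}} = 1 - H^2(\bar{\bm{p}},\Tilde{\bm{p}}_n)$, this gives the pointwise estimate $\tfrac{1}{2}\boldsymbol{\eta}^\top\log(\Tilde{\bm{p}}_n/\bar{\bm{p}}) \ge H^2(\bar{\bm{p}},\Tilde{\bm{p}}_n) - 2(1+c_0)H(\boldsymbol{\eta},\Tilde{\bm{p}}_n)H(\bar{\bm{p}},\Tilde{\bm{p}}_n)$; taking $\mathbb{E}_{\bm{X}}$ and applying Cauchy--Schwarz once more to $\mathbb{E}_{\bm{X}}[H(\boldsymbol{\eta},\Tilde{\bm{p}}_n)H(\bar{\bm{p}},\Tilde{\bm{p}}_n)] \le \sqrt{R(\bar{\bm{p}},\Tilde{\bm{p}}_n)R(\Tilde{\bm{p}}_n,\boldsymbol{\eta})}$ finishes the argument.

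The only point that needs genuine care --- and the reason the indicator $\mathbbm{1}(\Tilde{\bm{p}}_n>0)$ appears in the statement --- is the set of coordinates where $\Tilde p_{n,k}(\bm{x}) = 0$. Assumption~\ref{assumption} is used here a second time: $\eta_k(\bm{x}) \le c_0^2\,\Tilde p_{n,k}(\bm{x})$ forces $\eta_k(\bm{x}) = 0$ on such coordinates, so they contribute nothing to $\boldsymbol{\eta}^\top\log(\cdot)$, to $H^2(\boldsymbol{\eta},\Tilde{\bm{p}}_n)$, or to $\sum_k\sqrt{\bar p_k\Tilde p_{n,k}}$; moreover $\mathbb{P}(Y_k = 1 \mid \bm{X}) = \eta_k(\bm{X})$ implies $Y_k = 0$ almost surely there, which makes $L(\Tilde{\bm{p}}_n)$ (hence $L(\hat{\bm{p}}_n)$) finite almost surely and lets one insert or remove the indicator in the empirical average without changing its value. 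I expect the remaining manipulations to be routine; the one genuine trick is the use of $\sum_k(\eta_k - \Tilde p_{n,k}) = 0$, without which the correction term would only yield a lone factor $H(\boldsymbol{\eta},\Tilde{\bm{p}}_n)$ rather than the product $H(\boldsymbol{\eta},\Tilde{\bm{p}}_n)H(\bar{\bm{p}},\Tilde{\bm{p}}_n)$ that the oracle-type form requires.
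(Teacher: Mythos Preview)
Your proposal is correct and follows essentially the same route as the paper's proof: start from the MLE optimality, use concavity of $\log$ to pass from $\hat{\bm{p}}_n/\Tilde{\bm{p}}_n$ to $\bar{\bm{p}}/\Tilde{\bm{p}}_n$, apply $\log x \le x-1$ (your $\log v \ge 1-1/v$ is the same inequality), split $\eta_k = \Tilde p_{n,k} + (\eta_k - \Tilde p_{n,k})$, factor the correction term as $(\sqrt{\Tilde p_{n,k}}-\sqrt{\bar p_k})(\sqrt{\eta_k}-\sqrt{\Tilde p_{n,k}})(1+\sqrt{\eta_k/\Tilde p_{n,k}})$, bound via Assumption~\ref{assumption} and Cauchy--Schwarz pointwise, then integrate and apply Cauchy--Schwarz once more. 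Your ``trick'' of using $\sum_k(\eta_k-\Tilde p_{n,k})=0$ is just an algebraic reshuffling of the paper's direct split of $\sum_k(1-\sqrt{\bar p_k/\Tilde p_{n,k}})\eta_k$; the resulting expressions are identical.
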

\begin{proof}
    Using the concavity of the logarithm, for any \(\bm{x} \in \mathcal{X}\), we have
    \begin{equation*}
        \frac{1}{2}\log\left(\frac{\hat{\bm{p}}_n(\bm{x}) + \Tilde{\bm{p}}_{n}(\bm{x})}{2\Tilde{\bm{p}}_n(\bm{x})}\right)\mathbbm{1}(\Tilde{\bm{p}}_n > 0) \geq \frac{1}{4}\log\left(\frac{\hat{\bm{p}}_n(\bm{x})}{\Tilde{\bm{p}}_n(\bm{x})}\right)\mathbbm{1}(\Tilde{\bm{p}}_n > 0).
    \end{equation*}
    Combining this with the fact that \(\hat{\bm{p}}_n\) is the MLE, we get
    \begin{align*}
        0 &\leq P_n\pr{\frac{1}{4}\bm{Y}^\top \log\pr{\frac{\hat{\bm{p}}_n}{\Tilde{\bm{p}}_n}}\mathbbm{1}(\Tilde{\bm{p}}_n > 0)} 
        \leq P_n\pr{\frac{1}{2}\bm{Y}^\top \log\pr{\frac{\hat{\bm{p}}_n + \Tilde{\bm{p}}_n}{2\Tilde{\bm{p}}_n}}\mathbbm{1}(\Tilde{\bm{p}}_n > 0)} \\
        &= (P_n - P)\pr{\frac{1}{2}\bm{Y}^\top \log\pr{\frac{\hat{\bm{p}}_n + \Tilde{\bm{p}}_n}{2\Tilde{\bm{p}}_n}}\mathbbm{1}(\Tilde{\bm{p}}_n > 0)} +  P\pr{\frac{1}{2}\bm{Y}^\top \log\pr{\frac{\hat{\bm{p}}_n + \Tilde{\bm{p}}_n}{2\Tilde{\bm{p}}_n}}\mathbbm{1}(\Tilde{\bm{p}}_n > 0)} \\
        &\leq (P_n - P)\pr{\frac{1}{2}\bm{Y}^\top \log\pr{\frac{\hat{\bm{p}}_n + \Tilde{\bm{p}}_n}{2\Tilde{\bm{p}}_n}}\mathbbm{1}(\Tilde{\bm{p}}_n > 0)} - P\pr{\bm{Y}^\top\pr{1 - \sqrt{\frac{\hat{\bm{p}}_n + \Tilde{\bm{p}}_n}{2\Tilde{\bm{p}}_n}}}\mathbbm{1}(\Tilde{\bm{p}}_n > 0)},
    \end{align*}
    where we used \(\log(x) \leq x - 1\) in the last step.
    Now,
    \begin{align*}
        &P\pr{\bm{Y}^\top\pr{1 - \sqrt{\frac{\hat{\bm{p}}_n + \Tilde{\bm{p}}_n}{2\Tilde{\bm{p}}_n}}\mathbbm{1}(\Tilde{\bm{p}}_n > 0)}} 
        = \int_\mathcal{X}\int_\mathcal{Y} \bm{y}^\top\pr{1 - \sqrt{\frac{\hat{\bm{p}}_n(\bm{x}) + \Tilde{\bm{p}}_n(\bm{x})}{2\Tilde{\bm{p}}_n(\bm{x})}}\mathbbm{1}(\Tilde{\bm{p}}_n > 0)} dP(\bm{y} \mid \bm{x}) dP_X(\bm{x}) \\
        &= \int_\mathcal{X} \sum_{k=1}^K\pr{1 - \sqrt{\frac{\hat{p}_{n, k}(\bm{x}) + \Tilde{p}_{n, k}(\bm{x})}{2\Tilde{p}_{n, k}(\bm{x})}}}\Tilde{p}_{n, k}(\bm{x})\mathbbm{1}(\Tilde{\bm{p}}_n > 0) dP_X(\bm{x}) \\
        &\quad+ \int_\mathcal{X} \sum_{k=1}^K\pr{1 - \sqrt{\frac{\hat{p}_{n, k}(\bm{x}) + \Tilde{p}_{n, k}(\bm{x})}{2\Tilde{p}_{n, k}(\bm{x})}}}\pr{\eta_k(\bm{x}) - \Tilde{p}_{n, k}(\bm{x})}\mathbbm{1}(\Tilde{\bm{p}}_n > 0) dP_X(\bm{x}) \\
        &= \int_\mathcal{X} H^2\pr{\frac{\hat{\bm{p}}_n + \Tilde{\bm{p}}_n}{2}, \Tilde{\bm{p}}_n} dP_X(\bm{x}) \\
        &\quad+ \int_\mathcal{X} \sum_{k=1}^K\pr{\sqrt{\Tilde{p}_{n, k}} - \sqrt{\frac{\hat{p}_{n, k} + \Tilde{p}_{n, k}}{2}}}\pr{\sqrt{\eta_k} - \sqrt{\Tilde{p}_{n, k}}}\pr{1 + \sqrt{\frac{\eta_k}{\Tilde{p}_{n, k}}}}\mathbbm{1}(\Tilde{\bm{p}}_n > 0) dP_X(\bm{x}) \\
        &\geq R\pr{\frac{\hat{\bm{p}}_n + \Tilde{\bm{p}}_n}{2}, \Tilde{\bm{p}}_n} -2(1 + c_0)\int_\mathcal{X} H\pr{\frac{\hat{\bm{p}}_n + \Tilde{\bm{p}}_n}{2}, \Tilde{\bm{p}}_n}H\pr{\Tilde{\bm{p}}_n, \boldsymbol{\eta}} dP_X(\bm{x}) \\
        &\geq R\pr{\frac{\hat{\bm{p}}_n + \Tilde{\bm{p}}_n}{2}, \Tilde{\bm{p}}_n} - 2(1 + c_0)\sqrt{R\pr{\frac{\hat{\bm{p}}_n + \Tilde{\bm{p}}_n}{2}, \Tilde{\bm{p}}_n}R\pr{\Tilde{\bm{p}}_n, \boldsymbol{\eta}}},
    \end{align*}
    where we used the Cauchy-Schwarz inequality in the first inequality and H\"older's inequality in the last inequalities.
    This completes the proof.
\end{proof}

\begin{lemma}\label{lem:excess risk of convex combination}
    For all conditional class probabilities \(\bm{p}, \bm{q}\), we have
    \begin{equation*}
        R(\bm{p}, \bm{q}) \leq 16R\left(\frac{\bm{p} + \bm{q}}{2}, \bm{q}\right).
    \end{equation*}
\end{lemma}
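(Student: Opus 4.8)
The plan is to reduce the inequality to an elementary pointwise scalar estimate. Recall that for probability vectors $\bm a,\bm b\in\mathcal S^K$ the squared Hellinger distance, computed with the counting measure on $[K]$ as dominating measure, is $H^2(\bm a,\bm b)=\frac12\sum_{k=1}^K\big(\sqrt{a_k}-\sqrt{b_k}\big)^2$, so that $R(\bm p,\bm q)=\frac12\,\mathbb E_{\bm X}\big[\sum_{k=1}^K\big(\sqrt{p_k(\bm X)}-\sqrt{q_k(\bm X)}\big)^2\big]$; note also that $(\bm p+\bm q)/2$ is again $\mathcal S^K$-valued, so $R\big((\bm p+\bm q)/2,\bm q\big)$ is well defined. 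Consequently it suffices to prove that
\[
\big(\sqrt x-\sqrt z\big)^2\le 16\Big(\sqrt{\tfrac{x+z}{2}}-\sqrt z\Big)^2\qquad\text{for all }x,z\ge 0,
\]
and then sum over $k$ and take $\tfrac12\mathbb E_{\bm X}$ with $x=p_k(\bm X)$, $z=q_k(\bm X)$.

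To establish the scalar inequality I would set $a=\sqrt x$, $b=\sqrt z$, and $c=\sqrt{(a^2+b^2)/2}$ (the quadratic mean of $a$ and $b$), so the goal becomes $|a-b|\le 4|c-b|$, and split into two cases according to the sign of $a-b$. If $a\ge b$, the QM--AM inequality gives $c\ge(a+b)/2$, hence $0\le c-b$ and $a-b\le 2(c-b)$, which already yields $(a-b)^2\le 4(c-b)^2\le 16(c-b)^2$. If $a<b$ then $b>0$; writing $a=(1-t)b$ with $t\in(0,1]$ gives $c=b\sqrt{1-t+t^2/2}$, and the required bound $b-c\ge(b-a)/4=tb/4$ is equivalent to $\sqrt{1-t+t^2/2}\le 1-t/4$, which upon squaring (the right-hand side being nonnegative) reduces to $7t/16\le 1/2$, i.e.\ $t\le 8/7$ — true since $t\le 1$. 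In both cases $(a-b)^2\le 16(c-b)^2$, which is the desired scalar inequality.

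There is no genuine obstacle here: the only slightly delicate point is the case $a<b$, and it becomes a one-line algebraic check after the substitution $a=(1-t)b$. I would remark in passing that the constant $16$ is not sharp — the extreme case $a=0$ gives ratio $(1-1/\sqrt2)^{-2}\approx 11.7$ — but since $16$ is the value needed later in the proof of Theorem \ref{thm:probability oracle inequality}, I would keep it as stated.
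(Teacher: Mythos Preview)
Your proof is correct. Both you and the paper reduce the claim to the same pointwise scalar inequality
\[
\big(\sqrt{x}-\sqrt{z}\big)^2 \le 16\Big(\sqrt{\tfrac{x+z}{2}}-\sqrt{z}\Big)^2,\qquad x,z\ge 0,
\]
and then sum over $k$ and take $\mathbb E_{\bm X}$. The difference lies in how that scalar inequality is proved. You split into cases $a\ge b$ and $a<b$, using the QM--AM inequality in the first case and a direct substitution $a=(1-t)b$ in the second. The paper instead gives a single unified argument: writing $c=\sqrt{(a^2+b^2)/2}$ and using the difference-of-squares identity $|a-b|(a+b)=|a^2-b^2|=2|c^2-b^2|=2(c+b)|c-b|$, one obtains $|a-b|=2\dfrac{c+b}{a+b}\,|c-b|$, and it remains to show $(c+b)^2\le 4(a^2+b^2)$, which follows from $2bc\le b^2+c^2$. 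The paper's route avoids the case distinction and makes the factor~$4$ (hence $16$ after squaring) emerge transparently from the identity; your route is slightly longer but entirely elementary and, as you observe, reveals that the constant is not sharp. Either argument is perfectly adequate here.
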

\begin{proof}
    For any \(k \in [K]\), we have
    \begin{align*}
        \left(\sqrt{\frac{p_k(\bm{x}) + q_k(\bm{x})}{2}} + \sqrt{q_k(\bm{x})}\right)^2 &= \frac{p_k(\bm{x}) + q_k(\bm{x})}{2} + q_k(\bm{x}) + 2\sqrt{q_k(\bm{x})}\sqrt{\frac{p_k(\bm{x}) + q_k(\bm{x})}{2}} \\
        &\leq \frac{p_k(\bm{x}) + q_k(\bm{x})}{2} + q_k(\bm{x}) + q_k(\bm{x}) + \frac{p_k(\bm{x}) + q_k(\bm{x})}{2} \\
        &\leq 4\left(p_k(\bm{x}) + q_k(\bm{x})\right).
    \end{align*}
    The inequality in the second line follows from \(a^2 + b^2 \geq 2ab\). Taking square roots on both sides, we get
    \begin{equation*}
        \sqrt{\frac{p_k(\bm{x}) + q_k(\bm{x})}{2}} + \sqrt{q_k(\bm{x})} 
        \leq 2\left\{\sqrt{p_k(\bm{x})} + \sqrt{q_k(\bm{x})}\right\}.
    \end{equation*}
    Therefore, we have
    \begin{align*}
        \left\vert\sqrt{p_k(\bm{x})} - \sqrt{q_k(\bm{x})}\right\vert 
        &= \frac{\left\vert p_k(\bm{x}) - q_k(\bm{x})\right\vert}{\sqrt{p_k(\bm{x})} + \sqrt{q_k(\bm{x})}} 
        = 2\frac{\sqrt{\frac{p_k(\bm{x}) + q_k(\bm{x})}{2}} + \sqrt{q_k(\bm{x})}}{\sqrt{p_k(\bm{x})} + \sqrt{q_k(\bm{x})}} \left\vert\sqrt{\frac{p_k(\bm{x}) + q_k(\bm{x})}{2}} - \sqrt{q_k(\bm{x})}\right\vert \\
        &\leq 4\left\vert\sqrt{\frac{p_k(\bm{x}) + q_k(\bm{x})}{2}} - \sqrt{q_k(\bm{x})}\right\vert.
    \end{align*}
    This implies
    \begin{align*}
        H^2(\bm{p}, \bm{q}) &= \frac{1}{2}\sum_{k=1}^K \left\vert\sqrt{p_k(\bm{x})} - \sqrt{q_k(\bm{x})}\right\vert^2 
        \leq 16 \cdot \frac{1}{2}\sum_{k=1}^K \left\vert\sqrt{\frac{p_k(\bm{x}) + q_k(\bm{x})}{2}} - \sqrt{q_k(\bm{x})}\right\vert^2 
        = 16H^2\left(\frac{\bm{p} + \bm{q}}{2}, \bm{q}\right).
    \end{align*}
    Thus, \(R\) satisfies the inequality by definition.
\end{proof}

From Lemma \ref{lem:basic inequality} and Lemma \ref{lem:excess risk of convex combination}, it is understood that to evaluate \(R\left(\pr{\hat{\bm{p}}_n + \Tilde{\bm{p}}_n} / 2, \Tilde{\bm{p}}_n\right)\), one needs to evaluate the empirical process \(\nu_n(g_{\bm{p}}) \coloneqq \sqrt{n}(P_n - P)g_{\bm{p}}\), indexed by
\begin{equation*}
    \mathcal{G}_n \coloneqq \br{g_{\bm{p}} = \frac{1}{2}\bm{y}^\top \log\pr{\frac{\bm{p} + \Tilde{\bm{p}}_n}{2\Tilde{\bm{p}}_n}}\mathbbm{1}(\Tilde{\bm{p}}_n > 0) \mid \bm{p} \in \mathcal{F}_n},
\end{equation*}
which appears on the right-hand side of Lemma \ref{lem:basic inequality}.
Note that, since \(g_{\bm{p}} \geq 1 / 2\log\pr{\Tilde{\bm{p}}_n / 2\Tilde{\bm{p}}_n} = -1 / 2\log(2)\), \(\mathcal{G}_n\) is bounded from below.
To evaluate this empirical process, we employ a Bernstein-type uniform inequality. 
Before stating this uniform inequality, let us introduce some definitions. 

For a function \(g : \mathcal{X} \times \mathcal{Y} \to \mathbb{R}\) and \(M > 0\), the Bernstein semi-norm \(\rho_M(g)\) is defined as
\begin{equation*}
    \rho_M^2(g) = 2M^2 \int \pr{e^{\frac{\lvert g(\bm{x}, \bm{y}) \rvert}{M}} - 1 - \frac{\lvert g(\bm{x}, \bm{y}) \rvert}{M}} dP(\bm{x}, \bm{y}).
\end{equation*}
Furthermore, for functions \(f, g : \mathcal{X} \times \mathcal{Y} \to \mathbb{R}\), \(\rho_M(f - g)\) is referred to as the Bernstein difference between \(f\) and \(g\).
We then define the generalized bracketing number using this Bernstein difference, which is as follows:
\begin{Def}[Generalized Bracketing Number]
    Let \(\mathcal{N}_{B, M}\pr{\delta, \mathcal{F}, P}\) be the smallest value of \(N\) for which there exist pairs of functions \(\{(f_j^L, f_j^U)\}_{j=1}^N\) such that \(\rho_M(f_j^U - f_j^L) \leq \delta\) for all \(j \in [N]\), and such that for each \(f \in \mathcal{F}\), there is a \(j = j(g) \in [N]\) such that
    \begin{equation*}
        f_j^L \leq f \leq f_j^U.
    \end{equation*}
    (Take \(\mathcal{N}_{B, M}\pr{\delta, \mathcal{F}, P} = \infty\) if no finite set of such brackets exists.) We call \(\mathcal{N}_{B, M}\pr{\delta, \mathcal{F}, P}\) the \(\delta\)-generalized bracketing number of \(\mathcal{F}\).
\end{Def}
The following is a Bernstein-type uniform inequality.
\begin{theorem}[Theorem 5.11 of \cite{vandeGeer00}] \label{thm:uniform bernstein inequality}
    Let \(\mathcal{G}\) be a collection of functions.
    Assume that \(\mathcal{G}\) satisfies
    \begin{equation}
        \sup_{g \in \mathcal{G}} \rho_M(g) \leq U \label{eq:bernstein condition}
    \end{equation}
    for some \(U > 0,\; M > 0\). Take
    \begin{align}
        a &\leq C_1\sqrt{n}U^2/M \label{eq:condition 1 for bernstein}, \\
        a &\leq 8\sqrt{n}U \label{eq:condition 2 for bernstein}, \\
        a &\geq C_0\left(\int_{a/(2^6\sqrt{n})}^U \sqrt{\log\mathcal{N}_{B, M}(u, \mathcal{G}, P)} \; du \lor U\right) \label{eq:condition 3 for bernstein}, \\
        C_0^2 &\geq C^2(C_1 + 1) \label{eq:condition 4 for bernstein}.
    \end{align}
    Then,
    \begin{equation*}
        \mathbb{P}\left(\sup_{g \in \mathcal{G}} \lvert \nu_n(g) \rvert \geq a\right) \leq C\exp\left(-\frac{a^2}{C^2(C_1 + 1)U^2}\right).
    \end{equation*}
\end{theorem}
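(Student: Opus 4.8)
The statement is Theorem~5.11 of \cite{vandeGeer00}, and the plan is to reproduce its proof, which is the classical \emph{chaining-with-brackets} argument built on a one-function Bernstein bound. First I would establish the one-function bound. The Bernstein semi-norm is defined precisely so that, for a fixed centered measurable $g$, the moment generating function of $\nu_n(g)$ obeys a sub-Gaussian estimate $\mathbb{E}e^{t\nu_n(g)} \le e^{t^2\rho_M^2(g)/2}$ for all $0 \le t < \sqrt{n}/M$ (this comes from the elementary inequality $e^u - 1 - u \le c^2(e^{u/c} - 1 - u/c)$ for $0 < c \le 1$, applied with $c = tM$, after centering and tensorizing over the $n$ i.i.d.\ summands). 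Optimizing over $t$ gives
\[
\Pb\pr{\verts{\nu_n(g)} \ge a} \le 2\exp\pr{-\frac{a^2}{2\rho_M^2(g)}}
\qquad\text{whenever } a \le \frac{\sqrt{n}\,\rho_M^2(g)}{M};
\]
outside that range (with $t$ saturated at $\sqrt{n}/M$) one only gets a sub-exponential tail. It is exactly the requirement of staying inside the sub-Gaussian regime \emph{at every scale of the chain} that produces the upper bounds \eqref{eq:condition 1 for bernstein} and \eqref{eq:condition 2 for bernstein} on $a$.

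For the chaining, fix dyadic scales $\delta_s = 2^{-s}U$, set $N_s \coloneqq \mathcal{N}_{B,M}(\delta_s,\mathcal{G},P)$, and for each $g \in \mathcal{G}$ let $g_s$ be an element of the level-$s$ bracket containing $g$, so that $\verts{g_s - g} \le f^{U}_{s,j} - f^{L}_{s,j}$ pointwise and hence $\rho_M(g_s - g) \le \delta_s$. Then $\rho_M(g_s - g_{s-1}) \le 3\delta_s$, the pair $(g_s, g_{s-1})$ takes at most $N_sN_{s-1} \le N_s^2$ distinct values, and telescoping gives $\nu_n(g) = \nu_n(g_0) + \sum_{s=1}^{S}\nu_n(g_s - g_{s-1}) + \nu_n(g - g_S)$, where the chain is stopped at the finest level $S$ with $\delta_S \asymp a/\sqrt{n}$ and the residual $\nu_n(g - g_S)$ is handled by a crude application of the one-function bound (this is the source of the lower endpoint $a/(2^6\sqrt{n})$ in \eqref{eq:condition 3 for bernstein}). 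Allocating a per-level budget $a_s \propto \delta_s\sqrt{\log N_s} \vee \delta_s$ with $\sum_s a_s \le a$, applying the one-function bound to each increment with Bernstein difference $\le 3\delta_s$, and taking a union bound over the $\le N_s^2$ increments at level $s$, the factor $\log N_s^2$ is absorbed into the exponent; summing the resulting geometric-type series turns $\sum_s \delta_s\sqrt{\log N_s}$ into the entropy integral $\int_{a/(2^6\sqrt{n})}^U \sqrt{\log \mathcal{N}_{B,M}(u,\mathcal{G},P)}\,du$ and the exponents into $-a^2/\pr{C^2(C_1+1)U^2}$. Condition \eqref{eq:condition 3 for bernstein} is then precisely what is needed for $a$ to dominate the chain, while $\nu_n(g_0)$ is absorbed using $\sup_g \rho_M(g) \le U$.

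The main obstacle is the coupled constant bookkeeping that makes \eqref{eq:condition 1 for bernstein}--\eqref{eq:condition 4 for bernstein} close simultaneously: one must keep the sub-exponential correction $aM/\sqrt{n}$ negligible against $\rho_M^2$ uniformly over all scales (which pins down the admissible range of $a$ and the stopping level $S$, using \eqref{eq:condition 1 for bernstein}--\eqref{eq:condition 2 for bernstein}), split the total deviation $a$ across levels so that $\sum_s a_s \le a$ while the summed exponents still exceed $a^2/(C^2(C_1+1)U^2)$ after the union bound over the product of bracketing numbers, and finally take the chaining constant $C_0$ large relative to $C$ and $C_1$ through \eqref{eq:condition 4 for bernstein} so that every ``$\lesssim$'' above becomes an honest inequality. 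Each piece is routine, but they are tightly linked; carrying them all out with explicit universal constants is exactly the proof of Theorem~5.11 in \cite{vandeGeer00}, which we adopt here.
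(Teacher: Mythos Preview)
The paper does not prove this statement at all: it is quoted verbatim as Theorem~5.11 of \cite{vandeGeer00} and used as a black box in the proof of Theorem~\ref{thm:probability oracle inequality}. Your sketch of the bracketing--chaining argument built on the one-function Bernstein bound is the standard route and matches the proof in \cite{vandeGeer00}, so there is nothing to compare against in the present paper; if a self-contained proof is desired, your outline is the right one, with the usual caveat that the constant bookkeeping you flag is where all the work lies.
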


Furthermore, for the proof of Theorem \ref{thm:probability oracle inequality}, we present several lemmas to verify the conditions \eqref{eq:bernstein condition}-\eqref{eq:condition 4 for bernstein} of Theorem \ref{thm:uniform bernstein inequality}.

\begin{lemma}[Lemma 7.1 in \cite{vandeGeer00}] \label{lem:bound for bernstein norm}
    For \(x \geq - \Gamma\) with $\Gamma > 0$, we have
    \begin{equation*}
        2(e^{\lvert x \rvert} - 1 - \lvert x \rvert) \leq c_\Gamma^2(e^x - 1)^2,\quad
        c_\Gamma = \frac{2(e^\Gamma - 1 - \Gamma)}{(e^{-\Gamma} - 1)^2}.
    \end{equation*}
\end{lemma}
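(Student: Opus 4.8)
The plan is to reduce the whole statement to the single elementary inequality
\[
2\bigl(t-\log(1+t)\bigr)\le t^{2}\qquad\text{for all }t\ge 0,
\]
which I will call $(\star)$. It is immediate: both sides vanish at $t=0$, and their derivatives obey $2t/(1+t)\le 2t$ on $[0,\infty)$, so $(\star)$ follows by integration.

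Next I would split on the sign of $x$. If $x\ge 0$, set $t=e^{x}-1\ge 0$; then $\verts{x}=x$, $e^{\verts{x}}-1-\verts{x}=t-\log(1+t)$, and $(e^{x}-1)^{2}=t^{2}$, so $(\star)$ gives $2\bigl(e^{\verts{x}}-1-\verts{x}\bigr)\le(e^{x}-1)^{2}\le c_{\Gamma}^{2}(e^{x}-1)^{2}$, the last step using $c_{\Gamma}\ge 1$ (established below). If $-\Gamma\le x<0$, set $y=-x\in(0,\Gamma]$; then $\verts{x}=y$ and $(e^{x}-1)^{2}=(1-e^{-y})^{2}$, so the claim becomes $h(y)\le c_{\Gamma}^{2}$ for $y\in(0,\Gamma]$, where
\[
h(y):=\frac{2(e^{y}-1-y)}{(1-e^{-y})^{2}}.
\]

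To handle this I would multiply numerator and denominator of $h$ by $e^{2y}$ and substitute $t=e^{y}-1$, which rewrites $h(y)=2(1+t)^{2}\bigl(t-\log(1+t)\bigr)/t^{2}$. A quotient-rule computation shows that the $t$-derivative of $(1+t)^{2}\bigl(t-\log(1+t)\bigr)/t^{2}$ equals $(1+t)\bigl(t^{2}-2(t-\log(1+t))\bigr)/t^{3}$, which is nonnegative by $(\star)$; since $t=e^{y}-1$ increases in $y$, the function $h$ is non-decreasing on $(0,\infty)$. Hence $h(y)\le h(\Gamma)=c_{\Gamma}$ for all $y\in(0,\Gamma]$. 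Finally, a Taylor expansion of the numerator and denominator of $h$ gives $\lim_{y\to 0^{+}}h(y)=1$, so the monotonicity yields $c_{\Gamma}=h(\Gamma)\ge 1$ and therefore $c_{\Gamma}\le c_{\Gamma}^{2}$; combining, $h(y)\le c_{\Gamma}\le c_{\Gamma}^{2}$, which also justifies the inequality $c_{\Gamma}\ge 1$ invoked in the case $x\ge 0$.

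The only load-bearing point is the monotonicity of $h$ on the regime $x<0$: there $(e^{x}-1)^{2}$ is only of order $x^{2}$, whereas $e^{\verts{x}}-1-\verts{x}$, though also of order $x^{2}$ near the origin, grows exponentially, which is exactly why a $\Gamma$-dependent constant is unavoidable. The saving grace is that this monotonicity collapses, after the substitution $t=e^{y}-1$, to the very same inequality $(\star)$, so no estimate beyond $(\star)$ is required and the rest is bookkeeping.
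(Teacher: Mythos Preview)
Your proof is correct. The paper does not supply its own proof of this lemma; it merely quotes Lemma~7.1 of van de Geer (2000) and invokes it as a black box in the subsequent Lemmas~\ref{lem:bernstein norm and hellinger risk} and~\ref{lem:bracket for bernstein norm and hellinger risk} (with $\Gamma=\tfrac{1}{2}\log 2$ and $\Gamma\downarrow 0$, respectively), so there is nothing in the paper to compare your argument against.

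For what it is worth, your reduction is tidy: both the $x\ge 0$ case and the monotonicity of $h$ on $(0,\Gamma]$ collapse to the single elementary estimate $(\star)$, and your derivative computation for $(1+t)^2\bigl(t-\log(1+t)\bigr)/t^2$ checks out. You in fact establish the sharper bound $2(e^{|x|}-1-|x|)\le c_\Gamma\,(e^x-1)^2$, since $h(y)\le h(\Gamma)=c_\Gamma$; the stated version with $c_\Gamma^2$ then follows from $c_\Gamma\ge 1$, which you correctly deduce from $h(0^+)=1$ together with the monotonicity.
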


\begin{lemma} \label{lem:bernstein norm and hellinger risk}
    Let Assumption \ref{assumption} hold.
    For all \(g_{\bm{p}} \in \mathcal{G}_n\), we have
    \begin{equation*}
        \rho_1^2(g_{\bm{p}}) \leq
        16c_0^2R\left(\frac{\bm{p} +\Tilde{\bm{p}}_n}{2},\Tilde{\bm{p}}_n\right).
    \end{equation*}
\end{lemma}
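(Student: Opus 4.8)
The plan is to unwind the definition of $\rho_1^2(g_{\bm p})$ and relate it to the squared Hellinger distance between $(\bm p + \Tilde{\bm p}_n)/2$ and $\Tilde{\bm p}_n$. Recall that
$g_{\bm p}(\bm x, \bm y) = \tfrac12 \bm y^\top \log\!\bigl(\tfrac{\bm p + \Tilde{\bm p}_n}{2\Tilde{\bm p}_n}\bigr)\mathbbm 1(\Tilde{\bm p}_n > 0)$, and since $\bm y \in \{\bm e_1, \dots, \bm e_K\}$, the function $g_{\bm p}$ evaluated at $(\bm x, \bm e_k)$ equals $\tfrac12 \log\!\bigl(\tfrac{p_k(\bm x) + \Tilde p_{n,k}(\bm x)}{2\Tilde p_{n,k}(\bm x)}\bigr)$ on the set where $\Tilde p_{n,k}(\bm x) > 0$. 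As noted just before the lemma, $g_{\bm p} \geq -\tfrac12\log 2 \geq -\Gamma$ with $\Gamma = \tfrac12\log 2$ (indeed $g_{\bm p} \geq \tfrac12 \log(\tfrac{\Tilde p_{n,k}}{2\Tilde p_{n,k}}) = -\tfrac12\log 2$, since $p_k \geq 0$). So I would first apply Lemma \ref{lem:bound for bernstein norm} with $x = g_{\bm p}(\bm x, \bm y)$ and this $\Gamma$: pointwise,
\[
2\bigl(e^{|g_{\bm p}|} - 1 - |g_{\bm p}|\bigr) \leq c_\Gamma^2 \bigl(e^{g_{\bm p}} - 1\bigr)^2,
\]
hence $\rho_1^2(g_{\bm p}) = \int 2(e^{|g_{\bm p}|} - 1 - |g_{\bm p}|)\,dP \leq c_\Gamma^2 \int (e^{g_{\bm p}} - 1)^2\,dP$.

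Next I would compute $c_\Gamma$ explicitly for $\Gamma = \tfrac12\log 2$ and check it is bounded by the relevant numerical constant; one finds $c_\Gamma^2$ is a modest absolute constant (roughly of order $1$–$2$), which together with the factor $c_0^2$ coming below should land inside the claimed $16c_0^2$. Then the core computation: on $\{\Tilde p_{n,k} > 0\}$,
\[
e^{g_{\bm p}(\bm x, \bm e_k)} - 1 = \sqrt{\frac{p_k(\bm x) + \Tilde p_{n,k}(\bm x)}{2\Tilde p_{n,k}(\bm x)}} - 1 = \frac{\sqrt{\tfrac{p_k + \Tilde p_{n,k}}{2}} - \sqrt{\Tilde p_{n,k}}}{\sqrt{\Tilde p_{n,k}}},
\]
so that
\[
\int (e^{g_{\bm p}} - 1)^2\,dP = \int_{\mathcal X} \sum_{k=1}^K \frac{\bigl(\sqrt{\tfrac{p_k + \Tilde p_{n,k}}{2}} - \sqrt{\Tilde p_{n,k}}\bigr)^2}{\Tilde p_{n,k}}\, \eta_k(\bm x)\,\mathbbm 1(\Tilde p_{n,k} > 0)\, dP_{\bm X}(\bm x),
\]
using that $\int_{\mathcal Y} \bm y_k^2\, dP(\bm y \mid \bm x) = \eta_k(\bm x)$. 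Now invoke Assumption \ref{assumption}: $\eta_k / \Tilde p_{n,k} \leq c_0^2$ on the relevant set, so $\eta_k \mathbbm 1(\Tilde p_{n,k} > 0) / \Tilde p_{n,k} \leq c_0^2$, giving
\[
\int (e^{g_{\bm p}} - 1)^2\,dP \leq c_0^2 \int_{\mathcal X} \sum_{k=1}^K \Bigl(\sqrt{\tfrac{p_k + \Tilde p_{n,k}}{2}} - \sqrt{\Tilde p_{n,k}}\Bigr)^2 dP_{\bm X} = 2c_0^2 \int_{\mathcal X} H^2\!\Bigl(\tfrac{\bm p + \Tilde{\bm p}_n}{2}, \Tilde{\bm p}_n\Bigr) dP_{\bm X} = 2c_0^2\, R\!\Bigl(\tfrac{\bm p + \Tilde{\bm p}_n}{2}, \Tilde{\bm p}_n\Bigr).
\]
Combining, $\rho_1^2(g_{\bm p}) \leq 2 c_\Gamma^2 c_0^2\, R(\cdot, \cdot)$, and it remains to verify $2c_\Gamma^2 \leq 16$, i.e. $c_\Gamma^2 \leq 8$ for $\Gamma = \tfrac12\log 2$, which is a direct numerical check (the numerator $2(e^\Gamma - 1 - \Gamma)$ and denominator $(e^{-\Gamma}-1)^2$ evaluated at $\Gamma = \tfrac12\log2 \approx 0.347$ give $c_\Gamma^2 \approx 1.5$, comfortably within the bound).

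The only delicate points are bookkeeping ones: handling the indicator $\mathbbm 1(\Tilde{\bm p}_n > 0)$ carefully so that all divisions by $\Tilde p_{n,k}$ occur only where it is positive (Assumption \ref{assumption} forces $\Tilde p_{n,k} > 0$ wherever $\eta_k > 0$ anyway, since $\eta_k/\Tilde p_{n,k} \leq c_0^2 < \infty$), and confirming the numerical constant. The substantive content — the exact identity $e^{g_{\bm p}} - 1 = (\sqrt{(p_k+\Tilde p_{n,k})/2} - \sqrt{\Tilde p_{n,k}})/\sqrt{\Tilde p_{n,k}}$ and the appearance of $H^2$ after applying Assumption \ref{assumption} — is a short direct computation. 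I expect no real obstacle here; the main care is just ensuring the constant chase closes with the stated $16c_0^2$.
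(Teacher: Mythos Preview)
Your proposal is correct and follows essentially the same route as the paper: apply Lemma~\ref{lem:bound for bernstein norm} with $\Gamma = \tfrac12\log 2$, rewrite $e^{g_{\bm p}}-1$ as the ratio $\bigl(\sqrt{(p_k+\Tilde p_{n,k})/2}-\sqrt{\Tilde p_{n,k}}\bigr)/\sqrt{\Tilde p_{n,k}}$, use Assumption~\ref{assumption} to replace $\eta_k/\Tilde p_{n,k}$ by $c_0^2$, and recognize the resulting sum as $2H^2$. The paper simply bounds $c_\Gamma^2$ by $8$ in one step rather than computing it numerically, but the argument and the final constant $16c_0^2$ coincide.
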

\begin{proof}
    Since \(\mathcal{G}_n\) is bounded from below, we apply Lemma \ref{lem:bound for bernstein norm} with \(x = g_{\bm{p}}\) and \(\Gamma = \frac{1}{2}\log(2)\).
    Noting that \(\bm{y}\) is a standard basis vector, we have
    \begin{align*}
        \rho_1^2(g_{\bm{p}}) &= \int_{\mathcal{X} \times \mathcal{Y}}\left(e^{\lvert g_{\bm{p}} \rvert} - 1 - \lvert g_{\bm{p}} \rvert\right) dP(\bm{x}, \bm{y}) \\
        &\leq 8\int_\mathcal{X}\int_\mathcal{Y}(e^{g_{\bm{p}}} - 1)^2 dP(\bm{y} \mid \bm{x}) dP_{\bm{X}}(\bm{x}) \\
        &= 8\int_\mathcal{X}\sum_{k=1}^K\pr{\sqrt{\frac{p_k + \Tilde{p}_{n, k}}{2\Tilde{p}_{n, k}}}\mathbbm{1}(\Tilde{\bm{p}} > 0) - 1}^2 \Tilde{p}_{n, k}\frac{\eta_k}{\Tilde{p}_{n, k}} dP_X(\bm{x}) \\
        &\leq 16c_0^2\int_\mathcal{X}H^2\left(\frac{\bm{p} +\Tilde{\bm{p}}_n}{2},\Tilde{\bm{p}}_n\right) dP_X(\bm{x}) 
        = 16c_0^2R\left(\frac{\bm{p} +\Tilde{\bm{p}}_n}{2},\Tilde{\bm{p}}_n\right).
    \end{align*}
\end{proof}

\begin{lemma} \label{lem:bracket for bernstein norm and hellinger risk}
    Let Assumption \ref{assumption} hold.
    Let \(0 \leq \bm{p}^L \leq \bm{p}^U\).
    Also, let
    \begin{equation*}
        g^L = \frac{1}{2}\bm{y}^\top\log\pr{\frac{\bm{p}^L + \Tilde{\bm{p}}_n}{2\Tilde{\bm{p}}_n}}\mathbbm{1}(\Tilde{\bm{p}}_n > 0),\text{ and }\; g^U = \frac{1}{2}\bm{y}^\top\log\pr{\frac{\bm{p}^U + \Tilde{\bm{p}}_n}{2\Tilde{\bm{p}}_n}}\mathbbm{1}(\Tilde{\bm{p}}_n > 0).
    \end{equation*}
    Then
    \begin{equation*}
        \rho_1(g^U - g^L) \leq 2c_0\sqrt{R\pr{\frac{\bm{p}^L + \Tilde{\bm{p}}_n}{2}, \frac{\bm{p}^U + \Tilde{\bm{p}}_n}{2}}}.
    \end{equation*}
\end{lemma}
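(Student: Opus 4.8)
The plan is to imitate the proof of Lemma \ref{lem:bernstein norm and hellinger risk}, exploiting the extra structure that the two bracket functions are ordered. First I would observe that, since $\bm{p}^L \le \bm{p}^U$ and $\bm{p}^L \ge 0$, every coordinate of $(\bm{p}^U + \Tilde{\bm{p}}_n)/(\bm{p}^L + \Tilde{\bm{p}}_n)$ is at least $1$ on $\{\Tilde{\bm{p}}_n > 0\}$, so that
\[
g^U - g^L = \frac{1}{2}\bm{y}^\top\log\pr{\frac{\bm{p}^U + \Tilde{\bm{p}}_n}{\bm{p}^L + \Tilde{\bm{p}}_n}}\mathbbm{1}(\Tilde{\bm{p}}_n > 0) \ge 0
\]
because $\bm{y}$ is a standard basis vector (and on $\{\Tilde{p}_{n,k} = 0\}$ the indicator makes the difference vanish). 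Hence $\lvert g^U - g^L \rvert = g^U - g^L$, and Lemma \ref{lem:bound for bernstein norm} applied with $\Gamma \downarrow 0$ (for which $c_\Gamma \to 1$), equivalently the elementary bound $2(e^x - 1 - x) \le (e^x - 1)^2$ valid for all $x \ge 0$, gives after integrating against $P$ that
\[
\rho_1^2(g^U - g^L) \le \int\pr{e^{g^U - g^L} - 1}^2 dP .
\]

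Next I would evaluate this integral coordinatewise. Conditioning on $\bm{X} = \bm{x}$ and using $\Pb(\bm{Y} = \bm{e}_k \mid \bm{x}) = \eta_k(\bm{x})$, on the event $\{\Tilde{p}_{n,k}(\bm{x}) > 0\}$ and for $\bm{y} = \bm{e}_k$ one has $e^{g^U - g^L} = \sqrt{(p_k^U + \Tilde{p}_{n,k})/(p_k^L + \Tilde{p}_{n,k})}$, while the indicator annihilates the $k$-th term wherever $\Tilde{p}_{n,k}(\bm{x}) = 0$; therefore
\[
\int\pr{e^{g^U - g^L} - 1}^2 dP = \int_{\mathcal{X}}\sum_{k=1}^K \frac{\pr{\sqrt{p_k^U + \Tilde{p}_{n,k}} - \sqrt{p_k^L + \Tilde{p}_{n,k}}}^2}{p_k^L + \Tilde{p}_{n,k}}\,\eta_k(\bm{x})\,\mathbbm{1}(\Tilde{p}_{n,k}(\bm{x}) > 0)\,dP_X(\bm{x}).
\]
Then I would invoke Assumption \ref{assumption}: since $p_k^L \ge 0$ forces $p_k^L + \Tilde{p}_{n,k} \ge \Tilde{p}_{n,k}$, we get $\eta_k/(p_k^L + \Tilde{p}_{n,k}) \le \eta_k/\Tilde{p}_{n,k} \le c_0^2$ on $\{\Tilde{p}_{n,k} > 0\}$. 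Pulling this constant out, dropping the indicator (which only enlarges the integral), and recognising that
\[
\frac{1}{4}\int_{\mathcal{X}}\sum_{k=1}^K\pr{\sqrt{p_k^U + \Tilde{p}_{n,k}} - \sqrt{p_k^L + \Tilde{p}_{n,k}}}^2 dP_X(\bm{x}) = \int_{\mathcal{X}} H^2\pr{\frac{\bm{p}^L + \Tilde{\bm{p}}_n}{2}, \frac{\bm{p}^U + \Tilde{\bm{p}}_n}{2}} dP_X(\bm{x}) = R\pr{\frac{\bm{p}^L + \Tilde{\bm{p}}_n}{2}, \frac{\bm{p}^U + \Tilde{\bm{p}}_n}{2}},
\]
I obtain $\rho_1^2(g^U - g^L) \le 4c_0^2\,R\pr{(\bm{p}^L + \Tilde{\bm{p}}_n)/2,\,(\bm{p}^U + \Tilde{\bm{p}}_n)/2}$, and taking square roots gives the claim.

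There is no serious obstacle here, since this is the bracketing analogue of Lemma \ref{lem:bernstein norm and hellinger risk}. The two points that need care are (i) checking the sign $g^U - g^L \ge 0$, which is exactly what permits the sharp constant $c_\Gamma \to 1$ in Lemma \ref{lem:bound for bernstein norm} (a cruder choice of $\Gamma$ would push the final constant above $2c_0$), and (ii) the bookkeeping around the indicator $\mathbbm{1}(\Tilde{\bm{p}}_n > 0)$ together with the convention for $\eta_k/\Tilde{p}_{n,k}$ on the null set $\{\Tilde{p}_{n,k} = 0\}$ — but there both sides of every inequality vanish, so the issue is harmless.
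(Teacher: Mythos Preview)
Your proof is correct and follows essentially the same route as the paper's: apply Lemma \ref{lem:bound for bernstein norm} with $\Gamma \downarrow 0$ (exploiting $g^U - g^L \ge 0$), expand the resulting integral using that $\bm{y}$ is a standard basis vector, bound $\eta_k/(p_k^L + \Tilde{p}_{n,k}) \le \eta_k/\Tilde{p}_{n,k} \le c_0^2$, and identify the remaining sum as $4R\bigl((\bm{p}^L+\Tilde{\bm{p}}_n)/2,(\bm{p}^U+\Tilde{\bm{p}}_n)/2\bigr)$. The only cosmetic difference is that the paper writes the factor as $\Tilde{p}_{n,k}\cdot(\eta_k/\Tilde{p}_{n,k})$ and bounds $\Tilde{p}_{n,k}/(p_k^L+\Tilde{p}_{n,k})\le 1$ and $\eta_k/\Tilde{p}_{n,k}\le c_0^2$ separately, which amounts to the same thing.
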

\begin{proof}
    Applying Lemma \ref{lem:bound for bernstein norm} with \(\Gamma \downarrow 0\) and noting that \(\bm{y}\) is the standard basis vector, we have
    \begin{align*}
        \rho_1^2(g^U - g^L) &= \int_{\mathcal{X} \times \mathcal{Y}}\pr{e^{g^U - g^L} - 1 - \pr{g^U - g^L}} dP(\bm{x}, \bm{y}) \\
        &\leq \int_\mathcal{X}\int_\mathcal{Y} \pr{e^{g^U - g^L} - 1}^2 dP(\bm{y} \mid \bm{x}) dP_{\bm{X}}(\bm{x}) \\
        &= \int_\mathcal{X} \sum_{k=1}^K\pr{\frac{\sqrt{p_k^U + \Tilde{p}_{n, k}} - \sqrt{p_k^L + \Tilde{p}_{n, k}}}{\sqrt{p_k^L + \Tilde{p}_{n, k}}}\mathbbm{1}(\Tilde{\bm{p}} > 0)}^2\Tilde{p}_{n, k}\frac{\eta_k}{\Tilde{p}_{n, k}} dP_{\bm{X}}(\bm{x}) \\
        &\leq 2c_0^2\int_\mathcal{X}\sum_{k=1}^K\pr{\sqrt{\frac{p_k^U + \Tilde{p}_{n, k}}{2}} - \sqrt{\frac{p_k^L + \Tilde{p}_{n, k}}{2}}}^2 dP_{\bm{X}}(\bm{x}) \\
        &= 4c_0^2R\pr{\frac{\bm{p}^L + \Tilde{\bm{p}}_n}{2}, \frac{\bm{p}^U + \Tilde{\bm{p}}_n}{2}}.
    \end{align*}
\end{proof}

Now that the preparations are complete, we present the proof of Theorem \ref{thm:probability oracle inequality}.
\begin{proof}[Proof of Theorem \ref{thm:probability oracle inequality}]
    From Lemma \ref{lem:basic inequality}, we have
    \begin{align*}
        R\left(\frac{\hat{\bm{p}}_n + \Tilde{\bm{p}}_n}{2}, \Tilde{\bm{p}}_n\right) &\leq 
        (P_n - P)\pr{\frac{1}{2}\bm{Y}^\top \log\pr{\frac{\hat{\bm{p}}_n + \Tilde{\bm{p}}_n}{2\Tilde{\bm{p}}}}} + 2(1 + c_0)\sqrt{R\pr{\frac{\hat{\bm{p}}_n + \Tilde{\bm{p}}_n}{2}, \Tilde{\bm{p}}_n}R\pr{\Tilde{\bm{p}}_n, \boldsymbol{\eta}}} \\
        &= \mathrm{I} + \mathrm{II}.
    \end{align*}
    If $\mathrm{I} \leq \mathrm{II}$, then
    \begin{equation*}
        R\left(\frac{\hat{\bm{p}}_n + \Tilde{\bm{p}}_n}{2}, \Tilde{\bm{p}}_n\right) \leq 4(1 + c_0)\sqrt{R\pr{\frac{\hat{\bm{p}}_n + \Tilde{\bm{p}}_n}{2}, \Tilde{\bm{p}}_n}R\pr{\Tilde{\bm{p}}_n, \boldsymbol{\eta}}}.
    \end{equation*}
    Hence,
    \begin{equation*}
        R\left(\frac{\hat{\bm{p}}_n + \Tilde{\bm{p}}_n}{2}, \Tilde{\bm{p}}_n\right) \leq 16(1 + c_0)^2R\pr{\Tilde{\bm{p}}_n, \boldsymbol{\eta}}.
    \end{equation*}
    By the triangle inequality and Lemma \ref{lem:excess risk of convex combination},
    \begin{align*}
        R\pr{\hat{\bm{p}}_n, \boldsymbol{\eta}} &\leq 2\pr{R\pr{\hat{\bm{p}}_n, \Tilde{\bm{p}}_n} + R\pr{\Tilde{\bm{p}}_n, \boldsymbol{\eta}}}
        \leq 2\pr{16R\left(\frac{\hat{\bm{p}}_n + \Tilde{\bm{p}}_n}{2}, \Tilde{\bm{p}}_n\right) + R\pr{\Tilde{\bm{p}}_n, \boldsymbol{\eta}}} \\
        &\leq 514(1 + c_0)^2R\pr{\Tilde{\bm{p}}_n, \boldsymbol{\eta}}.
    \end{align*}
    This concludes the proof.
    Next, consider the case where $\mathrm{I} > \mathrm{II}$.
    If $\mathrm{I} > \mathrm{II}$,
    \begin{equation*}
        R\left(\frac{\hat{\bm{p}}_n + \Tilde{\bm{p}}_n}{2}, \Tilde{\bm{p}}_n\right) \leq 2(P_n - P)\pr{\frac{1}{2}\bm{Y}^\top \log\pr{\frac{\hat{\bm{p}}_n + \Tilde{\bm{p}}_n}{2\Tilde{\bm{p}}}}} = 2\frac{\nu_n(g_{\hat{\bm{p}}_n})}{\sqrt{n}}.
    \end{equation*}
    Combining this with Lemma \ref{lem:excess risk of convex combination} and the triangle inequality,
    \begin{align*}
        &\mathbb{P}\pr{R\pr{\hat{\bm{p}}_n, \boldsymbol{\eta}} > 2(1 + c_0^2)\delta^2 + 2R\pr{\Tilde{\bm{p}}_n, \boldsymbol{\eta}}}
        \leq \mathbb{P}(R\pr{\hat{\bm{p}}_n, \Tilde{\bm{p}}_n} > (1 + c_0^2)\delta^2) \\
        \leq& \mathbb{P}\,\left(\sup_{\bm{p} \in \mathcal{F}_n, R((\bm{p} + \Tilde{\bm{p}}_n) / 2, \Tilde{\bm{p}}_n) > \delta^2 / 16} \nu_n(g_{\bm{p}}) - \frac{\sqrt{n}}{2}R\left(\frac{\bm{p} + \Tilde{\bm{p}}_n}{2}, \Tilde{\bm{p}}_n\right) \geq 0\right).
    \end{align*}
    Since the Hellinger distance is bounded, $R\left(\pr{\bm{p} + \Tilde{\bm{p}}_n} / 2, \Tilde{\bm{p}}_n\right) \leq 1$.
    Then, let $S = \min\{s : 2^{2s + 2}\delta^2 / 16 > 1\}$, we have
    \begin{align*}
        &\mathbb{P}\,\left(\sup_{\bm{p} \in \mathcal{F}_n, R((\bm{p} + \Tilde{\bm{p}}_n) / 2, \Tilde{\bm{p}}_n) > \delta^2 / 16} \nu_n(g_{\bm{p}}) - \frac{\sqrt{n}}{2}R\left(\frac{\bm{p} + \Tilde{\bm{p}}_n}{2}, \Tilde{\bm{p}}_n\right) \geq 0\right) \\
        \leq& \sum_{s=0}^S\mathbb{P}\,\left(\sup_{\bm{p} \in \mathcal{F}_n, 2^{2s}\delta^2 / 16 < R((\bm{p} + \Tilde{\bm{p}}_n) / 2, \Tilde{\bm{p}}_n) \leq 2^{2s+2}\delta^2 / 16} \nu_n(g_{\bm{p}}) - \frac{\sqrt{n}}{2}R\left(\frac{\bm{p} + \Tilde{\bm{p}}_n}{2}, \Tilde{\bm{p}}_n\right) \geq 0\right) \\
        \leq& \sum_{s=0}^S\mathbb{P}\,\left(\sup_{\bm{p} \in \mathcal{F}_n, R((\bm{p} + \Tilde{\bm{p}}_n) / 2, \Tilde{\bm{p}}_n) \leq 2^{2s+2}\delta^2 / 16} \nu_n(g_{\bm{p}}) \geq \sqrt{n}2^{2s}\frac{\delta^2}{2^5}\right).
    \end{align*}
    To apply Theorem \ref{thm:uniform bernstein inequality}, we need to check the conditions for each $s$. Note that Assumption~\ref{assumption} holds for all $c_0^\prime > c_0$, and we can choose $c_0$ such that $c_0 \geq 1$.
    \begin{itemize}
        \item By Lemma \ref{lem:bernstein norm and hellinger risk}, if $R\left(\pr{\bm{p} + \Tilde{\bm{p}}_n} / 2, \Tilde{\bm{p}}_n\right) \leq 2^{2s+2}\delta^2 / 16$, then $\rho_1(g_{\bm{p}}) \leq 2^{s+1}c_0\delta$.
        Thus, choosing $U = 2^{s+1}c_0\delta$ satisfies \eqref{eq:bernstein condition}.
        \item Setting $a = \sqrt{n}2^{2s}\delta^2 / 2^5$, if $C_1 = 15 \geq (2^7c_0^2)^{-1}$, then \eqref{eq:condition 1 for bernstein} holds.
        \item To satisfy \eqref{eq:condition 2 for bernstein}, it is enough to ensure $\delta \leq 2^{-S+9} \Leftrightarrow 2^{S}\delta / 4 \leq 2^7$. This condition is satisfied since the left side does not exceed one according to the definition of $S$.
        \item Choose $c = \sqrt{2}2^8c_0C$. By Lemma \ref{lem:bracket for bernstein norm and hellinger risk}, we have
        \begin{align*}
            &\sqrt{\int_\mathcal{X}\sum_{k=1}^K\pr{\sqrt{\frac{p_k^U + \Tilde{p}_{n, k}}{2}} - \sqrt{\frac{p_k^L + \Tilde{p}_{n, k}}{2}}}^2 dP_{\bm{X}}(\bm{x})}
            = \sqrt{2}\sqrt{\int_\mathcal{X}H^2\left(\frac{\bm{p}^U + \Tilde{\bm{p}}_n}{2}, \frac{\bm{p}^U + \Tilde{\bm{p}}_n}{2}\right)dP_{\bm{X}}(\bm{x})}\\
            &= \sqrt{2}\sqrt{R\left(\frac{\bm{p}^U + \Tilde{\bm{p}}_n}{2}, \frac{\bm{p}^U + \Tilde{\bm{p}}_n}{2}\right)} 
            \geq \frac{1}{\sqrt{2}c_0}\rho_1\left(g^U - g^L\right).
        \end{align*}
        Thus, for any $u > 0$,
        \begin{equation*}
            H_{2, B}\left(u, \Bar{\mathcal{F}}^{1/2}(\Tilde{\bm{p}}_n, \delta), \mu\right) \geq \mathcal{H}_{B, 1}\pr{\sqrt{2}c_0u, \br{g_{\bm{p}} \in \mathcal{G}_n : R\pr{\frac{\bm{p} + \Tilde{\bm{p}}}{2}, \Tilde{\bm{p}}_n} \leq \delta^2}, P}.
        \end{equation*}
        Since $\sqrt{n}\delta_n^2 \geq c\Psi(\delta_n) \geq cJ_B(\delta_n, \Bar{\mathcal{F}}^{1/2}(\Tilde{\bm{p}}_n, \delta_n), \mu)$,
        \begin{align*}
            \sqrt{n}\delta_n^2 &\geq c\Psi(\delta_n) \geq c \int_{\delta_n^2/(2^{13}c_0)}^{\delta_n} \sqrt{\log N_{2, B}\pr{u, \Bar{\mathcal{F}}^{1/2}(\Tilde{\bm{p}}_n, \delta_n), \mu}} du \\
            &\geq c \int_{\delta_n^2/(2^{13}c_0)}^{\delta_n} \sqrt{\log\mathcal{N}_{B, 1}\pr{\sqrt{2}c_0u, \br{g_{\bm{p}} \in \mathcal{G}_n : R\pr{\frac{\bm{p} + \Tilde{\bm{p}}}{2}, \Tilde{\bm{p}}_n} \leq \delta^2}, P}} du \\
            &= \frac{c}{\sqrt{2}c_0} \int_{\sqrt{2}\delta_n^2/2^{13}}^{\sqrt{2}c_0\delta_n} \sqrt{\log\mathcal{N}_{B, 1}\pr{u, \br{g_{\bm{p}} \in \mathcal{G}_n : R\pr{\frac{\bm{p} + \Tilde{\bm{p}}}{2}, \Tilde{\bm{p}}_n} \leq \delta^2}, P}} du .
        \end{align*}
        Here, since \(2^{s+1}\delta_n/\sqrt{2} \geq \delta_n\), replacing \(\delta_n\) with \(2^{s+1}\delta_n/\sqrt{2}\) in the above expression is valid, and
        \begin{align*}
            \sqrt{n}2^{2s+1}\delta_n^2 &\geq \frac{c}{\sqrt{2}c_0} \int_{\sqrt{2}2^{2s}\delta_n^2/(2^{12})}^{2^{s+1}c_0\delta_n} \sqrt{\mathcal{N}_{B, 1}\pr{u, \br{g_{\bm{p}} \in \mathcal{G}_n : R\pr{\frac{\bm{p} + \Tilde{\bm{p}}}{2}, \Tilde{\bm{p}}_n} \leq 2^{2s+2}\delta_n^2 / 2}, P}} du \\
            &\geq \frac{c}{\sqrt{2}c_0} \int_{2^{2s}\delta_n^2/(2^{11})}^{2^{s+1}c_0\delta_n} \sqrt{\mathcal{N}_{B, 1}\pr{u, \br{g_{\bm{p}} \in \mathcal{G}_n : R\pr{\frac{\bm{p} + \Tilde{\bm{p}}}{2}, \Tilde{\bm{p}}_n} \leq 2^{2s+2}\delta_n^2 / 16}, P}} du .
        \end{align*}
        Thus, for \(\delta \geq \delta_n\),
        \begin{align*}
            a &\geq \frac{c}{2^6\sqrt{2}c_0} \int_{a/\left(2^6\sqrt{n}\right)}^{U} \sqrt{\mathcal{N}_{B, 1}\pr{u, \br{g_{\bm{p}} \in \mathcal{G}_n : R\pr{\frac{\bm{p} + \Tilde{\bm{p}}}{2}, \Tilde{\bm{p}}_n} \leq 2^{2s+2}\delta^2 / 16}, P}} du \\
            &= 4C \int_{a/\left(2^6\sqrt{n}\right)}^{U} \sqrt{\mathcal{N}_{B, 1}\pr{u, \br{g_{\bm{p}} \in \mathcal{G}_n : R\pr{\frac{\bm{p} + \Tilde{\bm{p}}}{2}, \Tilde{\bm{p}}_n} \leq 2^{2s+2}\delta^2 / 16}, P}} du.
        \end{align*}
        Also, since \(\sqrt{n}\delta_n^2 \geq c\Psi(\delta_n) \geq c\delta_n\), it follows that \(\sqrt{n}2^{2s}\delta_n^2/2^5 \geq c\delta_n2^{2s}/ (2^5c_0)\), implying that for \(\delta \geq \delta_n\), \(a \geq 4C2^{s+1}\delta\) holds.
        Therefore, choosing \(C_0 = 4C\) satisfies \eqref{eq:condition 3 for bernstein}.
        \item Since \eqref{eq:condition 4 for bernstein} also holds, Theorem \ref{thm:uniform bernstein inequality} is applicable, and for all \(\delta \geq \delta_n\),
        \begin{align*}
            &\mathbb{P}\Big(R\pr{\hat{\bm{p}}_n, \boldsymbol{\eta}} > 514(1 + c_0^2)(\delta^2 + R\pr{\Tilde{\bm{p}}_n, \boldsymbol{\eta}})\Big) 
            \leq \mathbb{P}\Big(R\pr{\hat{\bm{p}}_n, \boldsymbol{\eta}} > 2(1 + c_0^2)\delta^2 + 2R\pr{\Tilde{\bm{p}}_n, \boldsymbol{\eta}}\Big)\\
            &\leq \sum_{s=0}^\infty C\exp\pr{-\frac{n2^{2s}\delta^2}{2^{16}C^2c_0^2}} 
            \leq \sum_{s=1}^\infty C\exp\pr{-\frac{n\delta^2s}{2^{16}C^2c_0^2}}
            \leq C\pr{1 - \exp\pr{-\frac{n\delta^2}{2^{16}C^2c_0^2}}}^{-1}\exp\pr{-\frac{n\delta^2}{2^{16}C^2c_0^2}} \\
            &\leq C\pr{1 - \exp\pr{-2}}^{-1}\exp\pr{-\frac{2n\delta^2}{c^2}}
            \leq c\exp\pr{-\frac{n\delta^2}{c^2}}.
        \end{align*}
        Here, the definition of \(c\) and \eqref{eq:critical inequality} are used.
    \end{itemize}
\end{proof}

\begin{proof}[Proof of Cororally \ref{cor:expectation oracle inequality}]
    For a non-negative random variable \(X\), the equality \(\mathbb{E}\sqbr{X} = \int_0^\infty \mathbb{P}\pr{X \geq t} dt\) holds.
    Using this fact and Theorem 3.1, we have
    \begin{align*}
        \mathbb{E}_{\mathcal{D}_n}\sqbr{R\pr{\hat{\bm{p}}_n, \boldsymbol{\eta}}} &= \int_0^\infty \mathbb{P}\pr{R\pr{\hat{\bm{p}}_n, \boldsymbol{\eta}} \geq t} dt
        \leq 514(1 + c_0^2)\pr{\delta_n^2 + R\pr{\Tilde{\bm{p}}_n, \boldsymbol{\eta}}} + c\int_0^\infty \exp\pr{-\frac{nt}{c^2}} dt \\
        &= 514(1 + c_0^2)\pr{\delta_n^2 + R\pr{\Tilde{\bm{p}}_n, \boldsymbol{\eta}}} + \frac{c^3}{n}.
    \end{align*}
\end{proof}

\subsection{Proof of Theorem \ref{thm:convergence rates with non-sparse DNN}} \label{app: proof of the convergence rate}
In this section, we will prove Theorem \ref{thm:convergence rates with non-sparse DNN}. First, we will state several lemmas related to approximation error and use them to prove Theorem \ref{thm:convergence rates with non-sparse DNN}. Throughout this section, for any function class \(\mathcal{F}\), we denote \(\mathcal{F}^{1/2} \coloneqq \{\sqrt{f} : f \in \mathcal{F}\}\).
Furthermore, when considering the composition of NNs and similar operations, we employ the enlarging, composition, depth synchronization, and parallelization properties as outlined in Section A.1 of \cite{BosSchmidt-Hieber22}.

First, we define the covering number and present several lemmas that are necessary for the proof of Theorem \ref{thm:convergence rates with non-sparse DNN}.
\begin{Def}[Covering Number for Supremum Norm]
    Let \(N(\delta, \mathcal{F}, \Verts{\; \cdot \;}_\infty)\) be the smallest value of \(N\) for which there exist \(\{g_j\}_{j=1}^n\) such that
    \begin{equation}
        \sup_{g \in \mathcal{G}} \min_{j \in [N]} \Verts{g - g_j}_\infty \leq \delta.
    \end{equation}
    We call \(N(\delta, \mathcal{F}, \Verts{\; \cdot \;}_\infty)\) the \(\delta\)-covering number of \(\mathcal{G}\)
\end{Def}

\begin{lemma}\label{lem:bracketing and covering with sup norm}
    Let \(\mathcal{G}\) be a function class from \(\mathcal{X}\) to \(\mathbb{R}^d\). Suppose \(Q\) is a finite measure on \(\mathcal{G}\) with \(Q(\mathcal{G}) = q < \infty\). Then, for all \(\delta > 0\),
    \begin{equation*}
        N_{p, B}\pr{\delta, \mathcal{G}, Q} \leq N\pr{\frac{\delta}{2q^{1/p}}, \mathcal{G}, \Verts{\;\cdot\;}_\infty}.
    \end{equation*}
\end{lemma}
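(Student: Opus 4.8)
The plan is to pass from an $L^p(Q)$ bracketing bound to an $L^\infty$ covering bound by turning each $\Verts{\cdot}_\infty$-ball of the covering into a bracket. Concretely, let $\varepsilon = \delta/(2q^{1/p})$ and let $\{g_j\}_{j=1}^N$ be a $\Verts{\cdot}_\infty$-$\varepsilon$-cover of $\mathcal{G}$ with $N = N(\varepsilon, \mathcal{G}, \Verts{\cdot}_\infty)$. For each $j$, define the candidate bracket $\bm{f}_j^L \coloneqq g_j - \varepsilon \bm{1}$ and $\bm{f}_j^U \coloneqq g_j + \varepsilon \bm{1}$, where $\bm{1}$ is the all-ones vector in $\mathbb{R}^d$ and the addition is componentwise (this is the vector-plus-scalar convention fixed in the Notation subsection). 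Then for any $\bm{g} \in \mathcal{G}$ there is a $j$ with $\Verts{\bm{g} - g_j}_\infty \leq \varepsilon$, which by definition of the $\infty$-norm on $\mathbb{R}^d$-valued functions means $\max_{k} \verts{g_k(\bm{x}) - g_{j,k}(\bm{x})} \leq \varepsilon$ for a.e.\ $\bm{x}$, hence $\bm{f}_j^L \leq \bm{g} \leq \bm{f}_j^U$ pointwise. So these $N$ pairs cover $\mathcal{G}$ in the bracketing sense.

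It remains to check the width condition $\Verts{\bm{f}_j^U - \bm{f}_j^L}_{L^p(Q)} \leq \delta$. Here $\bm{f}_j^U - \bm{f}_j^L = 2\varepsilon \bm{1}$ is a constant vector-valued function, so $\max_{k}\verts{f_{j,k}^U(\bm{x}) - f_{j,k}^L(\bm{x})} = 2\varepsilon$ for every $\bm{x}$, and therefore
\begin{equation*}
    \Verts{\bm{f}_j^U - \bm{f}_j^L}_{L^p(Q)}^p = \int (2\varepsilon)^p \, dQ = (2\varepsilon)^p q = \left(\frac{\delta}{2q^{1/p}}\right)^p 2^p q = \delta^p,
\end{equation*}
so $\Verts{\bm{f}_j^U - \bm{f}_j^L}_{L^p(Q)} = \delta$, and a fortiori $\leq \delta$. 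This gives $N_{p,B}(\delta, \mathcal{G}, Q) \leq N = N(\delta/(2q^{1/p}), \mathcal{G}, \Verts{\cdot}_\infty)$, which is the claim.

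There is essentially no hard step here: the argument is the standard ``an $L^\infty$ ball is a bracket'' device, and the factor $2$ and the $q^{1/p}$ in the statement are exactly what is needed to make the width come out to $\delta$. The only points requiring a little care are (i) matching the paper's convention for the $\infty$-norm of vector-valued functions (the pointwise $\max$ over coordinates, then the essential sup over $\bm{x}$) with the coordinatewise order $\leq$ used in the bracketing definition, and (ii) making sure the ``finite measure on $\mathcal{G}$'' in the hypothesis is read as a finite measure on the common domain $\mathcal{X}$ with total mass $q$; once these are pinned down the computation is immediate.
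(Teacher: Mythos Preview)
Your proof is correct and follows essentially the same route as the paper's own argument: take an $\varepsilon$-cover in $\Verts{\cdot}_\infty$ with $\varepsilon=\delta/(2q^{1/p})$, turn each center $g_j$ into the bracket $[g_j-\varepsilon,\,g_j+\varepsilon]$, and verify that its $L^p(Q)$ width is exactly $\delta$. The only difference is cosmetic (you are a bit more explicit about the vector-valued conventions and the interpretation of $Q$).
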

\begin{proof}
    Let \(\{g_j\}_{j=1}^N\) be the smallest set satisfying
    \begin{equation*}
        \sup_{g \in \mathcal{G}}\min_{j = 1, \dots , N} \Verts{g - g_j}_\infty \leq \frac{\delta}{2q^{1/p}}.
    \end{equation*}
    Then, for any \(g \in \mathcal{G}\), there exists some \(j \in \br{1, \dots , N}\) such that for all \(x \in \mathcal{X}\),
    \begin{equation*}
        g_j(x) - \frac{\delta}{2q^{1/p}} \leq g(x) \leq g_j(x) + \frac{\delta}{2q^{1/p}}.
    \end{equation*}
    Therefore, for this \(j\), if we define \(g_j^L \coloneqq g_j - \frac{\delta}{2q^{1/p}}\) and \(g_j^U \coloneqq g_j + \frac{\delta}{2q^{1/p}}\), then
    \begin{equation*}
        \Verts{g_j^U - g_j^L}_{p, Q} = \pr{\int \frac{\delta^p}{q} dQ}^{\frac{1}{p}} = \delta,
    \end{equation*}
    and thus, \(\{(g_j^L, g_j^U)\}_{j=1}^N\) forms a \(\delta\)-bracketing set for \(\mathcal{G}\). Consequently,
    \begin{equation*}
        N_{p, B}\pr{\delta, \mathcal{G}, Q} \leq N\pr{\frac{\delta}{2q^{1/p}}, \mathcal{G}, \Verts{\;\cdot\;}_\infty}.
    \end{equation*}
\end{proof}

\begin{lemma}[Lemma 3 of \cite{suzuki19}] \label{lem:covering number of DNN}
    For any \(\delta > 0\),
    \begin{equation*}
        \log N\pr{\delta, \mathcal{F}_{\mathrm{id}}\pr{L, \bm{m}, B, s}, \Verts{\;\cdot\;}_\infty} \leq 2sL\log((B \lor 1)(\lvert \bm{m} \rvert_\infty + 1)) + s\log(\delta^{-1}L).    
    \end{equation*}
\end{lemma}

\begin{lemma}[Theorem 2 of \cite{KohlerLanger21}] \label{lem:approximation error for hölder}
    For any function \(f \in \mathcal{C}^\beta \pr{[0, 1]^d, Q}\) and integer \(N\) satisfies
    \begin{equation*}
        N \geq 2 \quad \text{and} \quad N^{2p} \geq c\left(\max_{\bm{\alpha} < \beta}\{\lVert \partial^{\bm{\alpha}} f \rVert_\infty\} \lor 1\right)^{4(\lfloor \beta \rfloor + 1)}
    \end{equation*}
    for some sufficiently large constant \(c \geq 1\), there exists a neural network
        $\Tilde{f} \in \mathcal{F}_\textup{id}\pr{L, (d, m, \dots , m, 1)}$
    with depth
    \begin{equation*}
        L \geq 5 + \lceil \log_4 (N^{\frac{2\beta_i}{d}}) \rceil (\lceil \log_2(\lfloor \beta \rfloor \lor d + 1) \rceil + 1)
    \end{equation*}
    and width
    \begin{equation*}
        m \geq 2^{d + 6} \binom{d + \lfloor \beta \rfloor}{d} d^2 (\lfloor \beta \rfloor + 1) N,
    \end{equation*}
    such that
    \begin{equation*}
        \lVert \Tilde{f} - f \rVert_\infty \leq C \left(\max_{\bm{\alpha} < \beta}\{\lVert \partial^{\bm{\alpha}} f \rVert_\infty\} \lor 1\right)^{4(\lfloor \beta \rfloor + 1)} N^{-\frac{2\beta}{d}},
    \end{equation*}
    where \(C\) is a constant only depending on \(\beta, Q, d\).
\end{lemma}

\begin{lemma}[Lemma 3 of \cite{Schmidt-Hieber20}] \label{lem:upper bound for composite function}
    Let \(f = \bm{g}_r \circ \cdots \circ \bm{g}_0, g_{ij} \in \mathcal{C}^{\beta_i}\pr{[a_i, b_i]^{t_i}, Q_i}\), and assume that \(Q_i \geq 1\).
    For \(i = 1, \dots , r-1\), define
    \begin{equation*}
        \bm{h}_0 \coloneqq \frac{\bm{g}_0(\bm{x})}{2Q_0} + \frac{1}{2},\;\; \bm{h}_i(\bm{x}) \coloneqq \frac{\bm{g}_i\pr{2Q_{i-1}\bm{x} - Q_{i-1}}}{2Q_i} + \frac{1}{2},\;\; \bm{h}_r(\bm{x}) \coloneqq \bm{g}_r\pr{2Q_{r-1}\bm{x} - Q_{r-1}}.
    \end{equation*}
    Then, we have
    \begin{equation*}
        f = \bm{g}_r \circ \cdots \circ \bm{g}_0 = \bm{h}_r \circ \cdots \circ \bm{h}_0,
    \end{equation*}
    where \(h_{0j}\) takes values in \([0, 1], h_{0j} \in \mathcal{C}^{\beta_0}\pr{[0, 1]^{t_0}, 1}, h_{ij} \in \mathcal{C}^{\beta_i}\pr{[0, 1]^{t_i}, \pr{2Q_{i-1}}^{\beta_i}}\) for \(i = 1, \dots , r - 1\) and \(h_{rj} \in \mathcal{C}^{\beta_r}\pr{[0, 1]^{t_r}, Q_r\pr{2Q_{r-1}}^{\beta_r}}\).
    Also, for any functions \(\Tilde{\bm{h}}_i = \pr{\Tilde{h}_{ij}}_j^\top\) with \(\Tilde{h}_{ij} : [0, 1]^{t_i} \to [0, 1]\),
    \begin{equation*}
        \Verts{\bm{h}_r \circ \cdots \circ \bm{h}_0 - \Tilde{\bm{h}}_r \circ \cdots \circ \Tilde{\bm{h}}_0}_\infty \leq Q_r\prod_{l = 0}^{r-1}\pr{2Q_l}^{\beta_{l+1}}\sum_{i=1}^r\Verts{\bm{h}_i - \Tilde{\bm{h}}_i}^{\prod_{l=i+1}^r \beta_l \land 1}.
    \end{equation*}
\end{lemma}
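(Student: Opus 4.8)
My plan is to prove the two assertions separately. The first is the purely algebraic claim that $f=\bm{g}_r\circ\cdots\circ\bm{g}_0$ can be rewritten as $\bm{h}_r\circ\cdots\circ\bm{h}_0$ with each coordinate $h_{ij}$ lying in a Hölder ball over $[0,1]^{t_i}$; I would obtain this by inserting cancelling affine maps. The second is the error-propagation estimate, which I would get from a layerwise telescoping argument combined with the elementary fact that a $\beta$-Hölder function on a bounded set is $(\beta\land 1)$-Hölder continuous with modulus controlled by its Hölder radius.

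For the first part, introduce the coordinatewise affine maps $A_i(\bm{y})=2Q_i\bm{y}-Q_i$ and $B_i(\bm{z})=\bm{z}/(2Q_i)+\tfrac12$, which satisfy $A_i\circ B_i=B_i\circ A_i=\mathrm{id}$. By construction $\bm{h}_0=B_0\circ\bm{g}_0$, $\bm{h}_i=B_i\circ\bm{g}_i\circ A_{i-1}$ for $1\le i\le r-1$, and $\bm{h}_r=\bm{g}_r\circ A_{r-1}$, so $\bm{h}_r\circ\cdots\circ\bm{h}_0$ telescopes (on the relevant cubes) to $\bm{g}_r\circ\cdots\circ\bm{g}_0=f$ once each adjacent pair $A_i\circ B_i$ is cancelled. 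Since $g_{ij}\in C^{\beta_i}([a_i,b_i]^{t_i},Q_i)$ forces $\Verts{g_{ij}}_\infty\le Q_i$, the map $B_i$ sends the range of $\bm{g}_i$ into $[0,1]^{d_{i+1}}$, so every $h_{ij}$ is $[0,1]$-valued. It then remains to read off the radii: precomposing with the dilation $A_{i-1}$ multiplies $k$-th order partials by $(2Q_{i-1})^k$ and the top-order Hölder seminorm by $(2Q_{i-1})^{\beta_i}$, postcomposing with $B_i$ divides all terms by $2Q_i$, and the additive shift $\tfrac12$ only touches the sup-norm term, which is already $\le 1$. Collecting factors yields $h_{0j}\in C^{\beta_0}([0,1]^{t_0},1)$, $h_{ij}\in C^{\beta_i}([0,1]^{t_i},(2Q_{i-1})^{\beta_i})$ for the intermediate layers $1\le i\le r-1$, and $h_{rj}\in C^{\beta_r}([0,1]^{t_r},Q_r(2Q_{r-1})^{\beta_r})$ since $\bm{h}_r$ carries no outer rescaling.

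For the error bound, I would first record the only analytic ingredient: every $h\in C^{\beta}([0,1]^{t},Q')$ satisfies $\verts{h(\bm{u})-h(\bm{v})}\le Q'\verts{\bm{u}-\bm{v}}_\infty^{\beta\land 1}$ on the cube --- for $\beta\ge1$ because the gradient is bounded by $Q'$, for $\beta<1$ by definition. Writing $F_i=\bm{h}_i\circ\cdots\circ\bm{h}_0$ and $\Tilde{F}_i=\Tilde{\bm{h}}_i\circ\cdots\circ\Tilde{\bm{h}}_0$ (with $\Tilde{F}_{-1}=\mathrm{id}$), I would telescope
\[
    \bm{h}_r\circ\cdots\circ\bm{h}_0-\Tilde{\bm{h}}_r\circ\cdots\circ\Tilde{\bm{h}}_0
    =\sum_i\bigl[\bm{h}_r\circ\cdots\circ\bm{h}_{i+1}\circ\bm{h}_i\circ\Tilde{F}_{i-1}-\bm{h}_r\circ\cdots\circ\bm{h}_{i+1}\circ\Tilde{\bm{h}}_i\circ\Tilde{F}_{i-1}\bigr].
\]
In the $i$-th term the two arguments of the common prefix $\bm{h}_r\circ\cdots\circ\bm{h}_{i+1}$ differ in sup-norm by at most $\Verts{\bm{h}_i-\Tilde{\bm{h}}_i}_\infty$; pushing this discrepancy through the remaining layers and iterating the modulus bound above raises it to the exponent $\prod_{l=i+1}^{r}(\beta_l\land 1)$ and multiplies it by a product of powers of the radii from the first part. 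Bounding that prefactor uniformly in $i$ by $Q_r\prod_{l=0}^{r-1}(2Q_l)^{\beta_{l+1}}$ (using $Q_l\ge1$, so that any absent factor only helps) and summing over the layers gives the claimed inequality.

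The affine-map cancellation and the single-layer Hölder-to-modulus step are routine; the part requiring care is the constant and exponent bookkeeping in the telescoping. One must correctly iterate the modulus bound through a variable number of remaining layers, so that a per-layer error at level $i$ acquires exactly the nested product $\prod_{l=i+1}^{r}(\beta_l\land 1)$ in the exponent and the right product of Hölder radii as a prefactor, and then verify that all these layer-dependent prefactors collapse into the single factor $Q_r\prod_{l=0}^{r-1}(2Q_l)^{\beta_{l+1}}$ --- without dropping a factor of $2Q_l$ somewhere. That collapse is the main obstacle.
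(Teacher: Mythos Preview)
The paper does not prove this lemma itself; it is quoted from \cite{Schmidt-Hieber20} and used as a black box. Your argument---inserting cancelling affine maps for the first assertion, and a layerwise telescoping combined with the modulus bound $\verts{h(\bm{u})-h(\bm{v})}\le Q'\verts{\bm{u}-\bm{v}}_\infty^{\beta\land 1}$ for the second---is correct and coincides with the proof in the original reference, including the prefactor bookkeeping you flagged: the iterated exponent on each $Q'_j=(2Q_{j-1})^{\beta_j}$ coming out of the recursion is $\prod_{l>j}(\beta_l\land 1)\le 1$, so $(2Q_{j-1})^{\beta_j\prod_{l>j}(\beta_l\land 1)}\le (2Q_{j-1})^{\beta_j}$, and the absent front factors $(2Q_l)^{\beta_{l+1}}\ge 1$ for $l<i$ can be inserted for free since every $Q_l\ge 1$.
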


\begin{lemma}[Theorem 1 of \cite{BosSchmidt-Hieber22}] \label{lem:approximation error for exp}
    For all \(M \geq 2\) and \(\beta > 0\) there exists a neural network \(G \in \mathcal{F}_{\textup{id}}(L, \bm{m}, s)\), with
    \begin{align*}
        &\text{\rm (i) }\; L = \lfloor 40 \pr{\beta + 2}^2 \log_2(M) \rfloor, \\
        &\text{\rm (ii) }\; \bm{m} = (1, \lfloor 48\lceil \beta \rceil^3 2^\beta M^{1 / \beta} \rfloor, \dots , \lfloor 48\lceil \beta \rceil^3 2^\beta M^{1 / \beta} \rfloor, 1), \\
        &\text{\rm (iii) }\; s \leq 4284(\beta + 2)^5 2^\beta M^{1 / \beta} \log_2(M)
    \end{align*}
    such that for any \(x \in [0, 1]\),
    \begin{equation*}
        \verts{e^{G(x)} - x} \leq \frac{4}{M} \quad and \quad G(x) \geq \log\pr{\frac{4}{M}}.
    \end{equation*}
\end{lemma}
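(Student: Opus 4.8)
I would prove this by building \(G\) as an approximation to the clipped logarithm via a telescoping decomposition onto a single fixed interval; the statement is in fact Theorem~1 of \cite{BosSchmidt-Hieber22}, so the construction below follows their blueprint. The target \(G\) must satisfy \(e^{G(x)}\approx x\) on \([0,1]\), i.e.\ \(G\approx\log\), which is not uniformly approximable near \(0\). Two facts make it work: (a) the error is effectively multiplicative, \(|e^{G(x)}-x|=x\,|e^{G(x)-\log x}-1|\lesssim x\,|G(x)-\log x|\) whenever \(|G(x)-\log x|\le 1\), so on the scale \(x\asymp 2^{-j}\) it suffices to recover \(\log x\) only to additive accuracy \(\asymp 2^{j}/M\); and (b) on each dyadic scale \(\log\) is, after an affine rescaling, one fixed smooth function whose Hölder norm is independent of \(M\).

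\textbf{Step 1: telescoping.} Put \(J\asymp\log_2 M\), \(y_0:=x\), and \(y_{k+1}:=\min(2y_k,1)\), so \(y_k=\min(2^{k}x,1)\); the recursion respects the weight bound \(1\) since \(2y_k=\sigma(y_k)+\sigma(y_k)\). With \(\mu(y):=\min(\log 2,-\log y)\) one has \(\mu(y_k)=\log(y_{k+1}/y_k)\) for every \(k\), hence \(-\sum_{k=0}^{J-1}\mu(y_k)=\log y_0-\log y_J=\log x\) for all \(x\ge 2^{-J}\). For \(x<2^{-J}\) every \(y_k\le\tfrac12\), so the sum is \(-J\log 2\approx\log(4/M)\), giving \(e^{-\sum_k\mu(y_k)}\approx 4/M\) and \(|e^{G(x)}-x|=|4/M-x|\le 4/M\) automatically. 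Note \(\mu\equiv\log 2\) on \((0,\tfrac12]\), \(\mu(y)=-\log y\) on \([\tfrac12,1]\), and \(\mu(1)=0\).

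\textbf{Step 2: per-scale approximation and assembly.} For each \(k\) build a ReLU network \(\widetilde\mu_k\): clip the input into \([\tfrac12,1]\), apply Lemma~\ref{lem:approximation error for hölder} to the fixed function \(y\mapsto-\log y\) on \([\tfrac12,1]\) with accuracy \(\varepsilon_k\asymp 2^{k}/M\), then take \(\min(\cdot,\log 2)\), and subtract the constant \(\widetilde\mu_k(1)\) so that \(\widetilde\mu_k(1)=0\) exactly. Set \(\widetilde G:=-\sum_{k=0}^{J-1}\widetilde\mu_k(y_k)\). For \(x\in I_j:=[2^{-j-1},2^{-j}]\): the terms \(k<j\) have \(y_k\le\tfrac12\), so \(\widetilde\mu_k(y_k)=\log 2\) up to \(\varepsilon_k\); the terms \(k>j\) have \(y_k=1\), so \(\widetilde\mu_k(y_k)=0\) exactly; and the single ``active'' term \(k=j\) equals \(-\log(2^{j}x)\) up to \(\varepsilon_j\). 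Summing gives \(\widetilde G(x)=\log x\pm\sum_{k\le j}\varepsilon_k\) with \(\sum_{k\le j}\varepsilon_k\asymp 2^{j}/M\), so by (a) and \(x\le 2^{-j}\) on \(I_j\) we get \(|e^{\widetilde G(x)}-x|\lesssim 1/M\). By Lemma~\ref{lem:approximation error for hölder} each \(\widetilde\mu_k\) has width \(\lesssim_{\beta}\varepsilon_k^{-1/\beta}\asymp M^{1/\beta}2^{-k/\beta}\), depth \(\lesssim_{\beta}\log M\), and sparsity \(\lesssim_{\beta}M^{1/\beta}2^{-k/\beta}\log M\). Running the \(\widetilde\mu_k\) in parallel on top of the depth-\(O(\log M)\) recursion for the \(y_k\) and summing — using the enlarging, depth-synchronization and parallelization operations of Section~A.1 of \cite{BosSchmidt-Hieber22} — the widths form the convergent geometric series \(\sum_k M^{1/\beta}2^{-k/\beta}\lesssim_{\beta}M^{1/\beta}\), which is exactly what yields the claimed width and the sparsity \(\lesssim_{\beta}M^{1/\beta}\log M\), while the depth stays \(O_{\beta}(\log M)\). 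Finally put \(G:=\max(\widetilde G,\log(4/M))=\log(4/M)+\sigma(\widetilde G-\log(4/M))\): this enforces \(G\ge\log(4/M)\), and checking the cases \(e^{\widetilde G(x)}\ge 4/M\) and \(e^{\widetilde G(x)}<4/M\) against \(|e^{\widetilde G(x)}-x|\lesssim 1/M\) gives \(|e^{G(x)}-x|\le 4/M\) everywhere.

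\textbf{Main obstacle.} The delicate part is the bookkeeping that couples the scale-dependent accuracies \(\varepsilon_k\asymp 2^{k}/M\) to both conclusions simultaneously. The geometric decay in \(k\) is precisely what keeps the total width at \(O_{\beta}(M^{1/\beta})\) rather than \(M^{1/\beta}\log M\); but then the error estimate must survive the many ``inactive'' terms (those with \(y_k\le\tfrac12\) or \(y_k=1\)), whose individual accuracies \(\varepsilon_k\) degrade to \(O(1)\) for \(k\) near \(J\). This is why one needs \(\widetilde\mu_k(1)=0\) to hold \emph{exactly} (the re-centering) and \(\widetilde\mu_k\equiv\log 2\) up to \(\varepsilon_k\) on \([0,\tfrac12]\) (the input clipping), so that on each ring \(I_j\) only the \(k\le j\) terms carry error and it sums to the tolerated \(\asymp 2^{j}/M\). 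Pinning down the explicit constants \(40(\beta+2)^2\), \(48\lceil\beta\rceil^3 2^{\beta}\), and \(4284(\beta+2)^5 2^{\beta}\) then amounts to tracking the constants of Lemma~\ref{lem:approximation error for hölder} (with \(d=1\), \(r=0\)) together with those of the assembly through this construction.
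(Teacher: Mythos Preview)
The paper does not prove this lemma at all: it is stated verbatim as Theorem~1 of \cite{BosSchmidt-Hieber22} and then invoked as a black box in the proof of Lemma~\ref{lem:approximation error for composition structured probability}. There is therefore no proof in the paper to compare your proposal against.

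For what it is worth, your sketch does reproduce the construction of the cited reference: the dyadic recursion \(y_k=\min(2^kx,1)\) and the telescoping \(\log x=-\sum_k\mu(y_k)\), the scale-adapted accuracies \(\varepsilon_k\asymp 2^k/M\) so that on each ring \(I_j\) the accumulated additive error \(\sum_{k\le j}\varepsilon_k\asymp 2^j/M\) converts via the multiplicative bound into \(|e^{G(x)}-x|\lesssim 1/M\), the geometric summing \(\sum_k M^{1/\beta}2^{-k/\beta}\lesssim_\beta M^{1/\beta}\) that delivers the claimed width, and the final hard clip at \(\log(4/M)\). You also correctly isolate the two structural devices that prevent the degraded accuracies \(\varepsilon_k\) for large \(k\) from polluting the error on fine scales, namely the exact re-centering \(\widetilde\mu_k(1)=0\) and the input clipping forcing \(\widetilde\mu_k\equiv\log 2\) on \([0,\tfrac12]\) up to \(\varepsilon_k\). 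The only thing the paper adds beyond citing the result is nothing; if you want the explicit constants you would indeed have to track them through Lemma~\ref{lem:approximation error for hölder} with \(d=1\) and the assembly rules, exactly as you say.
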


\begin{theorem} \label{thm:approximation error for composition structured functions}
    For every function \(\bm{f} \in \mathcal{G}^\prime\pr{r, \bm{d}, \bm{t}, \boldsymbol{\beta}, Q, K}\) and every integer \(N\) satisfies
    \begin{equation*}
        N \geq 2 \quad \text{and} \quad N^{2\beta} \geq c\left(\max_{\bm{\alpha} < \beta}\{\lVert \partial^{\bm{\alpha}} f \rVert_\infty\} \lor 1\right)^{4(\lfloor \beta \rfloor + 1)}
    \end{equation*}
    for some sufficiently large constant \(c \geq 1\), there exist neural networks \(H_k \in \mathcal{F}_\textup{id}(L, \bm{m})\) with
    \begin{align*}
        &\text{\rm (i) }\; L = 8r + 5 + \sum_{i=0}^r \lceil \log_4 (N^{\frac{2\beta_i}{t_i}}) \rceil (\lceil \log_2(\lfloor \beta_i \rfloor \lor t_i + 1) \rceil + 1), \\
        &\text{\rm (ii) }\; \bm{m} = \pr{d_0, m N, \dots , m N, 1},
    \end{align*}
    such that
    \begin{equation*}
        \Verts{H_k - f_k}_\infty \leq C_{Q, r, \boldsymbol{\beta}, \bm{t}, \bm{d}} \max_{i=0, \dots , r} N^{-\frac{2\beta_i^*}{t_i}}, \; \forall k \in [K],
    \end{equation*}
    where \(C_{Q, r, \boldsymbol{\beta}, \bm{t}, \bm{d}}\) is a constant only depending on \(Q, r, \boldsymbol{\beta}, \bm{t}, \bm{d}\) and
    \begin{equation*}
        m \coloneqq \max_{i = 0, \dots , r}d_{i+1} 2^{t_i + 6} \binom{t_i + \lfloor \beta_i \rfloor}{t_i} t_i^2 (\lfloor \beta_i \rfloor + 1).
    \end{equation*}
\end{theorem}
\begin{proof}
    Without loss of generality, we can assume \(Q \geq 1\).
    As in the first part of Lemma \ref{lem:upper bound for composite function}, we can rewrite
    \begin{equation*}
        f_k = \bm{g}_r^{(k)} \circ \cdots \circ \bm{g}_0^{(k)} = \bm{h}_r^{(k)} \circ \cdots \circ \bm{h}_0^{(k)}.
    \end{equation*}
    For each \(h_{ij}^{(k)}\), applying Lemma A.8 in the main paper, there exists a neural network
    \begin{equation*}
        \Tilde{h}_{ij}^{(k)} \in \mathcal{F}_\text{id}\pr{L_i, \pr{t_i, m_i, \dots , m_i, 1}}
    \end{equation*}
    such that
    \begin{equation*}
        \lVert \Tilde{h}_{ij}^{(k)} - h_{ij}^{(k)} \rVert_\infty \leq C \left(\max_{\bm{\alpha} < \beta}\{\lVert \partial^{\bm{\alpha}} h_{ij}^{(k)} \rVert_\infty\} \lor 1\right)^{4(\lfloor \beta_i \rfloor + 1)} N^{-\frac{2\beta_i}{t_i}}.
    \end{equation*}
    Here, 
    \begin{align*}
        &L_i \geq 5 + \lceil \log_4 (N^{\frac{2\beta_i}{t_i}}) \rceil (\lceil \log_2(\lfloor \beta_i \rfloor \lor t_i + 1) \rceil + 1), \\
        &m_i \geq 2^{t_i + 6} \binom{t_i + \lfloor \beta_i \rfloor}{t_i} t_i^2 (\lfloor \beta_i \rfloor + 1) N.
    \end{align*}

    To clip the output of \(\Tilde{h}_{ij}^{(k)}\) to \([0, 1]\), for \(i < r\), we add two layers and apply the transformation \(1 - (1 - x)_+\).
    Denote the transformed network as
    \begin{equation*}
        h_{ij}^{*(k)} \in \mathcal{F}_{\text{id}}\pr{L_i + 2, \pr{t_i, m_i, \dots , m_i, 1, 1, 1}},
    \end{equation*}
    then, it follows that \(\sigma\pr{h_{ij}^{*(k)}} = \pr{\Tilde{h}_{ij}^{(k)}(x) \lor 0} \land 1\).
    Since \(h_{ij}^{(k)}(\bm{x}) \in [0, 1]\), we have 
    \begin{equation*}
        \Verts{\sigma\pr{h_{ij}^{*(k)}} - h_{ij}^{(k)}}_\infty \leq \Verts{\Tilde{h}_{ij}^{(k)} - h_{ij}^{(k)}}_\infty.
    \end{equation*}

    Due to the parallerization property, \(\bm{h}_i^{*(k)} = (h_{ij}^{*(k)})_{j=1, \dots , d_{i+1}}\) belongs to 
    \begin{equation*}
        \mathcal{F}_\text{id}(L_i^\prime + 2, \pr{d_i, d_{i+1}m_i, \dots , d_{i+1}m_i, d_{i+1}}).
    \end{equation*}
    By composing these networks, we construct \(H_k = \Tilde{\bm{h}}_{r}^{(k)} \circ \sigma(\bm{h}_{r-1}^{*(k)}) \circ \cdots \circ \sigma(\bm{h}_0^{*(k)})\).
    From the composition property, \(H_k\) is realized over \(\mathcal{F}_\text{id}\pr{E, \pr{d_0, m, \dots , m, 1}}\) with \(E = \sum_{i=0}^{r-1} (L_i + 2) + r - 1 + L_r + 1 = 3r + \sum_{i=0}^r L_i\) and \(m \coloneqq \max_{i = 0, \dots , r}d_{i+1}m_i\). Therefore, \(H_k\) is a neural network that meets the conditions of the theorem. Combining the inequalities shown earlier with Lemma \ref{lem:upper bound for composite function}, we have
    \begin{align*}
        \Verts{H_k - f_k}_\infty &= \lVert \Tilde{h}_{r1} \circ \sigma(h_{r-1}^{*(k)}) \circ \cdots \circ \sigma(h_0^{*(k)}) - h_r^{(k)} \circ \cdots \circ h_0^{(k)} \rVert_\infty \\
        &\leq Q_r \prod_{l=0}^{r-1}(2Q_l)^{\beta_{l+1}}\sum_{i=0}^r \Verts{\Tilde{h}_i^{(k)} - h_i^{(k)}}_\infty^{\prod_{l=i+1}^r \beta_l \land 1} \\
        &\leq Q_r \prod_{l=0}^{r-1} \pr{2Q_l}^{\beta_{l+1}}\left\{\sum_{i=0}^r\pr{C \left(\max_{\bm{\alpha} < \beta}\{\lVert \partial^{\bm{\alpha}} h_{ij}^{(k)} \rVert_\infty\} \lor 1\right)^{4(\lfloor \beta_i \rfloor + 1)} N^{-\frac{2\beta_i}{t_i}}}^{\prod_{l=i+1}^r \beta_l \land 1}\right\} \\
        &\leq C_{Q, r, \boldsymbol{\beta}, \bm{t}, \bm{d}}\max_{i = 0, \dots , r} N^{-\frac{2\beta_i^*}{t_i}}.
    \end{align*}
\end{proof}

\begin{lemma}\label{lem:approximation error for composition structured probability}
    For any function \(\bm{f} \in \mathcal{G}^\prime\pr{r, \bm{d}, \bm{t}, \boldsymbol{\beta}, Q, K}\) and any $N \in \Nb$ such that
    \begin{equation*}
        N \geq 2,\; N^{2\beta} \geq c\left(\max_{\bm{\alpha} < \beta}\{\lVert \partial^{\bm{\alpha}} f \rVert_\infty\} \lor 1\right)^{4(\lfloor \beta \rfloor + 1)} \quad \text{and} \quad N \geq \max_{i = 0, \dots , r} K^{t_i / (2\beta_i^*)}(4 + C_{Q, r, \boldsymbol{\beta}, \bm{t}})^{t_i / (2\beta_i^*)},
    \end{equation*}
    there exists a neural network \(\Tilde{\bm{p}} \in \mathcal{F}_{\boldsymbol{\Psi}}(L, \bm{m}, s)\) with
        \begin{align*}
    \text{\rm (i) }\;
        &L = \lfloor 40(2\beta_{i^*} / t_{i^*} + 2)^2 \min_{i=0, \dots , r}\log_2 N^{2\beta_i^* / t_i} \rfloor + 8r + 5\\
        &\quad + \sum_{i=0}^r \lceil \log_4 (N^{2\beta_i / t_i}) \rceil (\lceil \log_2(\lfloor \beta_i \rfloor \lor t_i + 1) \rceil + 1),\\
    \text{\rm (ii) }\;
        &\bm{m} = \pr{d_0, mKN, \dots , mKN, K},
   \end{align*}
    where 
    \begin{equation*}
        i^* \coloneqq \argmax_{i = 0, \dots , r} N^{-2\beta_i^* / t_i} \quad \text{and} \quad m \coloneqq \max_{i = 0, \dots , r}d_{i+1} 2^{2 \beta_i + t_i + 6} \binom{t_i + \lfloor \beta_i \rfloor}{t_i} t_i^2 (\lceil \beta_i \rceil^3 + 1),
    \end{equation*}
    such that,
    \begin{equation*}
        \Verts{\Tilde{\bm{p}} - \bm{f}}_\infty \leq 2K(4 + C_{Q, r, \boldsymbol{\beta}, \bm{t}, \bm{d}})\max_{i=0, \dots , r}N^{-\frac{2\beta_i^*}{t_i}},
    \end{equation*}
    and
    \begin{equation*}
        \Tilde{p}_k(\bm{x}) \geq \max_{i=0, \dots , r}N^{-\frac{2\beta_i^*}{t_i}},\; \forall k \in [K],\; \forall \bm{x} \in [0, 1]^{d_0}.
    \end{equation*}
    Here, constant \(C_{Q, r, \boldsymbol{\beta}, \bm{t}, \bm{d}}\) depending only on \(Q, r, \boldsymbol{\beta}, \bm{t}\), and $\bm{d}$.
\end{lemma}
\begin{proof}
    Combining Lemma \ref{lem:approximation error for exp} with \(\beta = 2\beta_{i^*}^* / t_{i^*}\) where \(i^* \coloneqq \argmax_{i = 0, \dots , r} N^{-2\beta_i^* / t_i}\) and Theorem \ref{thm:approximation error for composition structured functions}, we compose the neural network created by them as \(\bm{G} = \pr{G(H_1), \dots , G(H_K)}\). For each \(k = 1, \dots , K\), we have
    \begin{equation*}
        \Verts{e^{G(H_k)} - f_k}_\infty = \Verts{e^{G(H_k)} - H_k}_\infty + \Verts{H_k - f_k}_\infty \leq (4 + C_{Q, r, \boldsymbol{\beta}, \bm{t}, \bm{d}})\max_{i=0, \dots , r}N^{-\frac{2\beta_i^*}{t_i}}. 
    \end{equation*}
    Here, since \(N \geq K^{t_i / (2\beta_i^*)}(4 + C_{Q, r, \boldsymbol{\beta}, \bm{t}, \bm{d}})^{t_i / (2\beta_i^*)} \geq 4^{t_i / (2\beta_i^*)}\), we use \(1 / M = \max_{i = 0, \dots , r}N^{-2\beta_i^* / t_i}\) in Lemma \ref{lem:approximation error for exp}.
    Now, we define the vector-valued function \(\Tilde{\bm{p}}\) element-wise as
    \begin{equation*}
        \Tilde{p}_k(\bm{x}) = \frac{e^{G(H_k(\bm{x}))}}{\sum_{j=1}^K e^{G(H_j(\bm{x}))}},\quad k = 1, \dots , K.
    \end{equation*}
    Applying the composition, depth synchronization, and parallelization property, we establish that \(\Tilde{\bm{p}} \in \mathcal{F}_{\boldsymbol{\Psi}}(L, \bm{m}, s)\). Furthermore, using the triangle inequality and exploiting the fact that \(\bm{f}\) is a probability vector, we obtain
    \begin{align}
        \Verts{\Tilde{p}_k - f_k}_\infty &\leq \Verts{e^{G(H_k)}\pr{\frac{1}{\sum_{j=1}^K e^{G(H_j)}} - 1}}_\infty + \Verts{e^{G(H_k)} - f_k}_\infty \nonumber \\
        &= \Verts{e^{G(H_k)}\pr{\frac{\sum_{l=1}^K f_l}{\sum_{j=1}^K e^{G(H_j)}} - \frac{\sum_{l=1}^K e^{G(H_l)}}{\sum_{j=1}^K e^{G(H_j)}}}}_\infty + \Verts{e^{G(H_k)} - f_k}_\infty \nonumber \\
        &\leq \pr{\sum_{l=1}^K \Verts{f_l - e^{G(H_l)}}_\infty}\Verts{\frac{e^{G(H_k)}}{\sum_{j=1}^K e^{G(H_j)}}}_\infty + \Verts{e^{G(H_k)} - f_k}_\infty \nonumber \\
        &\leq (K+1)(4 + C_{Q, r, \boldsymbol{\beta}, \bm{t}})\max_{i=0, \dots , r}N^{-\frac{2\beta_i^*}{t_i}} \leq 2K(4 + C_{Q, r, \boldsymbol{\beta}, \bm{t}})\max_{i=0, \dots , r}N^{-\frac{2\beta_i^*}{t_i}}. \nonumber
    \end{align}
    Moreover, using the second inequality of Lemma \ref{lem:approximation error for exp} and the first bound of the lemma, we have
    \begin{align*}
        \Tilde{p}_k(\bm{x}) &\geq \frac{4\max_{i=0, \dots , r}N^{-2\beta_i^* / t_i}}{\sum_{j=1}^K e^{G(H_j)}} \geq \frac{4\max_{i=0, \dots , r}N^{-2\beta_i^* / t_i}}{1 + K(4 + C_{Q, r, \boldsymbol{\beta}, \bm{t}})\max_{i=0, \dots , r}N^{-2\beta_i^* / t_i}} \\
        &= \frac{4}{1 / \max_{i=0, \dots , r}N^{-2\beta_i^* / t_i} + K(4 + C_{Q, r, \boldsymbol{\beta}, \bm{t}})} \geq \max_{i=0, \dots , r} N^{-\frac{2\beta_i^*}{t_i}}.
    \end{align*}
\end{proof}

\begin{proof}[Proof of Theorem \ref{thm:convergence rates with non-sparse DNN}]
    Set \(\mathcal{F}_n = \mathcal{F}_{\boldsymbol{\Psi}}\pr{L, \bm{m}, B}\). Also, for a sufficiently small constant \(c^\prime\), define
    \begin{equation*}
        N = \left\lceil c^\prime \max_{i=0, \dots , r} K^{\frac{t_i}{2(\beta_i^* + t_i)}}n^{\frac{t_i}{2(\beta_i^* + t_i)}} \right\rceil, \;
        \phi_n^\prime \coloneqq \max_{i = 0, \dots , r} K^{\frac{2\beta_i^* + 3t_i}{(1 + \alpha)\beta_i^* + t_i}}n^{-\frac{\beta_i^*}{\beta_i^* + t_i}},
    \end{equation*}
    where \(\phi_n^\prime\) represents the convergence rate with the number of classes \(K\).
    Since the Hellinger distance is bounded, the convergence rate for \(\phi_n^\prime \geq 1\) is evident. Thus, consider the case \(\phi_n^\prime \leq 1\). From \(\phi_n^\prime \leq 1\), we have \(K^{t_i / 2\beta_i^*} \leq n^{t_i / 2(2\beta_i^* + 3t_i)} \leq n^{t_i / 2(\beta_i^* + t_i)}\) for all \(i = 0, \dots , r\).
    Thus, for sufficiently large \(n\), the assumption of Lemma \ref{lem:approximation error for composition structured probability} holds. Therefore, take \(\Tilde{\bm{p}}_n\) as \(\Tilde{\bm{p}}\) in Lemma \ref{lem:approximation error for composition structured probability}.

    For any \(\delta > 0\), \(\bar{\mathcal{F}}_n^{1/2}(\delta, \Tilde{\bm{p}}_n) \subset \bar{\mathcal{F}}_n^{1/2}(\Tilde{\bm{p}}_n) \coloneqq \bar{\mathcal{F}}_n^{1/2}(\infty, \Tilde{\bm{p}}_n)\), thus for any \(u>0\),
    \begin{equation*}
        N_{2, B}\pr{u, \bar{\mathcal{F}}_n^{1/2}\pr{\delta, \Tilde{\bm{p}}_n}, \mu} \leq N_{2, B}\pr{u, \bar{\mathcal{F}}_n^{1/2}\pr{\Tilde{\bm{p}}_n}, \mu}.
    \end{equation*}

    Next, we show that for any \(u>0\), \(N_{2, B}(u, \bar{\mathcal{F}}_n^{1/2}(\Tilde{\bm{p}}_n), \mu) \leq N_{2, B}(2\sqrt{2}\max_{i=0, \dots , r}N^{-\beta_i^* / t_i}u, \mathcal{F}_n, \mu).\)
    Let \(\{\pr{\bm{p}_i^U, \bm{p}_i^L}\}_{i=1}^M\) be the smallest \(2\sqrt{2}\max_{i=0, \dots , r}N^{-\beta_i^* / t_i}u\)-bracketing set of \(\mathcal{F}\) for \(L^2(\mu)\) norm, i.e., \(M = N_{2, B}(2\sqrt{2}\max_{i=0, \dots , r}N^{-\beta_i^* / t_i}u, \mathcal{F}_n, \mu)\) and for any \(\bm{p} \in \mathcal{F}_n\), there exists \(j \in [N]\) such that \(\bm{p}_j^L \leq \bm{p} \leq \bm{p}_j^U\) and \(\int (\bm{p}_j^U - \bm{p}_j^L)^2 d\mu \leq 8\max_{i=0, \dots , r}N^{-\beta_i^* / t_i}u^2\). Then, for any \(\bm{p} \in \mathcal{F}_n\), there exists \(j \in [N]\) such that \(\sqrt{(\bm{p}_j^L + \Tilde{\bm{p}}_n)/2} \leq \sqrt{(\bm{p} + \Tilde{\bm{p}}_n)/2} \leq \sqrt{(\bm{p}_j^U + \Tilde{\bm{p}}_n)/2}\), and
    \begin{align*}
        \int \left(\sqrt{\frac{\bm{p}_j^U + \Tilde{\bm{p}}_n}{2}} - \sqrt{\frac{\bm{p}_j^L + \Tilde{\bm{p}}_n}{2}}\right)^2 d\mu
        \leq \frac{1}{8\max_{i=0, \dots , r}N^{-2\beta_i^* / t_i}} \int_\mathcal{X} \sum_{k=1}^K \pr{p_{j, k}^U - p_{j, k}^L}^2 dP_{\bm{X}}(\bm{x})
        \leq u^2.
    \end{align*}
    Thus, the statement is proven.

    From the above, for any \(u>0\), we obtain
    \[
    N_{2, B}(u, \bar{\mathcal{F}}_n^{1/2}\pr{\delta, \Tilde{\bm{p}}_n}, \mu) \leq N_{2, B}\pr{2\sqrt{2}\max_{i=0, \dots , r}N^{-\beta_i^* / t_i}u, \mathcal{F}_n, \mu}.
    \]
    Additionally, for any \(\bm{p}_1, \bm{p}_2 \in \mathcal{F}_{\boldsymbol{\Psi}}(L, \bm{m}, B)\), there exist \(\bm{f}_1, \bm{f}_2 \in \mathcal{F}_{\mathrm{id}}(L, \bm{m}, B)\) such that
    \begin{align*}
        &\Verts{\bm{p}_1 - \bm{p}_2}_\infty = \Verts{\boldsymbol{\Psi}\pr{\bm{f}_1} - \boldsymbol{\Psi}\pr{\bm{f}_2}}_\infty
        = \Verts{\max_{k = 1, \dots , K} \verts{\frac{e^{f_{1, k}(\bm{x})}}{\sum_{l=1, \dots , K} e^{f_{1, l}(\bm{x})}} - \frac{e^{f_{2, k}(\bm{x})}}{\sum_{l=1, \dots , K} e^{f_{2, l}(\bm{x})}}}}_{\infty} \\
        &\leq \Verts{\max_{k = 1, \dots , K} \frac{\sum_{j \neq k} (e^{f_{1, k}(\bm{x}) + f_{2, j}(\bm{x})} + e^{f_{1, j}(\bm{x}) + f_{2, k}(\bm{x})})\verts{f_{1, k}(\bm{x}) + f_{2, j}(\bm{x}) - f_{1, j}(\bm{x}) - f_{2, k}(\bm{x})}}{\pr{\sum_{l=1, \dots , K} e^{f_{1, l}(\bm{x})}}\pr{\sum_{m=1, \dots , K} e^{f_{2, m}(\bm{x})}}}}_{\infty} \\
        &\leq \Verts{\max_{k = 1, \dots , K} \pr{(K-1)\verts{f_{1, k}(\bm{x}) - f_{2, k}(\bm{x})} + \sum_{j \neq k}\verts{f_{1, j}(\bm{x}) - f_{2, j}(\bm{x})}}}_{\infty} \\
        &\leq \Verts{2(K - 1)\max_{k = 1, \dots , K} \verts{f_{1, k}(\bm{x}) - f_{2, k}(\bm{x})}}_{\infty} = 2(K - 1)\Verts{\bm{f}_1 - \bm{f}_2}_\infty.
    \end{align*}
    Here, for the first inequality, we use the fact that  \(\verts{e^x - e^y} \leq (e^x + e^y)\verts{x - y}\) for all $x,y \in \mathbb{R}$. Therefore, combining Lemmas \ref{lem:bracketing and covering with sup norm} and \ref{lem:covering number of DNN}, we have
    \begin{align*}
        \log \left(N_{2, B}\left(2\sqrt{2}\max_{i=0, \dots , r}N^{-\beta_i^* / t_i}u, \mathcal{F}_n, \mu\right)\right)
        &\leq \log \left(N\left(\frac{\sqrt{2}\max_{i=0, \dots , r}N^{-\beta_i^* / t_i}}{\sqrt{K}}u, \mathcal{F}_n, \Verts{\; \cdot \;}_\infty\right)\right) \\
        &\leq \log\pr{N\pr{\frac{\max_{i=0, \dots , r}N^{-\beta_i^* / t_i}u}{(K - 1)\sqrt{2K}}, \mathcal{F}_{\mathrm{id}}(L, \bm{m}, B), \Verts{\; \cdot \;}_\infty}} \\
        &\leq 2sL\log\pr{\frac{\sqrt{2}(K-1)\sqrt{K}(B \lor 1)(\lvert \bm{m} \rvert_\infty + 1)L}{\max_{i=0, \dots , r}N^{-\beta_i^* / t_i}}u^{-1}}.
    \end{align*}
    Here, \(s\) is the total number of parameters of DNN.
    Therefore, let \(A \coloneqq \sqrt{2}(K-1)\sqrt{K}(B \lor 1)(\lvert \bm{m} \rvert_\infty + 1)L / \max_{i=0, \dots , r}N^{-\beta_i^* / t_i}\), we have
    \begin{align*}
        &\int_{\delta^2/(2^{13}c_0)}^\delta \sqrt{\log N_{2, B}\pr{u, \Bar{\mathcal{F}}_n^{1/2}(\Tilde{\bm{p}}_n, \delta), \mu}} \; du
        \leq \sqrt{2sL}\int_0^\delta\sqrt{\log\left(\frac{A}{u}\right)} du \\
        &= \sqrt{2sL}\int_{\sqrt{\log(A/\delta)}}^\infty 2Ax^2e^{-x^2} dx \quad \left(\sqrt{\log(A/u)} = x\right) \\
        &= A\sqrt{2sL}\int_{\sqrt{\log(A/\delta)}}^\infty \left(-e^{-x^2}\right)^\prime x dx 
        = A\sqrt{2sL}\left\{\sqrt{\log(A/\delta)}\frac{\delta}{A} + \sqrt{\pi}\frac{1}{\sqrt{\pi}}\int_{\sqrt{\log(A/\delta)}}^\infty e^{-x^2} dx\right\} \\
        &= A\sqrt{2sL}\left\{\sqrt{\log(A/\delta)}\frac{\delta}{A} + \frac{\sqrt{\pi}}{2}\mathbb{P}\left(\lvert Z \rvert \geq \sqrt{2\log(A/\delta)}\frac{1}{\sqrt{2}}\right)\right\} \quad (Z \sim N(0, 1 / 2)) \\
        &\leq A\sqrt{2sL}\left\{\sqrt{\log(A/\delta)}\frac{\delta}{A} + \frac{\sqrt{\pi}}{2}\frac{2}{\exp\left(\log(A/\delta)\right)}\right\} \quad (\because \text{Gaussian tail bound}) \\
        &= \delta\sqrt{2sL}\left(\sqrt{\log(A/\delta)} + \sqrt{\pi}\right).
    \end{align*}
    Using the triangle inequality and the bound of \ref{lem:approximation error for composition structured probability}, for any \(k \in [K]\) and any \(\bm{x} \in \mathcal{X}\), we have
    \begin{equation*}
        \eta_k(\bm{x}) - \Tilde{p}_{n, k}(\bm{x}) \leq K(4 + C_{Q, r, \boldsymbol{\beta}, \bm{t}}) \max_{i=0, \dots , r}N^{-\frac{2\beta_i^*}{t_i}}\Tilde{p}_{n, k}(\bm{x}) + (4 + C_{Q, r, \boldsymbol{\beta}, \bm{t}, \bm{d}}) \max_{i=0, \dots , r}N^{-\frac{2\beta_i^*}{t_i}}.
    \end{equation*}
    Since \(\Tilde{p}_{n, k}(\bm{x}) \geq \max_{i=0, \dots , r} N^{-2\beta_i^* / t_i}\), dividing both sides by \(\Tilde{p}_{n, k}(\bm{x})\) yields
    \begin{align*}
        \frac{\eta_k(\bm{x})}{\Tilde{p}_{n, k}(\bm{x})} &\leq 1 + K(4 + C_{Q, r, \boldsymbol{\beta}, \bm{t}, \bm{d}}) \max_{i=0, \dots , r}N^{-\frac{2\beta_i^*}{t_i}} + 4 + C_{Q, r, \boldsymbol{\beta}, \bm{t}}
        \leq 6 + C_{Q, r, \boldsymbol{\beta}, \bm{t}, \bm{d}} \leq C_{Q, r, \boldsymbol{\beta}, \bm{t}, \bm{d}}.
    \end{align*}
    Here, the second inequality uses \(N \geq \max_{i = 0, \dots , r}K^{t_i / (2\beta_i^*)}(4 + C_{Q, r, \boldsymbol{\beta}, \bm{t}, \bm{d}})^{t_i / (2\beta_i^*)}\), and the third inequality involves an appropriate adjustment of the constant \(C_{Q, r, \boldsymbol{\beta}, \bm{t}, \bm{d}}\). Therefore, Assumption \ref{assumption} is satisfied, allowing the application of Corollary \ref{cor:expectation oracle inequality}. Setting \(\Psi(\delta) = \delta\sqrt{2sL}(\sqrt{\log(A / \delta)} + \sqrt{\pi})\) and \(c = 1\) in Corollary \ref{cor:expectation oracle inequality}, and choosing \(\delta_n = \sqrt{2sL}(\sqrt{\log(\sqrt{n}A)} + \sqrt{2\pi}) / \sqrt{n}\) to satisfy \eqref{eq:critical inequality}, we obtain
    \begin{equation}
        \mathbb{E}\pr{R\pr{\hat{\bm{p}}_n, \boldsymbol{\eta}}} \leq 514(1 + c_0^2)\pr{\frac{(2sL)(\sqrt{\log(nA)} + \sqrt{2\pi})^2}{n} + R\pr{\Tilde{\bm{p}}_n, \boldsymbol{\eta}}} + \frac{1}{n}. \label{eq:variance bound}
    \end{equation}
    From the proof of Lemma \ref{lem:approximation error for hölder} (see \cite{KohlerLanger21} for details), it follows that the magnitude of the parameters of \(\tilde{\bm{p}}_n\) has order \(\max_{i = 0, \dots , r}N^{(2\beta_i + 2) / t_i}\).
    From this, the choice of \(N\), and depth synchronization property, \(\Tilde{\bm{p}}_n\) belongs to \(\mathcal{F}_{\boldsymbol{\Psi}}(L, \bm{m}, B)\) with \(L, \bm{m}, B\) in the statement of this theorem.
    Further, the bound of Lemma \ref{lem:approximation error for composition structured probability} implies
    \begin{align*}
        R\pr{\Tilde{\bm{p}}_n, \boldsymbol{\eta}} 
        &= \mathbb{E}_{\bm{X}}\sqbr{H^2\pr{\Tilde{\bm{p}}_n, \boldsymbol{\eta}}} 
        = \mathbb{E}_{\bm{X}}\sqbr{\frac{1}{2}\sum_{k=1}^K \pr{\sqrt{\Tilde{p}_k(\bm{X})} - \sqrt{\eta_k(\bm{X})}}^2}\\
        &\leq \mathbb{E}_{\bm{X}}\sqbr{\frac{1}{2}\sum_{k=1}^K \verts{\Tilde{p}_k(\bm{X}) - \eta_k(\bm{X})}}
        \leq K^2(4 + C_{Q, r, \bm{\beta}, \bm{t}, \bm{d}}) \max_{i = 0, \dots, r} N^{-\frac{2\beta_i^*}{t_i}} \lesssim \phi_n^\prime.
    \end{align*}
    Combining this with \eqref{eq:variance bound} and \(s \asymp N^2\log(N) \asymp \max_{i=0, \dots , r} n^{t_i / (\beta_i^* + t_i)} \log(n)\), Theorem \ref{thm:convergence rates with non-sparse DNN} follows.
\end{proof}

\subsection{Proof of Theorem \ref{thm:minimax}}
The proof of the Theorem \ref{thm:minimax} is obtained with non-trivial modifications of the proof of Theorem 3 in \cite{Schmidt-Hieber20}.
\begin{proof}
   Let \(\gamma\) and \(\Gamma\) be the lower and upper bounds of the density function \(p_{\bm{X}}\) of \(\bm{X}\), respectively. According to Theorem 2.7 of \cite{Tsybakov2008}, if for some \(M > 1\) and \(\kappa > 0\), there exist \(\bm{p}_{(0)}, \dots, \bm{p}_{(M)} \in \mathcal{G}^\prime\pr{r, \bm{d}, \bm{m}, \boldsymbol \beta, Q, K}\) such that
    \begin{align*}
        \text{\rm (i) } R\pr{\bm{p}_{(j)}, \bm{p}_{(k)}} \geq \kappa\phi_n
        \;\text{ for all }\;0 \leq j < k \leq M,
        \;\text{ and }\;
        \text{\rm (ii) } 
        \sum_{j = 1}^M\operatorname{KL}\pr{\bm{P}_{(j)} \parallel \bm{P}_{(0)}} \leq \frac{M\log M}{9},
    \end{align*}
    then, there exists a positive constant \(c = c(\kappa, \gamma)\) such that
    \begin{equation*}
        \inf_{\hat{\bm{p}}_n}\sup_{\boldsymbol{\eta} \in \mathcal{G}^\prime\pr{r, \bm{d}, \bm{m}, \boldsymbol \beta, Q, K}} \mathbb{E}_{\mathcal{D}_n}\left[R\pr{\hat{\bm{p}}_n, \boldsymbol{\eta}}\right] \geq c\phi_n.
    \end{equation*}
    Here, \(\bm{P}_{(j)}\) denotes the distribution of the data when the conditional class probability is \(\bm{p}_{(j)}\). Next, we construct \(\bm{p}_{(0)}, \dots, \bm{p}_{(M)} \in \mathcal{G}^\prime\pr{r, \bm{d}, \bm{m}, \boldsymbol \beta, Q, K}\) that satisfy (\romannumeral 1) and (\romannumeral 2). Let \(i^* \in \arg\min_{i= 0, \dots , r}\beta_i^* / (\beta_i^* + t_i)\). For convenience, we denote \(\beta^* \coloneqq \beta_{i^*}, \beta^{**} \coloneqq \beta_{i^*}^*, t^* \coloneqq t_{i^*}\). Assume a function \(\xi \geq 0\) belongs to \(L^1(\mathbb{R}) \cap L^2(\mathbb{R}) \cap \mathcal{C}^{\beta^*}(\mathbb{R}, 1)\) with support on \([0, 1]\) and \(\lVert S \rVert_\infty = 1\). The function \(\xi\) can be taken as, for example, \(\xi(z) \coloneqq b2^{2\beta^*} z^{\beta^*}(1 - z)^{\beta^*}\mathbbm{1}(z \in [0, 1]),\)
    where \(b > 0\) is constant such that \(\xi \in \mathcal{C}^{\beta^*}(\mathbb{R}, T)\).
    Furthermore, let \(m_n \coloneqq \lceil \rho K^{1 / \beta^{**}} n^{1/(\beta^{**} + t^*)} \rceil, h_n \coloneqq 1 / m_n\), and choose the constant \(\rho \geq 1\) such that
    \begin{equation*}
        n(K-1)h_n^{\beta^{**} + t^*} \leq \frac{1}{144\Gamma \log_2 (e) \pr{\lVert \xi^B \rVert_1^{t^*} + \lVert \xi^B \rVert_2^{2t^*}}},
    \end{equation*}
    which \(B := \prod_{l=i^*+1}^r(\beta_l \land 1)\).
    
    \noindent
    For any \(\bm{u} = (u_1, \dots , u_{t^*}) \in \mathcal{U}_n := \{(u_1, \dots , u_{t^*}) \mid u_i \in \{0, h_n, 2h_n, \dots , (m_n - 1)h_n\}\}\), we define
    \begin{equation*}
        \psi_{\bm{u}}(x_1, \dots , x_{t^*}) := h_n^{\beta^*}\prod_{j=1}^{t^*}\xi\left(\frac{x_j - u_j}{h_n}\right).
    \end{equation*}
    Since \(\xi \in \mathcal{C}^{\beta^*}(\mathbb{R}, T)\), for \(\lvert \boldsymbol  \alpha \rvert < \beta^*\), we have \(\lVert \partial^{\boldsymbol \alpha}\psi_{\bm{u}} \rVert_\infty \leq 1.\)
    Moreover, for \(\lvert \boldsymbol \alpha \rvert = \lfloor \beta^* \rfloor\), using the triangle inequality and \(\xi \in \mathcal{C}^{\beta^*}(\mathbb{R}, 1)\), we also have
    \begin{equation*}
        h_n^{\beta^* - \lfloor \beta^* \rfloor}\frac{\left\lvert \prod_{j=1}^{t^*}\xi\left(\frac{x_j - u_j}{h_n}\right) - \prod_{j=1}^{t^*}\xi\left(\frac{y_j - u_j}{h_n}\right)\right\rvert}{\max_{i = 1, \dots , t^*} \lvert x_i - y_i \rvert^{\beta^*-\lfloor\beta^*\rfloor}} \leq t^*.
    \end{equation*}
    Therefore, \(\psi_{\bm{u}} \in \mathcal{C}^{\beta^*}([0, 1]^{t^*}, (\beta^*)^{t^*}t^*)\). For \(\bm{w} = (w_{\bm{u}}) \in \{0, 1\}^{\lvert \mathcal{U}_n \rvert}\), we define $\phi_{\bm{w}} = \sum_{\bm{u} \in \mathcal{U}_n}w_{\bm{u}}\psi_{\bm{u}}.$
    From the construction, \(\psi_{\bm{u}}\) and \(\psi_{\bm{u}^\prime}\) have disjoint supports for \(\bm{u}, \bm{u}^\prime \in \mathcal{U}_n,\; \bm{u} \neq \bm{u}^\prime\). Consequently, \(\phi_{\bm{w}} \in \mathcal{C}^{\beta^*}([0, 1]^{t^*}, 2(\beta^*)^{t^*}t^*)\). For \(i < i^*\), define \(g_i(\bm{x}) := (x_1, \dots , x_{d_i})^\top\); for \(i = i^*\), define \(g_{i^*, \bm{w}}(\bm{x}) := (\phi_{\bm{w}}(x_1, \dots , x_{t_{i^*}}), 0, \dots , 0)^\top\); for \(i > i^*\), define \(g_i(\bm{x}) := (x_1^{\beta_i \land 1}, 0, \dots , 0)^\top\).
    Since \(t_i \leq \min(d_0, \dots , d_{i-1})\) and each \(\psi_{\bm{u}}\) have disjoin support, we have
    \begin{align*}
    f_{\bm{w}} &:= g_r \circ \cdots \circ g_{i^*+1} \circ g_{i^*, \bm{w}} \circ g_{i^*-1} \circ \cdots \circ g_0(\bm{x}) 
    = \phi_{\bm{w}}(x_1, \dots , x_{t_i^*})^B 
    = \sum_{\bm{u} \in \mathcal{U}_n}w_{\bm{u}}\psi_{\bm{u}}(x_1, \dots , x_{t_i^*})^B.
    \end{align*}
    Then, if we choose $Q$ to be sufficiently large, then $f_{\bm{w}}$ belongs to $\mathcal{G}(r, \bm{d}, \bm{m}, \boldsymbol \beta, Q)$.
    Furthermore, for $W = (\bm{w}_1, \dots , \bm{w}_K) \in \{0, 1\}^{(K-1) \times \lvert \mathcal{U}_n \rvert}$, we define
    \begin{equation*}
    \bm{p}_{W} \coloneqq \left(f_{\bm{w}_1}, \dots , f_{\bm{w}_{K-1}}, 1 - \sum_{k=1}^{K-1}f_{\bm{w}_k}\right).
    \end{equation*}
    From the definition of \(m_n\) and \(\xi\), this $\bm{p}_{W}$ belongs to $\mathcal{G}^\prime(r, \bm{d}, \bm{m}, \boldsymbol \beta, Q, K)$.
    It should be noted that $\beta^*B = \beta^{**}$. For any $\bm{u}$ the following equations hold:
    \begin{align*}
    \lVert \psi_{\bm{u}}^B \rVert_1 
    &= h_n^{\beta^*B}\prod_{j=1}^{t^*} \int \xi\left(\frac{x_j - u_j}{h_n}\right)^B dx_j
    = h_n^{\beta^{**} + t^*}\prod_{j=1}^{t^*}\int \xi(x_j)^B dx_j 
    = h_n^{\beta^* + t^*}\lVert \xi^B \rVert_1^{t^*}\;\text{ and } \\
    \lVert \psi_{\bm{u}}^B \rVert_2 &= h_n^{2\beta^* + t^*}\lVert \xi^B \rVert_2^{2t^*}.
    \end{align*}
    
    Here, $\operatorname{Ham}(W, W^\prime) := \sum_{k=1}^K\sum_{\bm{u} \in \mathcal{U}_n}\mathbbm{1}(w_{k, \bm{u}} \neq w_{k, \bm{u}}^\prime)$ represents the Hamming distance, and considering the disjoint support of $\psi_{\bm{u}}$, for $W \neq W^\prime$, we have
    \begin{align*}
    R(\bm{p}_W, \bm{p}_{W^\prime}) &\geq \int \sum_{k=1}^{K-1} \left(\sqrt{f_{\bm{w}_k}(\bm{x})} - \sqrt{f_{\bm{w}_k^\prime}(\bm{x})}\right)^2 dP_{\bm{X}}(\bm{x}) \\
    &= \int \sum_{k=1}^{K-1} \left(\sqrt{\sum_{\bm{u} \in \mathcal{U}_n}w_{k, \bm{u}}\psi_{\bm{u}}^B(\bm{x})} - \sqrt{\sum_{\bm{u} \in \mathcal{U}_n}w_{k, \bm{u}}^\prime\psi_{\bm{u}}^B(\bm{x})}\right)^2 dP_{\bm{X}}(\bm{x}) \\
    &= \int \sum_{k=1}^{K-1} \sum_{\bm{u} \in \mathcal{U}_n}\left(\sqrt{w_{k, \bm{u}}\psi_{\bm{u}}^B(\bm{x})} - \sqrt{w_{k, \bm{u}}^\prime\psi_{\bm{u}}^B(\bm{x})}\right)^2 dP_{\bm{X}}(\bm{x}) \\
    &= \int \sum_{k=1}^{K-1} \sum_{\bm{u} \in \mathcal{U}_n} \mathbbm{1}(w_{k, \bm{u}} \neq w_{k, \bm{u}}^\prime) \psi_{\bm{u}}^B(\bm{x}) dP_{\bm{X}}(\bm{x}) \\
    &\geq \operatorname{Ham}(W, W^\prime) h_n^{\beta^{**} + t^*}\Verts{\xi}_1^{t^*}\gamma.
    \end{align*}
    
    Here, using the Varshamov-Gilbert bound (Lemma 2.9 in \cite{Tsybakov2008}) and the fact that $m_n^{t^*} = \verts{\mathcal{U}_n}$, there exists a subset $\mathcal{W} \subset \{0, 1\}^{(K - 1) \times m_n^{t^*}}$ that includes the matrix where all elements are one and satisfies $\verts{\mathcal{W}} \geq 2^{m_n^{t^*}(K-1)/8}$. Furthermore, for all $W, W^\prime \in \mathcal{W}$, $\operatorname{Ham}(W, W^\prime) \geq m_n^{t^*}(K-1)/8$ holds.
    
    Therefore, by choosing $\kappa = \Verts{\xi^B}_1^{-t^*}\gamma / 8\rho^{\beta^{**}}K$, we have
    $
    R(\bm{p}_W, \bm{p}_{W^\prime})  
    \geq h_n^{\beta^{**}}\Verts{S^B}_1^{t^*}\gamma/8  \geq \kappa\phi_n.
    $
    When $W = (\bm{1}, \dots , \bm{1})$ where $\bm{1} = (1, \dots , 1)^\top$, the distribution of the data is denoted as $\bm{P}_{(0)}$. For any $W \in \mathcal{W}, W \neq (\bm{1}, \dots , \bm{1})$, we have
    \begin{align*}
    \KL{\bm{P}_W}{\bm{P}_{(0)}} 
    &\leq n\Gamma\int \sum_{k=1}^{K-1} \frac{\left(\sum_{\bm{u} \in \mathcal{U}_n}w_{k, \bm{u}}\psi_{\bm{u}}^B(\bm{x}) - \sum_{\bm{u} \in \mathcal{U}_n}\psi_{\bm{u}}^B(\bm{x})\right)^2}{\sum_{\bm{u} \in \mathcal{U}_n}\psi_{\bm{u}}^B(\bm{x})} d\bm{x} \\
    &\quad+ n\Gamma\int \frac{\left(\left(1 - \sum_{k=1}^{K-1}\sum_{\bm{u} \in \mathcal{U}_n}w_{k, \bm{u}}\psi_{\bm{u}}^B(\bm{x})\right) - \left(1 - \sum_{k=1}^{K-1}\sum_{\bm{u} \in \mathcal{U}_n}\psi_{\bm{u}}^B(\bm{x})\right)\right)^2}{1 - \sum_{k=1}^{K-1}\sum_{\bm{u} \in \mathcal{U}_n}\psi_{\bm{u}}^B(\bm{x})} d\bm{x} \\
    &= (\mathrm{I}) + (\mathrm{II}).
    \end{align*}
    
    For $(\mathrm{I})$, considering the disjoint support of $\psi$, we have
    \begin{align*}
    (\mathrm{I}) &= n\Gamma\int \sum_{k=1}^{K-1} \frac{\left(\sum_{\bm{u} \in \mathcal{U}_n}(w_{k, \bm{u}} - 1)\psi_{\bm{u}}^B(\bm{x})\right)^2}{\sum_{\bm{u} \in \mathcal{U}_n}\psi_{\bm{u}}^B(\bm{x})} d\bm{x} 
    = n\Gamma\int \sum_{k=1}^{K-1}\sum_{\bm{u} \in \mathcal{U}_n}\mathbbm{1}(w_{k, \bm{u}} = 0) \frac{\psi_{\bm{u}}^{2B}(\bm{x})}{\sum_{\bm{u} \in \mathcal{U}_n}\psi_{\bm{u}}^B(\bm{x})} d\bm{x} \\
    &\leq n\Gamma\sum_{k=1}^{K-1}\sum_{\bm{u} \in \mathcal{U}_n}\int \psi_{\bm{u}}^B(\bm{x}) d\bm{x} 
    = n\Gamma(K-1)m_n^{t^*}h_n^{\beta^{**} + t^*}\Verts{\xi^B}_1^{t^*}.
    \end{align*}
    
    For $(\mathrm{II})$, we have
    \begin{align*}
    (\mathrm{II}) &= n\Gamma\int \frac{\left(\sum_{k=1}^{K-1}\sum_{\bm{u} \in \mathcal{U}_n} (1 - w_{k, \bm{u}})\psi_{\bm{u}}^B(\bm{x})\right)^2}{1 - \sum_{k=1}^{K-1}\sum_{\bm{u} \in \mathcal{U}_n}\psi_{\bm{u}}^B(\bm{x})} d\bm{x} 
    \leq n\Gamma\int \frac{\sum_{\bm{u} \in \mathcal{U}_n}\mathbbm{1}(w_{k, \bm{u}} = 0)\left(\sum_{k=1}^{K-1}\psi_{\bm{u}}^B(\bm{x})\right)^2}{1 - \sum_{k=1}^{K-1}\sum_{\bm{u} \in \mathcal{U}_n}\psi_{\bm{u}}^B(\bm{x})} d\bm{x} \\
    &\leq n\Gamma (K-1)^2m_n^{t^*}\int \frac{\psi_{\bm{u}}^{2B}(\bm{x})}{1 - (K-1)h_n^{\beta^{**}}} d\bm{x} 
    = n\Gamma (K-1)^2m_n^{t^*}\int \frac{m_n^{\beta^{**}}\psi_{\bm{u}}^{2B}(\bm{x})}{m_n^{\beta^{**}} - (K-1)} d\bm{x} \\
    &\leq n\Gamma(K - 1)^2m_n^{t^*}h_n^{\beta^{**} + t^*}\Verts{\xi^B}_2^{2t^*}.
    \end{align*}
    
    Here, we used the definition of $m_n$ to show that $m_n^{\beta^{**}} - (K-1) \geq 1$. Therefore, using the definition of $\rho$, we obtain
    \begin{equation*}
    \KL{\bm{P}_W}{\bm{P}_{(0)}} \leq 2n\Gamma(K-1)^2m_n^{t^*}h_n^{\beta^{**} + t^*}\left(\Verts{\xi^B}_1^{t^*} + \Verts{\xi^B}_2^{2t^*}\right) 
    \leq \frac{\log \verts{\mathcal{W}}}{9}.
    \end{equation*}
    Thus, the theorem is proven.
\end{proof}

\section{Other results}

\subsection{Convergence rates for anisotropic Besov space} \label{app: convergence rates for Besov space}
In this section, we derive the convergence rate of the maximum likelihood estimator using deep learning when the true conditional class probability \(\bm{\eta}\) belongs to an anisotropic Besov space.
First, we introduce the definition of the anisotropic Besov space. The notation and terminology follow those in \cite{Suzuki&Nitanda2021}.
In this section, for given \(\bm{\beta} = (\beta_1, \dots , \beta_d) \in \mathbb{R}^d\), we write \(\tilde{\bm{\beta}} \coloneqq (\sum_{j=1}^d 1 / \beta_j)^{-1}\) and \(\bar{\bm{\beta}} \coloneqq \max_{j=1, \dots , d} \beta_j\).

Given a function \( f : \mathbb{R}^d \rightarrow \mathbb{R} \), the \( r \)-th difference of \( f \) in the direction \( h \in \mathbb{R}^d \) is defined as
\begin{equation*}
    \Delta^r_h(f)(x) := \Delta^{r-1}_h(f)(x+h) - \Delta^{r-1}_h(f)(x),
\end{equation*}
with \(\Delta^0_h(f)(x) := f(x), \)
where \( x \in [0, 1]^d \) and \( x+rh \in [0, 1]^d \). If this condition is not satisfied, then we define \( \Delta^r_h(f)(x) = 0 \).
Moreover, for a function \( f \in L^p([0, 1]^d) \) where \( p \in (0, \infty] \), the \( r \)-th modulus of smoothness of \( f \) is defined as
\[
w_{r,p}(f,t) = \sup_{h \in \mathbb{R}^d : |h_i| \leq t_i} \|\Delta^r_h(f)\|_p,
\]
where \( t = (t_1, \dots, t_d) \) and \( t_i > 0 \).
Using this modulus of smoothness, we define the anisotropic Besov space \( B^{\bm{\beta}}_{p,q}([0, 1]^d) \) for \( \bm{\beta} = (\beta_1, \dots, \beta_d) \in \mathbb{R}^{d}_{+} \) as follows.
Here, we let \(\mathbb{R}_+ \coloneqq \{x > 0 \mid x \in \mathbb{R}\}\).

\begin{Def} [Anisotropic Besov space]
    For \( 0 < p, q \leq \infty, \bm{\beta} = (\beta_1, \dots, \beta_d) \in \mathbb{R}^{d}_{+}, r := \max_i \beta_i + 1 \), let the seminorm \( |\cdot|_{B^{\bm{\beta}_{p,q}}} \) be
    \[
    |f|_{B^{\bm{\beta}}_{p,q}} := 
    \begin{cases} 
    \left( \sum_{k=0}^{\infty} \left[ 2^k w_{r,p}\left(f,\left(2^{-k/\beta_1}, \dots, 2^{-k/\beta_d}\right)\right) \right]^q \right)^{1/q} & \text{if } q < \infty, \\
    \sup_{k \geq 0} \, 2^k w_{r,p}\left(f,\left(2^{-k/\beta_1}, \dots, 2^{-k/\beta_d}\right)\right) & \text{if } q = \infty.
    \end{cases}
    \]
    The norm of the anisotropic Besov space \( B^{\bm{\beta}}_{p,q}([0, 1]^d) \) is defined by \(\|f\|_{B^{\bm{\beta}}_{p,q}} := \|f\|_{p} + |f|_{B^\beta_{p,q}},\)
    and
    \(B^{\bm{\beta}}_{p,q}([0, 1]^d) = \{ f \in L^p([0, 1]^d) \mid \|f\|_{B^{\bm{\beta}}_{p,q}} < \infty \}\).
\end{Def}
In this section, the function class underlying the true conditional class probabilities \(\boldsymbol{\eta}\) is given by
\begin{equation}
    \mathcal{G}_{\textrm{Besov}}^\prime(p, q, \bm{\beta}, K) \coloneqq \{ \boldsymbol{\eta} = \pr{\eta_1, \dots , \eta_K}^\top : [0, 1]^d \to \mathcal{S}^K \mid \eta_k \in U(B^{\bm{\beta}}_{p,q}([0, 1]^d)), k \in [K] \},
\end{equation}
where \(U(B^{\bm{\beta}}_{p,q}([0, 1]^d))\) be the unit ball of \(B^{\bm{\beta}}_{p,q}([0, 1]^d)\).
Having defined the anisotropic Besov space, we now show the convergence rate of the maximum likelihood estimator using deep learning. The proof is almost identical to that of Theorem 4.2, utilizing Corollary 3.2 in the main paper and Proposition 2 of \cite{Suzuki&Nitanda2021} for evaluating approximation error.
In this section, we consider sparse DNNs as a model.
Let $\lVert W_j \rVert_0$ and $\lvert \bm{v}_j \rvert_0$ denote the number of non-zero components of $W_j$ and the number of non-zero components of $\bm{v}_j$, respectively. 
A class of $s$-sparse networks is defined as
\begin{equation*}
    \mathcal{F}_{\boldsymbol{\psi}}(L, \bm{m}, B, s) = \left\{f \in \mathcal{F}_{\boldsymbol{\psi}}(L, \bm{m}, B) : \sum_{j=0}^L \left(\lVert W_j \rVert_0 + \lvert \bm{v}_j \rvert_0\right) \leq s\right\}.
\end{equation*}
The results for the case where a non-sparse network is used are left for future work.

Next, we present several lemmas concerning the approximation error.
\begin{lemma} [Proposition 2 of \cite{Suzuki&Nitanda2021} with \(r = \infty\)] \label{lem: approximation error for anisotropic Besov}
    Suppose that \( 0 < p, q \leq \infty \) and \( \bm{\beta} \in \mathbb{R}^{d}_{+} \) satisfy the following condition: \( \tilde{\bm{\beta}} > 1 / p \). Assume that \( l \in \mathbb{N} \) satisfies \( 0 < \bar{\bm{\beta}} < \min(l, l - 1 + 1 / p) \). Let \( \delta = 1 / p \), \( \nu = (\tilde{\bm{\beta}} - \delta) / 2\delta \), and \( W_0 := 6d l(l + 2) + 2d \). Then, for any function \(f \in U(B^{\bm{\beta}}_{p,q}([0, 1]^d))\) and \( N \in \mathbb{N} \), there exists a neural network \(\tilde{f} \in \mathcal{F}_{\textup{id}}(L, \bm{m}, s, B)\), with
    \begin{align*}
        &\text{\rm (i) }\; L = 3 + 2 \log_2\left(\frac{3^{(d \vee l)}}{\epsilon c(d,l)}\right) + 5 \log_2(d \vee l),&
        &\text{\rm (ii) }\; \bm{m} = (1, NW_0, \dots , NW_0, 1), \\
        &\text{\rm (iii) }\; s = ((L - 1)W_0^2 + 1)N,&
        &\text{\rm (iv) }\; B \asymp N^{d(1 + \nu^{-1})(1 / p - \tilde{\bm{\beta}})_+},
    \end{align*}
    such that
    \begin{equation*}
        \lVert \tilde{f} - f \rVert_\infty \leq N^{-\tilde{\bm{\beta}}}.
    \end{equation*}
    Here, \( \epsilon = N^{-\tilde{\bm{\beta}}} \log(N)^{-1} \) and a constant \( c(d, l) \) depending only on \( d \) and \( l \).
\end{lemma}

The proof of Lemma \ref{lem:approximation error for anisotropic Besov probability} is almost the same as that of Lemma A.9 in the main paper, but a detailed proof is provided for completeness.
\begin{lemma}\label{lem:approximation error for anisotropic Besov probability}
    For any function \(\bm{f} \in \mathcal{G}_{\textrm{Besov}}^\prime(p, q, \bm{\beta}, K)\) and any $N \in \Nb$ such that \(N \geq 5^{1 / \tilde{\bm{\beta}}} K^{1 / \tilde{\bm{\beta}}}\), there exists a neural network \(\Tilde{\bm{p}} \in \mathcal{F}_{\boldsymbol{\Psi}}(L, \bm{m}, s, B)\) with
        \begin{align*}
    \text{\rm (i) }\;
        &L = \lfloor 40\tilde{\bm{\beta}}(\tilde{\bm{\beta}} + 2)^2 \min_{i=0, \dots , r}\log_2 (\tilde{\bm{\beta}}) \rfloor 4 + 2 \log_2\left(\frac{3^{(d \vee l)}}{\epsilon c(d,l)}\right) + 5 \log_2(d \vee l),\\
    \text{\rm (ii) }\;
        &\bm{m} = \pr{d, KNW_0^\prime, \dots , KNW_0^\prime, K}, \quad
    \text{\rm (iii) }\;
        s \leq (4284(\tilde{\bm{\beta}} + 2)^5 2^{\tilde{\bm{\beta}}} \log_2(N) + (L^\prime - 1)W_0^2 + 1)KN,\\
    \text{\rm (iv) }\;
        &B \asymp N^{d(1 + \nu^{-1})(1 / p - \tilde{\bm{\beta}})_+},
   \end{align*}
    such that,
    \begin{equation*}
        \Verts{\Tilde{\bm{p}} - \bm{f}}_\infty \leq 5KN^{-\tilde{\bm{\beta}}},
    \end{equation*}
    and
    \begin{equation*}
        \Tilde{p}_k(\bm{x}) \geq \max_{i=0, \dots , r}N^{-\tilde{\bm{\beta}}},\; \forall k \in [K],\; \forall \bm{x} \in [0, 1]^{d}.
    \end{equation*}
    Here, \(W_0^\prime \coloneqq 48dl^2(l + 2)2^l + 2d\), \(L^\prime\) is the same \(L\) as in Lemma \ref{lem: approximation error for anisotropic Besov}, and the other symbols are defined in the same manner as in Lemma \ref{lem: approximation error for anisotropic Besov}.
\end{lemma}
\begin{proof}
    Combining Lemma \ref{lem:approximation error for exp} with \(\beta = \tilde{\bm{\beta}}\) and Theorem \ref{lem: approximation error for anisotropic Besov}, we compose the neural network created by them as \(\bm{G} = (G(\tilde{f}_1), \dots , G(\tilde{f}_K))\). For each \(k = 1, \dots , K\), we have
    \begin{equation*}
        \Verts{e^{G(\tilde{f}_k)} - f_k}_\infty = \Verts{e^{G(\tilde{f}_k)} - \tilde{f}_k}_\infty + \Verts{\tilde{f}_k - f_k}_\infty \leq 5 N^{-\tilde{\bm{\beta}}}.
    \end{equation*}
    Here, since \(N \geq 5^{1 / \tilde{\bm{\beta}}} K^{1 / \tilde{\bm{\beta}}} \geq 5^{1 / \tilde{\bm{\beta}}}\), we use \(M = N^{\tilde{\bm{\beta}}}\) in Lemma \ref{lem:approximation error for exp}.
    Now, we define the vector-valued function \(\Tilde{\bm{p}}\) element-wise as
    \begin{equation*}
        \Tilde{p}_k(\bm{x}) = \frac{e^{G(\tilde{f}_k(\bm{x}))}}{\sum_{j=1}^K e^{G(\tilde{f}_j(\bm{x}))}},\quad k = 1, \dots , K.
    \end{equation*}
    Applying the composition, depth synchronization, and parallelization property, we establish that \(\Tilde{\bm{p}} \in \mathcal{F}_{\boldsymbol{\Psi}}(L, \bm{m}, s, B)\). Furthermore, using the triangle inequality and exploiting the fact that \(\bm{f}\) is a probability vector, we obtain
    \begin{align}
        \Verts{\Tilde{p}_k - f_k}_\infty &\leq \Verts{e^{G(\tilde{f}_k)}\pr{\frac{1}{\sum_{j=1}^K e^{G(\tilde{f}_j)}} - 1}}_\infty + \Verts{e^{G(\tilde{f}_k)} - f_k}_\infty \nonumber \\
        &= \Verts{e^{G(\tilde{f}_k)}\pr{\frac{\sum_{l=1}^K f_l}{\sum_{j=1}^K e^{G(\tilde{f}_j)}} - \frac{\sum_{l=1}^K e^{G(\tilde{f}_l)}}{\sum_{j=1}^K e^{G(\tilde{f}_j)}}}}_\infty + \Verts{e^{G(\tilde{f}_k)} - f_k}_\infty \nonumber \\
        &\leq \pr{\sum_{l=1}^K \Verts{f_l - e^{G(\tilde{f}_l)}}_\infty}\Verts{\frac{e^{G(\tilde{f}_k)}}{\sum_{j=1}^K e^{G(\tilde{f}_j)}}}_\infty + \Verts{e^{G(\tilde{f}_k)} - f_k}_\infty \nonumber \\
        &\leq 5(K+1) N^{-\tilde{\bm{\beta}}}. \label{eq:appriximation error for composition structured probability for Besov}
    \end{align}
    Moreover, using the second inequality of Lemma \ref{lem:approximation error for exp} and the first bound of the lemma, we have
    \begin{align*}
        \Tilde{p}_k(\bm{x}) &\geq \frac{4 N^{-\tilde{\bm{\beta}}}}{\sum_{j=1}^K e^{G(H_j)}} \geq \frac{4N^{-\tilde{\bm{\beta}}}}{1 + 5K N^{-\tilde{\bm{\beta}}}} = \frac{4}{N^{\tilde{\bm{\beta}}} + 5K} \geq N^{-\tilde{\bm{\beta}}}.
    \end{align*}
\end{proof}

The following theorem demonstrates the convergence rate of the maximum likelihood estimator using deep learning when the true conditional probability belongs to \(\mathcal{G}_{\textrm{Besov}}^\prime(p, q, \bm{\beta}, K)\).
The proof of this theorem is almost the same as the proof of Theorem 4.1 in the main paper, but a detailed proof is provided for completeness.

\begin{theorem} \label{thm:convergence rates for Besov}
    Consider the K-class classification model
    \begin{equation*}
        Y_k \mid \bm{X} = \bm{x} \sim \text{Bernoulli}(\eta_k(\bm{x})), \quad \bm{X} \sim P_{\bm{X}}, \quad k = 1, \dots , K, 
    \end{equation*}
    for the true conditional class probabilities $\boldsymbol{\eta}$ in the class $\mathcal{G}_{\textrm{Besov}}^\prime(p, q, \bm{\beta}, K)$. Let $\hat{\bm{p}}_n$ be NPMLE with $\mathcal{F}_n = \mathcal{F}_{\boldsymbol{\Psi}}(L, \bm{m}, B, s)$ satisfying the following conditions:
    \[
    \text{\rm (i) }\; L \asymp \log (n),\quad
    \text{\rm (ii) }\; n\phi_n \asymp \min_{i=1, \dots , L}m_i,\quad
    \text{\rm (iii) }\; s \asymp n\phi_n \log(n),\quad
    \text{\rm (iv)}\; B \asymp n^{\frac{d(1 + \nu^{-1})(1 / p - \tilde{\bm{\beta}})_+}{\tilde{\bm{\beta}} + 1}}.
    \]
    Then, there exists a constant $C$ depending only on $p, \bm{\beta}, K$ such that
    \begin{equation*}
        \mathbb{E}_{\mathcal{D}_n}\sqbr{R(\hat{\bm{p}}_n, \boldsymbol{\eta})} \leq C\phi_n\log(n)^3
    \end{equation*}
    for sufficient large \(n\), where \(\phi_n \coloneqq n^{- 1 / (\tilde{\bm{\beta}} + 1)}\)
\end{theorem}

\begin{proof}[Proof of Theorem \ref{thm:convergence rates for Besov}]
    Set \(\mathcal{F}_n = \mathcal{F}_{\boldsymbol{\Psi}}\pr{L, \bm{m}, B, s}\). Also, for a sufficiently small constant \(c^\prime\), define
    \begin{equation*}
        N = \lceil c^\prime K^{\frac{1}{\tilde{\bm{\beta}} + 1}}n^{\frac{1}{\tilde{\bm{\beta}} + 1}} \rceil, \;
        \phi_n^\prime \coloneqq K^{\frac{2\tilde{\bm{\beta}} + 1}{\tilde{\bm{\beta}} + 1}}n^{-\frac{1}{\tilde{\bm{\beta}} + 1}},
    \end{equation*}
    where \(\phi_n^\prime\) represents the convergence rate with the number of classes \(K\).
    Since the Hellinger distance is bounded, the convergence rate for \(\phi_n^\prime \geq 1\) is evident. Thus, consider the case \(\phi_n^\prime \leq 1\). From \(\phi_n^\prime \leq 1\), we have \(K^{t_i / 2\beta_i^*} \leq n^{(1 + \alpha)t_i / 2(2(1 + \alpha)\beta_i^* + 3t_i)} \leq n^{t_i / 2((1 + \alpha)\beta_i^* + t_i)}\) for all \(i = 0, \dots , r\).
    Thus, for sufficiently large \(n\), the assumption of Lemma A.9 in the main paper holds. Therefore, take \(\Tilde{\bm{p}}_n\) as \(\Tilde{\bm{p}}\) in Lemma A.9 in the main paper.

    For any \(\delta > 0\), \(\bar{\mathcal{F}}_n^{1/2}(\delta, \Tilde{\bm{p}}_n) \subset \bar{\mathcal{F}}_n^{1/2}(\Tilde{\bm{p}}_n) \coloneqq \bar{\mathcal{F}}_n^{1/2}(\infty, \Tilde{\bm{p}}_n)\), thus for any \(u>0\),
    \begin{equation*}
        N_{2, B}\pr{u, \bar{\mathcal{F}}_n^{1/2}\pr{\delta, \Tilde{\bm{p}}_n}, \mu} \leq N_{2, B}\pr{u, \bar{\mathcal{F}}_n^{1/2}\pr{\Tilde{\bm{p}}_n}, \mu}.
    \end{equation*}
    Next, we show that for any \(u>0,\; N_{2, B}(u, \bar{\mathcal{F}}_n^{1/2}\pr{\Tilde{\bm{p}}_n}, \mu) \leq N_{2, B}(2N^{-\tilde{\bm{\beta}} / 2}u, \mathcal{F}_n, \mu).\)
    Let \(\{(\bm{p}_i^U, \bm{p}_i^L)\}_{i=1}^M\) be the smallest \(2N^{-\tilde{\bm{\beta}} / 2}u\)-bracketing set of \(\mathcal{F}_n\) for \(L^2(\mu)\) norm, i.e., \(M = N_{2, B}(2N^{-\tilde{\bm{\beta}} / 2}u, \mathcal{F}_n, \mu)\) and for any \(\bm{p} \in \mathcal{F}_n\), there exists \(j \in [M]\) such that \(0 \leq \bm{p}_j^L \leq \bm{p} \leq \bm{p}_j^U\) and \(\int (\bm{p}_j^U - \bm{p}_j^L)^2 d\mu \leq 4N^{-\tilde{\bm{\beta}}}u^2\). Then, for any \(\bm{p} \in \mathcal{F}_n\), there exists \(j \in [M]\) such that \(\sqrt{(\bm{p}_j^L + \Tilde{\bm{p}}_n)/2} \leq \sqrt{(\bm{p} + \Tilde{\bm{p}}_n)/2} \leq \sqrt{(\bm{p}_j^U + \Tilde{\bm{p}}_n)/2}\), and
    \begin{align*}
        \int \left(\sqrt{\frac{\bm{p}_j^U + \Tilde{\bm{p}}_n}{2}} - \sqrt{\frac{\bm{p}_j^L + \Tilde{\bm{p}}_n}{2}}\right)^2 d\mu
        \leq \frac{1}{4N^{-\tilde{\bm{\beta}}}} \int_\mathcal{X} \sum_{k=1}^K \pr{p_{j, k}^U - p_{j, k}^L}^2 dP_{\bm{X}}(\bm{x})
        \leq u^2.
    \end{align*}
    Thus, the statement is proven.

    From the above, for any \(u>0\), we obtain
    \[
    N_{2, B}(u, \bar{\mathcal{F}}_n^{1/2}\pr{\delta, \Tilde{\bm{p}}_n}, \mu) \leq N_{2, B}(2N^{-\tilde{\bm{\beta}} / 2}u, \mathcal{F}_n, \mu).
    \]

    Additionally, for any \(\bm{p}_1, \bm{p}_2 \in \mathcal{F}_{\boldsymbol{\Psi}}(L, \bm{m}, B, s)\), there exist \(\bm{f}_1, \bm{f}_2 \in \mathcal{F}_{\mathrm{id}}(L, \bm{m}, B, s)\) such that
    \begin{align*}
        &\Verts{\bm{p}_1 - \bm{p}_2}_\infty = \Verts{\boldsymbol{\Psi}\pr{\bm{f}_1} - \boldsymbol{\Psi}\pr{\bm{f}_2}}_\infty
        = \Verts{\max_{k = 1, \dots , K} \verts{\frac{e^{f_{1, k}(\bm{x})}}{\sum_{l=1, \dots , K} e^{f_{1, l}(\bm{x})}} - \frac{e^{f_{2, k}(\bm{x})}}{\sum_{l=1, \dots , K} e^{f_{2, l}(\bm{x})}}}}_{\infty} \\
        &\leq \Verts{\max_{k = 1, \dots , K} \frac{\sum_{j \neq k} (e^{f_{1, k}(\bm{x}) + f_{2, j}(\bm{x})} + e^{f_{1, j}(\bm{x}) + f_{2, k}(\bm{x})})\verts{f_{1, k}(\bm{x}) + f_{2, j}(\bm{x}) - f_{1, j}(\bm{x}) - f_{2, k}(\bm{x})}}{\pr{\sum_{l=1, \dots , K} e^{f_{1, l}(\bm{x})}}\pr{\sum_{m=1, \dots , K} e^{f_{2, m}(\bm{x})}}}}_{\infty} \\
        &\leq \Verts{\max_{k = 1, \dots , K} \pr{(K-1)\verts{f_{1, k}(\bm{x}) - f_{2, k}(\bm{x})} + \sum_{j \neq k}\verts{f_{1, j}(\bm{x}) - f_{2, j}(\bm{x})}}}_{\infty} \\
        &\leq \Verts{2(K - 1)\max_{k = 1, \dots , K} \verts{f_{1, k}(\bm{x}) - f_{2, k}(\bm{x})}}_{\infty} = 2(K - 1)\Verts{\bm{f}_1 - \bm{f}_2}_\infty.
    \end{align*}
    Here, for the first inequality, we use the fact that  \(\verts{e^x - e^y} \leq (e^x + e^y)\verts{x - y}\) for all $x,y \in \mathbb{R}$. Therefore, by combining Lemma A.6 in the main paper and Lemma A.7 in the main paper, we have
    \begin{align*}
        \log (N_{2, B}(2N^{-\tilde{\bm{\beta}} / 2}u, \mathcal{F}_n, \mu))
        &\leq \log \left(N\left(\frac{N^{-\tilde{\bm{\beta}} / 2}}{\sqrt{K}}u, \mathcal{F}_n, \Verts{\; \cdot \;}_\infty\right)\right) \\
        &\leq \log\pr{N\pr{\frac{N^{-\tilde{\bm{\beta}} / 2}u}{2(K - 1)\sqrt{K}}, \mathcal{F}_{\mathrm{id}}(L, \bm{m}, B, s), \Verts{\; \cdot \;}_\infty}} \\
        &\leq 2sL\log\pr{\frac{2(K-1)\sqrt{K}(B \lor 1)(\lvert \bm{m} \rvert_\infty + 1)L}{N^{-\tilde{\bm{\beta}} / 2}}u^{-1}}.
    \end{align*}
    Therefore, let \(A \coloneqq 2(K-1)\sqrt{K}(B \lor 1)(\lvert \bm{m} \rvert_\infty + 1)L / N^{-\tilde{\bm{\beta}} / 2}\), we have
    \begin{align*}
        &\int_{\delta^2/(2^{13}c_0)}^\delta \sqrt{\log N_{2, B}\pr{u, \Bar{\mathcal{F}}_n^{1/2}(\Tilde{\bm{p}}_n, \delta), \mu}} \; du \\
        &\leq \sqrt{2sL}\int_0^\delta\sqrt{\log\left(\frac{A}{u}\right)} du
        = \sqrt{2sL}\int_{\sqrt{\log(A/\delta)}}^\infty 2Ax^2e^{-x^2} dx \quad \left(\sqrt{\log(A/u)} = x\right) \\
        &= A\sqrt{2sL}\int_{\sqrt{\log(A/\delta)}}^\infty \left(-e^{-x^2}\right)^\prime x dx 
        = A\sqrt{2sL}\left\{\sqrt{\log(A/\delta)}\frac{\delta}{A} + \sqrt{\pi}\frac{1}{\sqrt{\pi}}\int_{\sqrt{\log(A/\delta)}}^\infty e^{-x^2} dx\right\} \\
        &= A\sqrt{2sL}\left\{\sqrt{\log(A/\delta)}\frac{\delta}{A} + \frac{\sqrt{\pi}}{2}\mathbb{P}\left(\lvert Z \rvert \geq \sqrt{2\log(A/\delta)}\frac{1}{\sqrt{2}}\right)\right\} \quad (Z \sim N(0, 1 / 2)) \\
        &\leq A\sqrt{2sL}\left\{\sqrt{\log(A/\delta)}\frac{\delta}{A} + \frac{\sqrt{\pi}}{2}\frac{2}{\exp\left(\log(A/\delta)\right)}\right\} \quad (\because \text{Gaussian tail bound}) \\
        &= \delta\sqrt{2sL}\left(\sqrt{\log(A/\delta)} + \sqrt{\pi}\right).
    \end{align*}
    Using the triangle inequality and \eqref{eq:appriximation error for composition structured probability for Besov}, for any \(k \in [K]\) and any \(\bm{x} \in \mathcal{X}\), we have
    \begin{equation*}
        \eta_k(\bm{x}) - \Tilde{p}_{n, k}(\bm{x}) \leq 5(K + 1)N^{-\tilde{\bm{\beta}}}.
    \end{equation*}
    Since \(\Tilde{p}_{n, k}(\bm{x}) \geq N^{-\tilde{\bm{\beta}}}\), dividing both sides by \(\Tilde{p}_{n, k}(\bm{x})\) yields
    \begin{equation*}
        \frac{\eta_k(\bm{x})}{\Tilde{p}_{n, k}(\bm{x})} \leq 5K + 6.
    \end{equation*}
    Therefore, Assumption 2.1 in the main paper is satisfied, allowing the application of Corollary 3.2 in the main paper.
    
    Setting \(\Psi(\delta) = \delta\sqrt{2sL}(\sqrt{\log(A / \delta)} + \sqrt{\pi})\) and \(c = 1\) in Corollary 3.2 in the main paper, and choosing \(\delta_n = \sqrt{2sL}(\sqrt{\log(\sqrt{n}A)} + \sqrt{2\pi}) / \sqrt{n}\) to satisfy (8) in the main paper, we obtain
    \begin{equation}
        \mathbb{E}\pr{R\pr{\hat{\bm{p}}_n, \boldsymbol{\eta}}} \leq 514(1 + c_0^2)\pr{\frac{(2sL)(\sqrt{\log(nA)} + \sqrt{2\pi})^2}{n} + R\pr{\Tilde{\bm{p}}_n, \boldsymbol{\eta}}} + \frac{1}{n}. \label{eq:variance bound for Besov}
    \end{equation}
    From the choice of \(N\) and depth synchronization property, \(\Tilde{\bm{p}}_n\) belongs to \(\mathcal{F}_{\boldsymbol{\Psi}}(L, \bm{m}, s, B)\) with \(L, \bm{m}, s, B\) in the statement of this theorem.
    Further, the bound of Lemma \ref{lem:approximation error for anisotropic Besov probability} implies
    \begin{align*}
        R\pr{\Tilde{\bm{p}}_n, \boldsymbol{\eta}} 
        &= \mathbb{E}_{\bm{X}}\sqbr{H^2\pr{\Tilde{\bm{p}}_n, \boldsymbol{\eta}}} 
        = \mathbb{E}_{\bm{X}}\sqbr{\frac{1}{2}\sum_{k=1}^K \pr{\sqrt{\Tilde{p}_k(\bm{X})} - \sqrt{\eta_k(\bm{X})}}^2}\\
        &\leq \mathbb{E}_{\bm{X}}\sqbr{\frac{1}{2}\sum_{k=1}^K \verts{\Tilde{p}_k(\bm{X}) - \eta_k(\bm{X})}}
        \leq 5K^2 N^{-\tilde{\bm{\beta}}} \lesssim \phi_n^\prime.
    \end{align*}
    Combining this with \eqref{eq:variance bound for Besov} and \(s \asymp N\log(N) \asymp n^{1 / (\tilde{\bm{\beta}} + 1)} \log(n)\), Theorem \ref{thm:convergence rates for Besov} follows.
\end{proof}

\end{appendix}

\section*{Acknowledgments}
This research was supported by 
JST BOOST (JPMJBS2402 to AY), 
JSPS KAKENHI Grant (JP20K19756, JP20H00601, 23K28045, 24K14855 to YT),
the MEXT Project for Seismology Toward Research Innovation with Data of Earthquakes (STAR-E, JPJ010217 to YT), and iGCORE Open Collaborative Research (to YT).
\bibliographystyle{plainnat}  
\bibliography{bibliography}  

\end{document}